\def\div{\mathop{\rm div}\nolimits}
\def\uOdt{u_{\mathcal{O},\delta t}}
\def\wOdt{w_{\mathcal{O},\delta t}}
\newtheorem{thm}{Theorem}[section]
\newtheorem{lem}[thm]{Lemma}
\newtheorem{prop}[thm]{Proposition}
\theoremstyle{definition}
\newtheorem{defn}[thm]{Definition}
\theoremstyle{remark}
\newtheorem{rem}[thm]{Remark}
\numberwithin{equation}{section}
\begin{document}
\Newassociation{prooftheorem}{demoprop}{ann}
\renewenvironment{prooftheorem}[1]{\begin{trivlist}\item
\itshape{Proof of Theorem #1}}{\end{trivlist}}
%-------------------------------------------------------------------------
% editorial commands: to be inserted by the editorial office
%
%\firstpage{1} \volume{228} \Copyrightyear{2004} \DOI{003-0001}
%
%
%\seriesextra{Just an add-on}
%\seriesextraline{This is the Concrete Title of this Book\br H.E. R and S.T.C. W, Eds.}
%
% for journalst
%
%\firstpage{1}
%\issuenumber{1}
%\Volumeandyear{1 (2004)}
%\Copyrightyear{2004}
%\DOI{003-xxxx-y}
%\Signet
%\commby{inhouse}
%\submitted{March 14, 2003}
%\received{March 16, 2000}
%\revised{June 1, 2000}
%\accepted{July 22, 2000}
%
%
%
%---------------------------------------------------------------------------
%Insert here the title, affiliations and abstract:
%
\title[Zero-flux boundary condition and its approximations]
{Degenerate parabolic equation with  zero flux boundary condition and its approximations}
\thanks{This work began during a brief stay in LATP (Laboratoire d'Analyse Topologie, Probabilit\'e) at Marseille. The author thanks the members of Laboratory for the warm welcomes. The work on this paper has been supported by the French ANR project CoToCoLa.}
%\TeX-pert.}
%----------Author 1
\author[Gazibo M]{Mohamed Karimou Gazibo}
\address{Laboratoire de Math\'{e}matiques\\
CNRS : UMR 6623- Universit\'{e} de Franche-Comte\\
16, route de Gray\\
25030 Besancon
France}
\email{mgazibok@univ-fcomte.fr}
%----------classification, keywords, date
\subjclass{Primary 65N08; Secondary 47H06}
\keywords{Hyperbolic-parabolic equation, Finite volume scheme; Zero-flux \\boundary condition; Convergence, Boundary regularity, Entropy solution; Nonlinear semigroup theory; Mild solution; Integral-process solution.}
\date{June 15, 2013}
%----------additions
%\dedicatory{I dedicace this work to my advisor Boris Andreianov}
%%% ----------------------------------------------------------------------
\begin{abstract}
We study a degenerate parabolic-hyperbolic equation with zero flux boundary condition.  The aim of this paper is to prove convergence of numerical approximate solutions towards the unique entropy solution. We propose an implicit finite volume scheme on admissible mesh.% and present some numerical experiments. 
We establish fundamental estimates and prove that the approximate solution converge towards an entropy-process solution. Contrarily to the case of Dirichlet conditions, in zero-flux problem unnatural boundary regularity of the flux is required  to establish that entropy-process solution is the unique entropy solution. In the study of well-posedness of the problem, tools of nonlinear semigroup theory (stationary, mild and integral solutions) were used in [Andreianov, Gazibo, ZAMP, 2013] in order to overcome this difficulty. Indeed, in some situations including the one-dimensional setting, solutions of the stationary problem enjoy additional boundary regularity. Here, similar arguments are developed based on the new notion of integral-process solution that we introduce for this purpose.
\end{abstract}
%%% ----------------------------------------------------------------------
\maketitle
%%% ----------------------------------------------------------------------
%\tab\elleofcontents
%%%%%%%%%%%%%%%%%%%%%%%%%%%%%%%%%%%%%%%%%%%%%%%%%%%%%%%%%%%%%%%%%%%%%%%%%%%%%%%%%%%%%%%%%%%%%%%%%%%%%%%%%%%%%%%%%%%%%%%%%%%%%%%%%%%%%%%%%%%%%%%%%%%%%%%%%%%%%%%%%
\section{Introduction}
Let $\Omega$ be a bounded open set of $\mathbb R^\ell$, $\ell\geq1$, with a Lipschitz boundary $\partial\Omega$ and $\eta$\\ the unit normal to $\partial\Omega$ outward to $\Omega$. We consider the zero-flux boundary problem 
\begin{equation*}
(P)\left\{\begin{array}{rll}
u_t+\div f(u)-\Delta\phi(u)&=0         &\mbox{ in } \;\;\; Q=(0,T)\times\Omega,\\
                     u(0,x)&=u_0(x)   &\mbox{ in } \;\;\; \Omega,\\
(f(u)-\nabla\phi(u)).\eta&=0           &\mbox{ on } \;\;\; \Sigma=(0,T)\times\partial\Omega.
\end{array}\right.
\end{equation*}
The function $f$ is continuous and satisfy:
\begin{equation}\label{f}\tag{H1}
f(0)=f(u_{\max})=0 \mbox{ for some  } u_{\max}>0.
\end{equation}
We suppose that the initial data $u_0$ takes values in $[0,u_{\max}]$. In this case $[0,u_{\max}]$ will be an invariant domain for the solution of $(P)$ (see \cite{BG}). The function $\phi$ is  non decreasing Lipschitz continuous in $[0,u_{\max}]$. Formally $\Delta(\phi(u))=\div(\phi'(u)\nabla u)$. Then, if $\phi'(u)=0$ for some $(t,x)\in Q$, the diffusion term vanishes so that $(P)$ is a degenerate parabolic-hyperbolic problem. In our context, we suppose as in \cite{BG} that there exists a real value $u_c$  with $0\leq u_c\leq u_{\max}$ such that  for $u\leq u_c$, the problem $(P)$ is hyperbolic. This means that $\phi\equiv0$ on $[0,u_c]$ and $\phi$ is strictly increasing in $[u_c,u_{\max}]$. Also as in \cite{BG}, we assume that the couple $(f,\phi)$ is non-degenerate, this means that for all $\xi\in\mathbb R^{\ell}$, $\xi\neq 0$, the functions $\lambda\longmapsto\displaystyle\sum_{i=1}^\ell\xi_if_i(\lambda)$ is not affine on the non-degenerate sub intervals of $[0, u_c]$. It is well know that uniqueness of weak solution of degenerate hyperbolic-parabolic problem is not ensured, and one has to define a notion of entropy solution in the sense of Carrillo \cite{CAR} (see in the strictly hyperbolic case Kruzhkov \cite{KRU}) to recover uniqueness. 
Inspired by \cite{BFK1}, we defined in \cite{BG}, a suitable notion of entropy solution for $(P)$. A measurable function $u$ taking values on $[0,u_{\max}]$ is called an entropy solution of the initial-boundary value  problem $(P)$ if $\phi(u)\in L^2(0,T;H^1(\Omega))$ and $\forall k\in  [0,u_{\max}]$,  $\forall\xi\in \mathcal{C}^\infty([0,T)\times\mathbb R^\ell)^+$, the following inequality   hold 
\begin{align}\label{ESzeroflux}
&\displaystyle\int_0^T\!\!\!\int_\Omega\displaystyle \left\{|u-k|\xi_t+sign(u-k)\Bigl[f(u)-f(k)-\nabla\phi(u)\Bigr].\nabla\xi\displaystyle\right\}dxdt\nonumber\\&+\displaystyle\int_0^T\!\!\!\int_{\partial\Omega} \left|f(k).\eta(x)\right|\xi(t,x) d{\mathcal{H}}^{\ell-1}(x)dt+\displaystyle\int_\Omega |u_0-k|\xi(0,x)dx\geq 0.
\end{align}
Let us recall the main theoretical results on problem $(P)$ obtained in \cite{BG}.  We prove existence of solution satisfying \eqref{ESzeroflux}, for any space dimension in the case $0<u_c<u_{\max}$. Uniqueness is obtained for one space dimension. Remark that uniqueness is also true in multi-dimensional situation in two extreme cases: $u_c=0$ (non-degenerate parabolic case, see \cite{BF}) and $u_c=u_{\max}$ (pure hyperbolic case, (see \cite{BFK1})). We refer to Appendix 2 for some explanations.\\
In this paper, we choose an implicit finite volume scheme for the discretization of the parabolic equation $(P)$. Under suitable assumptions on the numerical fluxes, it is shown that the considered schemes are $L^\infty$ stable and the discrete solutions satisfy  some weak BV inequality and $H^1$ estimates.  We prove also  space and time translation estimates on the diffusion fluxes, which are the keys to the proof of convergence of the scheme. We prove existence of discrete solution by using Leray-Schauder topological degree. The approximate solutions are shown to satisfy the appropriate discrete entropy inequalities. Using the weak BV and $H^1$ estimates, the approximate solutions are also shown to satisfy  continuous entropy inequalities. It remains to prove that the sequence of approximate solutions satisfying this continuous entropy inequalities converge towards an unique entropy solution.  In \cite{VOV},  Michel and Vovelle use the concept of 'entropy-process solution' introduced by  Gallou\"{e}t and al (see e.g. \cite{EGHMichel, VOV, Claire}) for Dirichlet boundary problem which is similar to the notion of measure valued solutions of Diperna \cite{Diperna}. They proved that approximate solutions converge towards an entropy-process solution as the mesh size tends to zero. Using doubling of variables method, they showed that the entropy-process solution is unique and is also a entropy solution of Dirichlet problem. In the case of zero flux boundary condition, some difficulty due to lack of regularity for the boundary flux appears (see \cite{BG}). We are not able to obtain uniqueness by the  doubling of variables method. Thus, the only notion of entropy-process solution is not enough to prove convergence towards the entropy solution. To solve this difficulty,  we found it useful to consider the general evolution problem of the form: 
\begin{align*}
(E)\left\{\begin{array}{rll}
v'(t)+A(v(t))&=0         &\mbox{ on } \;\;\; (0,T);\\
                    v(0)&=u_0.
\end{array}\right.
\end{align*}
 We  propose a new notion of solution called integral-process solution for the abstract evolution problem $(E)$. This notion is presented in detail in the appendix $1$. We prove that this new notion of integral-process solution coincides with the unique integral solution. Then, we apply this notion to the problem $(P_1)$ and prove that the approximate solutions converge to an integral-process solution.  Then we conclude  that it is an entropy solution.\\% Notice that, in the context of  pure hyperbolic problem, this notion of integral-process solution is not necessary. The strong trace of the boundary flux is proved by regularity of $f$ and the result of Vasseur (see also \cite{BFK1}). This permit to prove as in the case of Dirichlet problem that entropy process solution is entropy solution.\\
 
The rest of this paper is organized as follows. In Section 2, we present our implicit scheme. In Section 3, we prove a priori estimates, the discrete entropy inequalities and existence of discrete solution in Section 4. We propose in Section 5 a continuous entropy inequality, and the convergence result follows in Section 6. %In section 7, we deal with numerical experimentation.
 Finally, in Appendices we study the abstract evolution equation $(E)$ and prove uniqueness of entropy solution in one space dimension for degenerate parabolic equation.
%%%%%%%%%%%%%%%%%%%%%%%%%%%%%%%%%%%%%%%%%%%%%%%%%%%%%%%%%%%%%%%%%%%%%%%%%%%%%%%%%%%%%%%%%%%%%%%%%%%%%%%%%%%%%%%%%%%%%%%%%%%%%%%%%%%%%%%%%%%%%%%%%%%%%%%%%%%%%%%%%%%%%
\section{Presentation of a finite volume scheme for degenerate parabolic problem with zero flux boundary condition}
\begin{figure}[bht]
\begin{center} 
\scalebox{1.2}% pour redimensionner
{\input{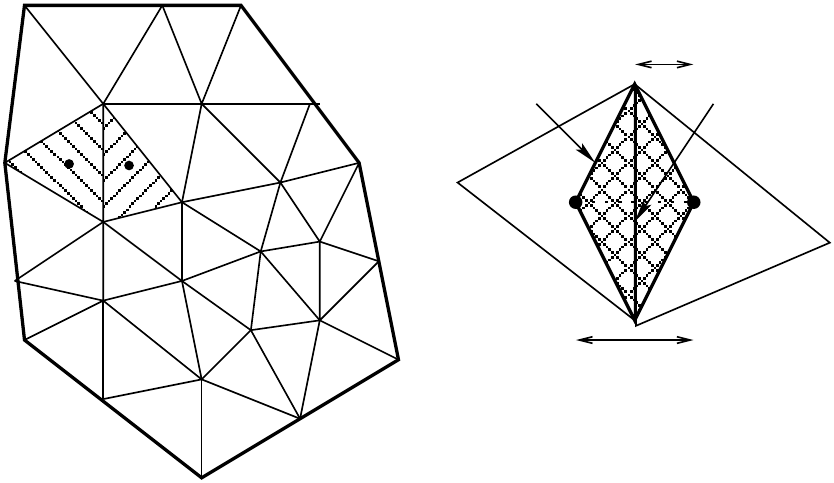_t}}
\end{center}
\caption{Control volumes, centre, diamonds}
\label{fig:volumefini1}
\end{figure}
In this section, we consider the problem $(P_1)$ and construct a monotone finite volume scheme to approximate the solution. Let $\delta t>0$ be  the time step. Let $\mathcal{O}$ be a family of disjoint connected polygonal subsets called control volumes of $\Omega$ such that $\overline{\Omega}$ is the union of the closures of the elements of this family and such that the common interface of two control volumes is included in the hyperplane of $\mathbb R^\ell$. Let $h$ be the upper bound for maximum size of the mesh: $h=\sup\{\mbox{Diam}(K), K\in\mathcal{O}\}$. We suppose that there exists $\alpha>0$ such that:
\begin{align}\label{reg1}
\alpha h^\ell\leq m(K),\;\; m(\partial K)\leq\frac{1}{\alpha}h^{\ell-1},\;\;\;\;\forall K\in\mathcal{O},
\end{align}
then the estimate on  the number $|\mathcal{O}|$ of control volumes is
\begin{align}\label{reg2}
|\mathcal{O}|\leq\frac{m(\Omega)}{\alpha}h^{-\ell},
\end{align}
where $m(K)$ is the $\ell-$ dimensional Lebesgue measure of $K$ and $m(\partial K)$ is the $(\ell-1)-$ dimensional Lebesgue measure of $\partial K$. If $K$ and $L$ are two control volumes having an edge $\sigma$ in common, we say that $L$ is  a neighbor of $K$  and we write   $L\in\mathcal{N}(K)$. We  sometimes denote by $K|L$ the common edge $\sigma$ between $K$ and $L$ and by $n_{K,\sigma}$ the unit normal to $\sigma$, oriented from $K$ to $L$. Moreover, $\bar{\mathlarger\varepsilon}_K$ denotes the set of all edges for any control volumes $K$. If $K$ has at least one common edge with boundary $\partial\Omega$, we denote by $\mathlarger\varepsilon_K^{ext}$ the set of these boundaries edges, that can be regarded as $\mathlarger\varepsilon_K^{ext}=\{\sigma\in\bar{\mathlarger\varepsilon}_K, m(\sigma\cap\partial\Omega)>0\}$. Eventually, if the control volume $K$ has no common edges with a part of  boundary $\partial\Omega$ then $\mathlarger\varepsilon_K^{ext}=\emptyset$.  In all case, for all control volume $K\in\mathcal{O}$, we have $\mathlarger\varepsilon_K=\bar{\mathlarger\varepsilon}_K\backslash\mathlarger\varepsilon_K^{ext}$. Because we consider the zero-flux boundary condition, we don't need to distinguish between interior and exterior control volumes, only inner interfaces between volumes are needed in order to formulate the scheme.
We consider here the admissible mesh of $\Omega$ ( see for e.g. \cite{VOV}), we mean that there exists a family of points $(x_K)_{ K\in\mathcal{O}}$ such that the straight line $\overline{x_Kx_L}$ is orthogonal to the interface $K|L$. We denote by $d_{K,L}=|x_K-x_L|$ the distance between $x_K$ and $x_L$ and by $d_{K,\sigma}$ the distance between $x_K$ and the interface $\sigma$ (see  Figure \ref{fig:volumefini1}). The point $x_K$ is referred as the centre of $K$.  To simplify the analysis, we consider that $x_K\in K$ (in general, this assumption can be relaxed, e.g., one can consider so called Delaunay simplicial meshes). We denote by $\tau_{K,\sigma}$ the 'transmissibility' through $\sigma$ defined by $\tau_{K,\sigma}=\frac{m(\sigma)}{d_{K,\sigma}}$ if $\sigma\in\mathlarger\varepsilon_K^{ext}$, in addition we denote $\tau_{K|L}=\frac{m(K|L)}{d_{K,L}}$. The diamond denoted by $\widehat{K|L}$ is a  convex hull constructed from neighbor centers $x_K$, $x_L$ and $K|L$. The diamonds are disjoint and cover $\Omega$ up to an h-neighborhood of $\partial\Omega$.  Notice that the $\ell-$ dimensional measure $m(\widehat{K|L})$ of $\widehat{K|L}$ equals to  $\frac{d_{K,L}}{\ell}m(K|L)$  (see  Figure \ref{fig:volumefini1}). 

 A discrete function $w$ on the mesh $\mathcal{O}$ is a set $(w_K)_{K\in\mathcal{O}}$. If $w_K$, $v_K$ are discrete functions, the corresponding $L^2(\Omega)$ scalar product and norm can be computed as
\begin{align*}
(w_{\mathcal{O}},v_{\mathcal{O}})_{L^2(\Omega)}=\displaystyle\sum_{K\in\mathcal{O}}m(K)w_Kv_K;\;\;\;||w_{\mathcal{O}}||_{L^2(\Omega)}^2=\displaystyle\sum_{K\in\mathcal{O}}m(K)|w_K|^2.
\end{align*}
In addition, we can define the positive (but not definite) product and the corresponding "discrete $H^1_0$ semi-norm" by
\begin{align}\label{H1seminorm}
(w_{\mathcal{O}},v_{\mathcal{O}})_{H_\mathcal{O}}=\ell\displaystyle\sum_{K\in\mathcal{O}}\sum_{L\in\mathcal{N}(K)}\frac{m(K|L)}{d_{K,L}}(w_L-w_K)(v_L-v_K);\;\;\;|w_{\mathcal{O}}|_{H_\mathcal{O}}^2=\biggl((w_{\mathcal{O}},w_{\mathcal{O}})_{H_\mathcal{O}}\biggr).
\end{align}
We define the discrete gradient $\nabla_{\mathcal{O}} w_{\mathcal{O}}$ of a constant per control volume function $w_{\mathcal{O}}$ as the constant per diamond $\widehat{K|L}$, $\mathbb R^\ell$-valued function with values
 \begin{align}\label{gradiantdiscret}
(\nabla_{\mathcal{O}} w_{\mathcal{O}})_{\widehat{K|L}}=\nabla_{\widehat{K|L}} w_{\mathcal{O}}:=\ell\frac{w_L-w_K}{d_{K,L}}\eta_{K,L}.
\end{align} 
 For the approximation of the convective term, we consider the numerical convection fluxes $F_{K,\sigma}:\mathbb R^2\longrightarrow\mathbb R$  for $K\in\mathcal{O},\sigma\in\bar{\mathlarger\varepsilon}_K$ \\%(where $F_{K,L}$ if $\sigma=K|L$ ). \\
The numerical convection fluxes are monotone:
\begin{align}\label{monotony}
&F_{K,\sigma}:[0,u_{\max}]^2\longrightarrow\mathbb R; (a,b)\longmapsto F_{K,\sigma}(a,b) \nonumber\\&\mbox{ is nondecreasing with respect to   }   a \mbox{ and nonincreasing with respect to } b. 
\end{align}
The numerical convection fluxes are conservative:
\begin{align}\label{conservativity} 
\mbox{For all } \sigma=K|L, \mbox{ for all } a,b\in[0,u_{\max}];  F_{K,L}(a,b)=-F_{L,K}(b,a). 
\end{align}
The numerical convection fluxes are regular:
\begin{align}\label{regularity}
 &F_{K,\sigma} \mbox{ is Lipschitz continuous }\mbox{ and admits }  m(\sigma)M  \mbox{ as Lipschitz constant on }  \nonumber\\&[0,u_{\max}]. 
\end{align}
The numerical convection fluxes are consistent:
\begin{align}\label{consistency}
\mbox{ For all } s\in[0,u_{\max}], F_{K,\sigma}(s,s)=m(\sigma)f(s).n_{K,\sigma}.
\end{align}

The Godunov, the splitting flux of Osher and Rusanov schemes may be the most common examples of schemes with fluxes satisfying \eqref{monotony}-\eqref{consistency}.\\
Notice that the hypothesis \eqref{regularity} and \eqref{consistency} entail the bound
\begin{align}\label{borne}
\forall a,b\in[0,u_{\max}],\;\;|F_{K,\sigma}(s,s)|\leq(||f||_{L^\infty}+M u_{\max})m(\sigma)
\end{align}
The discrete unknowns $u_K^{n+1}$ for all control volume $K\in\mathcal O$, and $n\in\mathbb N$ are defined  thanks to the following relations:  first we initialize the scheme by
\begin{align}\label{esti0}
u_K^0=\frac{1}{m(K)}\displaystyle\int_K u_0(x)dx\;\;\;\forall K\in\mathcal O,
\end{align}
then,  we use the implicit scheme for the discretization of problem $(P)$:\\ $\forall n>0,\;\forall K\in\mathcal O$,
\begin{align}\label{esti1}
 m(K)\frac{u_K^{n+1}-u_K^n}{\delta t}+\displaystyle\sum_{\sigma\in\mathlarger\varepsilon_K}F_{K,\sigma}(u_{K}^{n+1}\!,u_{K,\sigma}^{n+1})-\displaystyle\sum_{\sigma\in\mathlarger\varepsilon_K}\tau_{K,\sigma}\biggl(\phi(u_{K,\sigma}^{n+1})-\phi(u_{K}^{n+1})\biggr)=0.
\end{align}
If the scheme has a solution, we will say that the piecewise constant function $\uOdt(t,x)$ defined by: 
\begin{align}
\uOdt(t,x)=u^{n+1}_K \mbox{ for } x\in K \mbox{ and } t\in]n\delta t,(n+1)\delta t], \mbox{ a.e.} 
\end{align}
is an approximate solution to $(P)$. 
\begin{rem}\label{div}
\begin{enumerate}
\item  Notice that using relation \eqref{consistency}  and the fact that for all $s\in \mathbb R$  $\div_xf(s)=0$, one gets
\begin{align}
\forall s\in[0,u_{\max}], \forall K\in\mathcal{O}, \displaystyle\sum_{\sigma\in\bar{\mathlarger\varepsilon}_K}F_{K,\sigma}(s,s)=0.
\end{align}
This is equivalent to:
\begin{align}\label{Kext}
\forall s\in[0,u_{\max}], \forall K\in\mathcal{O}, \displaystyle\sum_{\sigma\in\mathlarger\varepsilon_K} F_{K,\sigma}(s,s)+\displaystyle\sum_{\sigma\in\mathlarger\varepsilon_K^{ext}} F_{K,\sigma}(s,s)=0.
\end{align}
\item Notice that the prescribed zero flux boundary condition is in fact included in \eqref{esti1}. One can extend the summation over $\sigma\in\bar{\mathlarger\varepsilon}_K$, and by convention regard the fluxes as:
\begin{equation}\label{neumann}
F_{K,\sigma}(u_{K}^{n+1}\!,u_{K,\sigma}^{n+1})=\left\{\begin{array}{lrc}
F_{K,\sigma}(u_{K}^{n+1}\!,u_{L}^{n+1})        &\mbox{ if } \;\;\; \sigma \in K|L,\\
0  &\mbox{ if } \;\;\; \sigma\in\mathlarger\varepsilon_K^{ext}.
\end{array}\right.
\end{equation}
\begin{equation}\label{neumanndiffusion}
\tau_{K,\sigma}\biggl(\phi(u_{K,\sigma}^{n+1})-\phi(u_{K}^{n+1}\biggr)=\left\{\begin{array}{lrc}
\tau_{K|L}\biggl(\phi(u_{L}^{n+1})-\phi(u_{K}^{n+1})\biggr)       &\mbox{ if } \;\;\; \sigma \in K|L,\\
0  &\mbox{ if } \;\;\; \sigma\in\mathlarger\varepsilon_K^{ext}.
\end{array}\right.
\end{equation} 
%\item  We expect that also that with explicit scheme all the results of this paper can be  obtain, under a convenient CFL condition on time step and mesh size.
%But we are not able to prove that $\uOdt$ takes values in $[0,u_{ \max}]$ for all $K\in\mathcal{O}$ and $n>0$.
\end{enumerate}
\end{rem}
%%%%%%%%%%%%%%%%%%%%%%%%%%%%%%%%%%%%%%%%%%%%%%%%%%%%%%%%%%%%%%%%%%%%%%%%%%%%%%%%%%%%%%%%%%%%%%%%%%%%%%%%%%%%%%%%%%%%%%%%%%%%%%%%%%%%%%%%%%%%%%%%%%%%%%%%%%%%%%%%%%%%
\section{Discrete entropy inequalities}
This part is devoted to  discrete entropy inequalities. We recall some notations (\cite{EGHMichel}): \\ Denote by $a\bot b=\min(a,b)$ and $a\top b=\max(a,b)$. We define $\eta_k^+(s)=(s-k)^+=s\top k-k$, (respectively $\eta_k^-(s)=(s-k)^-=s\bot k-k$) and  the associated fluxes-functions $\Phi^{\pm}_k$ called entropy fluxes 
\begin{align*}
&\Phi_k^+(s)=sign^+(s-k)(f(s)-f(k))=f(s\top k)-f( k);\\
&\Phi_k^-(s)=sign^-(s-k)(f(s)-f(k))=f(s\bot k)-f(k);\\
&\Phi_k(s)=sign(s-k)(f(s)-f(k)).
\end{align*}
Therefore, the numerical sub and super entropy fluxes functions are defined by the formulas
\begin{align*}
&\Phi_{K,\sigma,k}^{+}(a,b)=F_{K,\sigma}(a\top k,b\top k)-F_{K,\sigma}(k,k);\\
&\Phi_{K,\sigma,k}^{-}(a,b)=F_{K,\sigma}(k,k)-F_{K,\sigma}(a\bot k,b\bot k);\\
&\Phi_{K,\sigma,k}(a,b)=F_{K,\sigma}(a\top k,b\top k)-F_{K,\sigma}(a\bot k,b\bot k).
\end{align*}
From now, we have the following the discrete entropy inequalities.
\begin{lem}
Assume that \eqref{reg2}, \eqref{monotony}- \eqref{consistency}  hold. Let $u_{\mathcal{O},\delta t}$ be an approximate solution of the problem $(P)$ defined by \eqref{esti0}, \eqref{esti1}, . Then for all $k\in[0,u_{\max}]$, for all $K\in\mathcal{O}$, $n\geq0$ the following discrete sub-entropy inequalities hold: 
\begin{align}\label{entropydiscrete}
&\frac{\eta^+_k(u^{n+1}_K)-\eta^+_k(u^{n}_K)}{\delta t}m(K)+\displaystyle\sum_{\sigma\in\mathlarger\varepsilon_K}\Phi_{K,\sigma,k}^{+}(u_K^{n+1},u_{K,\sigma}^{n+1})\nonumber\\&
-\displaystyle\sum_{K|L}\tau_{K|L}\biggl(\eta^+_{\phi(k)}(\phi(u_{L}^{n+1}))-\eta^+_{\phi(k)}(\phi(u_{K}^{n+1}))\biggr)\nonumber\\&\leq\displaystyle\sum_{\sigma\in\mathlarger\varepsilon_K^{ext}}sign^+(u_K^{n+1}-k)m(\sigma)f(k)n_{K,\sigma}.
\end{align}
%where 
%\begin{equation}\label{neumann0}
%\Phi_{K,\sigma,k}^{+}(u_K^{n+1},u_{K,\sigma}^{n+1})=\left\{\begin{array}{rll}
%\Phi_{K,\sigma,k}^{+}(u_K^{n+1},u_{L}^{n+1})       &\mbox{ if } \;\;\; \sigma \in K|L,\\
%-sign^+(u^{n+1}_K-k)F_{K,\sigma}(k,k)  &\mbox{ if } \;\;\; \sigma\in\mathlarger\varepsilon_K^{ext}.
%\end{array}\right.
%\end{equation}
Also the discrete super-entropy inequalities are satisfied (i.e., $\eta^+_k$, $\Phi_{K,\sigma,k}^{+}$, $sign^+$ can be replaced by $\eta^-_k$, and $\Phi_{K,\sigma,k}^{-}$, $sign^-$ in  \eqref{entropydiscrete}. 
\end{lem}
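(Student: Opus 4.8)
The plan is to derive the discrete sub-entropy inequality directly from the scheme relation \eqref{esti1}, written in the equivalent form with summation over all edges $\bar{\mathlarger\varepsilon}_K$ using the conventions \eqref{neumann}–\eqref{neumanndiffusion}, by a standard monotonicity/convexity argument. First I would fix $K\in\mathcal{O}$, $n\geq 0$ and $k\in[0,u_{\max}]$, and observe that since $\eta_k^+$ is convex and Lipschitz with $(\eta_k^+)'=sign^+(\cdot-k)$ a.e., one has the elementary inequality $\eta_k^+(u_K^{n+1})-\eta_k^+(u_K^n)\leq sign^+(u_K^{n+1}-k)\,(u_K^{n+1}-u_K^n)$. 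Multiplying \eqref{esti1} by $\delta t\, sign^+(u_K^{n+1}-k)$ and using this inequality on the time-derivative term reduces matters to showing that the convective and diffusive contributions, once multiplied by $sign^+(u_K^{n+1}-k)$, dominate the corresponding entropy-flux quantities.

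For the diffusive term, the key point is that $\phi$ is nondecreasing, so $sign^+(u_K^{n+1}-k)=sign^+(\phi(u_K^{n+1})-\phi(k))$ whenever $\phi(u_K^{n+1})\neq\phi(k)$, and one checks termwise that
$-sign^+(u_K^{n+1}-k)\,\tau_{K|L}\bigl(\phi(u_L^{n+1})-\phi(u_K^{n+1})\bigr)\leq -\tau_{K|L}\bigl(\eta^+_{\phi(k)}(\phi(u_L^{n+1}))-\eta^+_{\phi(k)}(\phi(u_K^{n+1}))\bigr)$
using convexity of $\eta^+_{\phi(k)}$; summing over $L\in\mathcal{N}(K)$ gives the diffusive part of \eqref{entropydiscrete}. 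For the convective term I would use the monotonicity \eqref{monotony} and conservativity in the classical way: the crucial inequality is
$sign^+(u_K^{n+1}-k)\bigl(F_{K,\sigma}(u_K^{n+1},u_{K,\sigma}^{n+1})-F_{K,\sigma}(k,k)\bigr)\geq F_{K,\sigma}(u_K^{n+1}\top k,\,u_{K,\sigma}^{n+1}\top k)-F_{K,\sigma}(k,k)=\Phi_{K,\sigma,k}^+(u_K^{n+1},u_{K,\sigma}^{n+1}),$
which follows by distinguishing the cases $u_K^{n+1}\geq k$ (where $sign^+=1$ and $u_K^{n+1}\top k=u_K^{n+1}$, while $F_{K,\sigma}(u_K^{n+1},u_{K,\sigma}^{n+1})\leq F_{K,\sigma}(u_K^{n+1},u_{K,\sigma}^{n+1}\top k)$ by nonincreasing dependence on the second argument) and $u_K^{n+1}<k$ (where $sign^+=0$ and $F_{K,\sigma}(k,u_{K,\sigma}^{n+1}\top k)\geq F_{K,\sigma}(k,k)$ again by the second-argument monotonicity). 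Summing this over $\sigma\in\bar{\mathlarger\varepsilon}_K$ produces $\sum_{\sigma\in\bar{\mathlarger\varepsilon}_K}\Phi_{K,\sigma,k}^+$, and then \eqref{Kext} lets me replace $\sum_{\sigma\in\bar{\mathlarger\varepsilon}_K}F_{K,\sigma}(k,k)$ by $-\sum_{\sigma\in\mathlarger\varepsilon_K^{ext}}F_{K,\sigma}(k,k)$; combined with the convention $F_{K,\sigma}=0$ on exterior edges and the consistency identity $F_{K,\sigma}(k,k)=m(\sigma)f(k).n_{K,\sigma}$, this moves the exterior contribution to the right-hand side as exactly $\sum_{\sigma\in\mathlarger\varepsilon_K^{ext}}sign^+(u_K^{n+1}-k)\,m(\sigma)f(k)n_{K,\sigma}$.

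Collecting the three parts gives \eqref{entropydiscrete}. The super-entropy inequality follows by the symmetric argument applied to $\eta_k^-$, which is concave and Lipschitz with derivative $sign^-(\cdot-k)$, reversing the inequalities accordingly, or equivalently by applying the sub-entropy computation to $u_{\max}-u$ with flux $\tilde f(s)=-f(u_{\max}-s)$. I expect the main obstacle to be purely bookkeeping: carefully tracking the sign conventions so that the boundary term $\sum_{\sigma\in\mathlarger\varepsilon_K^{ext}}$ appears with the correct sign and the exterior-edge fluxes are handled consistently between \eqref{Kext}, \eqref{neumann} and \eqref{consistency}. The analytic content — convexity of $\eta_k^\pm$, monotonicity of the numerical flux, and monotonicity of $\phi$ — is entirely standard.
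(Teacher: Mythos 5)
Your proposal follows essentially the same route as the paper: multiply the scheme by $sign^+(u_K^{n+1}-k)$, use the convexity inequality for the accumulation term, the monotonicity \eqref{monotony} for the convective term, the monotonicity of $\phi$ together with convexity of $\eta^+_{\phi(k)}$ for the diffusive term, and the identity \eqref{Kext} to generate the boundary term (the paper packages this last step as subtracting the ``constant $k$'' equation \eqref{esti5} from \eqref{esti1}, which is the same device). However, several of your intermediate inequalities are written in the wrong direction, and one of them, taken literally, breaks the argument. The displayed diffusion inequality should read
$-sign^+(u_K^{n+1}-k)\,\tau_{K|L}\bigl(\phi(u_L^{n+1})-\phi(u_K^{n+1})\bigr)\ \geq\ -\tau_{K|L}\bigl(\eta^+_{\phi(k)}(\phi(u_L^{n+1}))-\eta^+_{\phi(k)}(\phi(u_K^{n+1}))\bigr)$,
not ``$\leq$'': since $sign^+(u_K^{n+1}-k)$ is a subgradient of $\eta^+_{\phi(k)}$ at $\phi(u_K^{n+1})$ (this is where the monotonicity of $\phi$ enters), the subgradient inequality gives $sign^+(u_K^{n+1}-k)(\phi(u_L^{n+1})-\phi(u_K^{n+1}))\leq\eta^+_{\phi(k)}(\phi(u_L^{n+1}))-\eta^+_{\phi(k)}(\phi(u_K^{n+1}))$, and it is this direction that lets you replace the diffusion term by its entropy version while preserving the final ``$\leq$''; with your stated direction the substitution goes the wrong way. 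Likewise, in both sub-cases of the convective estimate you invoke the second-argument monotonicity backwards: since $b\top k\geq b$ and $F_{K,\sigma}$ is nonincreasing in its second argument, one has $F_{K,\sigma}(a,b)\geq F_{K,\sigma}(a,b\top k)$ (case $a\geq k$) and $F_{K,\sigma}(k,b\top k)\leq F_{K,\sigma}(k,k)$ (case $a<k$), which are exactly the inequalities needed to establish your (correctly stated) key bound $sign^+(u_K^{n+1}-k)\bigl(F_{K,\sigma}(u_K^{n+1},u_{K,\sigma}^{n+1})-F_{K,\sigma}(k,k)\bigr)\geq\Phi^+_{K,\sigma,k}(u_K^{n+1},u_{K,\sigma}^{n+1})$. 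These are sign slips rather than a missing idea --- the correct directions follow from the very hypotheses you cite and yield the lemma exactly as in the paper --- but as written those steps are false and must be reversed.
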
 
Notice that, if  for all $K\in\mathcal{O}$, $u_K^{n+1}$ satisfy both discrete sub-entropy inequality and discrete super-entropy inequality, then $u_K^{n+1}$ can be seen as  a discrete entropy solution in $K\times]n\delta t,(n+1)\delta t]$.
\begin{proof}
%Thanks to the Remark \ref{div} and relation \eqref{neumann}, \eqref{neumanndiffusion} the equation \eqref{esti1} can be rewritten in the following form:
%\begin{align}\label{esti4} 
%m(K)\frac{u_K^{n+1}-u_K^n}{\delta t}+\displaystyle\sum_{\sigma\in\mathlarger\varepsilon_K}F_{K,\sigma}(u_{K}^{n+1}\!,u_{K,\sigma}^{n+1})-\displaystyle\sum_{L\in\mathcal{ N}(K)}\tau_{K|L}\biggl(\phi(u_{K,\sigma}^{n+1})-\phi(u_{K}^{n+1})\biggr)=0;.
%\end{align}
Thanks to the Remark \ref{div}, the constant $k\in[0,u_{\max}]$ is solution of:
\begin{align}\label{esti5}
m(K)\frac{k-k}{\delta t}+\displaystyle\sum_{\sigma\in\mathlarger\varepsilon_K}F_{K,\sigma}(k,k)-\displaystyle\sum_{K|L}\tau_{K|L}\biggl(\phi(k)-\phi(k)\biggr)=-\displaystyle\sum_{\sigma\in\mathlarger\varepsilon_K^{ext}}F_{K,\sigma}(k,k).
\end{align}
Substracting from the equality \eqref{esti1} the equality \eqref{esti5}, we obtain:
\begin{align}\label{esti6}
&\frac{1}{\delta t}\biggl((u_K^{n+1}-k)-(u_K^n-k)\biggr) m(K)+\displaystyle\sum_{\sigma\in\mathlarger\varepsilon_K}\biggl(F_{K,\sigma}(u_{K}^{n+1}\!,u_{K,\sigma}^{n+1})-F_{K,\sigma}(k,k)\biggr)\nonumber\\&-\displaystyle\sum_{L\in\mathcal{ N}(K)}\tau_{K|L}\biggl[\biggl(\phi(u_{L}^{n+1})-\phi(k)\biggr)-\biggl(\phi(u_{K}^{n+1})-\phi(k)\biggr)\biggr]=\displaystyle\sum_{\sigma\in\mathlarger\varepsilon_K^{ext}}F_{K,\sigma}(k,k).   
\end{align}
Multiply \eqref{esti6} by $(\eta^+_k)'(u^{n+1}_K)=sign^+(u_K^{n+1}-k)$.
We recall  that for all convex function $J$, we have  for all $z_1, z_2\in\mathbb R$, the convexity inequality $(z_1-z_2)J'(z_1)\geq J(z_1)-J(z_2)$. (Here, we may consider $J'$ as being multivalued, in the sense of sub differential of $J$). First, we use this convexity inequality to obtain
\begin{align}\label{esti8}
sign^+(u_K^{n+1}-k)\biggl((u_K^{n+1}-k)-(u_K^n-k)\biggr)\geq \biggl((u_K^{n+1}-k)^+-(u_K^n-k)^+\biggr).
\end{align}
Second, due to the monotony of the numerical fluxes, we see that
\begin{align}
sign^+(u_K^{n+1}-k)\biggl(F_{K,\sigma}(u_{K}^{n+1}\!,u_{K,\sigma}^{n+})-F_{K,\sigma}(k,k)\biggr)\geq\Phi_{K,\sigma,k}^{+}(u_K^{n+1},u_{K,\sigma}^{n+1}).
\end{align}
Finally, using the convexity inequality and the monotonicity of $\phi$, we have:
\begin{align}
-(\eta^+_k)'(u^{n+1}_K)\biggl[\biggl(\phi(u_{L}^{n+1})-\phi(k)\biggr)-\biggl(\phi(u_{K}^{n+1})-\phi(k)\biggr)\biggr]\geq-\biggl(\eta^+_{\phi(k)}(\phi(u_{L}^{n+1})-\eta^+_{\phi(k)}(\phi(u_{K}^{n+1})\biggr).
\end{align}
Then, we get
\begin{align*}%\label{esti7}
&\frac{1}{\delta t}\biggl((u_K^{n+1}-k)^+-(u_K^n-k)^+\biggr)m(K)+\displaystyle\sum_{\sigma\in\mathlarger\varepsilon_K}\Phi_{K,\sigma,k}^{+}(u_K^{n+1},u_{K,\sigma}^{n+1})\nonumber\\&-\displaystyle\sum_{K|L}\tau_{K|L}\biggl(\eta^+_{\phi(k)}(\phi(u_{L}^{n+1}))-\eta^+_{\phi(k)}(\phi(u_{K}^{n+1}))\biggr)\nonumber\\&\leq\displaystyle\sum_{\sigma\in\mathlarger\varepsilon_K^{ext}}sign^+(u_K^{n+1}-k)m(\sigma)f(k)n_{K,\sigma}.
\end{align*}
This prove \eqref{entropydiscrete}.
In the same way,  we prove the  discrete super-entropy inequalities. Finally, we deduce that $u^{n+1}_K$ satisfies the discrete entropy inequality in this sense:
\begin{align}\label{entropydiscrete01}
&\frac{\eta_k(u^{n+1}_K)-\eta_k(u^{n}_K)}{\delta t}m(K)+\displaystyle\sum_{\sigma\in\bar{\mathlarger\varepsilon}_K}\Phi_{K,\sigma,k}(u_K^{n+1},u_{K,\sigma}^{n+1})\nonumber\\&
-\displaystyle\sum_{K|L}\tau_{K|L}\biggl(\eta_{\phi(k)}(\phi(u_{L}^{n+1})-\eta_{\phi(k)}(\phi(u_{K}^{n+1})\biggr)
\nonumber\\&\leq\displaystyle\sum_{\sigma\in\mathlarger\varepsilon_K^{ext}}sign(u_K^{n+1}-k)m(\sigma)f(k)n_{K,\sigma}.
\end{align}
\end{proof}
%%%%%%%%%%%%%%%%%%%%%%%%%%%%%%%%%%%%%%%%%%%%%%%%%%%%%%%%%%%%%%%%%%%%%%%%%%%%%%%%%%%%%%%%%%%%%%%%%%%%%%%%%%%%%%%%%%%%%%%%%%%%%%%%%%%%%%%%%%%%%%%%%%%%%%%%%%%%%%%%%%%%%
\section{Estimates of discrete solution and existence}
We wish to prove that the approximate solution $\uOdt$ satisfies the continuous entropy inequalities (see section 5). To this purpose, we give fundamental estimates useful for proving convergence of the scheme. First, we prove the $L^\infty$ stability of the scheme, this comes from discrete entropy inequalities and the boundedness of the flux $f$ with the relation \eqref{f}.
                     %%%%%%%%%%%%%%%%%%%%%%%%%%%%%%%%%%%%%%%%%%%%%%%%%%%%%%%%%%%%%%%%%%%%%%%%%%%%%%
\subsection{$L^\infty$ bound on discrete solutions}
\begin{prop}\label{exi}
Suppose that $K\in\mathcal{O}$, the assumptions  \eqref{reg2}, \eqref{monotony}- \eqref{consistency}  hold. Assume that $u_0\in[0,u_{\max}]$. Then the approximate solution $\uOdt(t,x)$ of problem $(P)$ defined by  \eqref{esti0}, \eqref{esti1} satisfies:
\begin{align}\label{linfini}
0\leq u_{K}^n(t,x)\leq u_{\max}\;\;\;\;\;\forall K\in\mathcal{O}.
\end{align}
\end{prop}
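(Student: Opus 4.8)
The plan is to establish the two-sided bound \eqref{linfini} by a discrete maximum-principle argument, using the discrete sub- and super-entropy inequalities of the previous Lemma at the two extremal values $k=0$ and $k=u_{\max}$ and iterating in the time index $n$. The base level is immediate: by \eqref{esti0}, $u_K^0$ is the mean value of $u_0$ over $K$, and since $u_0$ takes values in $[0,u_{\max}]$ so does $u_K^0$; hence $\eta_0^-(u_K^0)=0$ and $\eta_{u_{\max}}^+(u_K^0)=0$ for every $K\in\mathcal{O}$.

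For the lower bound, fix $n\ge 0$ and use the discrete super-entropy inequality (that is, \eqref{entropydiscrete} with $\eta_0^-$, $\Phi_{K,\sigma,0}^{-}$ and $sign^-$) with $k=0$. Because $f(0)=0$ by \eqref{f}, the boundary term on the right-hand side vanishes. Summing over all $K\in\mathcal{O}$, the numerical convection contributions cancel pairwise over the interior interfaces, since conservativity \eqref{conservativity} gives $\Phi_{K,L,0}^{-}(a,b)+\Phi_{L,K,0}^{-}(b,a)=0$, and the discrete diffusion contributions cancel by the antisymmetry of the discrete Laplacian; we are left with
\begin{equation*}
\sum_{K\in\mathcal{O}}m(K)\,\eta_0^-(u_K^{n+1})\le \sum_{K\in\mathcal{O}}m(K)\,\eta_0^-(u_K^n).
\end{equation*}
Iterating in $n$ and invoking $\sum_{K}m(K)\,\eta_0^-(u_K^0)=0$ from the base level gives $\sum_{K}m(K)\,\eta_0^-(u_K^n)\le 0$ for every $n$; since $\eta_0^-(s)=\min(s,0)\le 0$ pointwise and all $m(K)>0$, this forces $\eta_0^-(u_K^n)=0$, i.e.\ $u_K^n\ge 0$. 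The upper bound is symmetric: take $k=u_{\max}$ in the sub-entropy inequality \eqref{entropydiscrete}, use $f(u_{\max})=0$ to remove the boundary term, sum over $\mathcal{O}$ and cancel the flux terms as above to get $\sum_{K}m(K)\,\eta_{u_{\max}}^+(u_K^{n+1})\le\sum_{K}m(K)\,\eta_{u_{\max}}^+(u_K^n)$; iterating and using $\eta_{u_{\max}}^+(u_K^0)=0$ together with $\eta_{u_{\max}}^+\ge 0$ yields $u_K^n\le u_{\max}$.

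I do not expect a deep obstacle; the argument is essentially a summation-by-parts bookkeeping. The point requiring care is verifying that, after summation over $\mathcal{O}$, both the convective numerical fluxes and the discrete diffusion fluxes cancel exactly and the boundary interfaces contribute nothing — this is where the Neumann convention \eqref{neumann}--\eqref{neumanndiffusion}, conservativity \eqref{conservativity}, and the structural assumption $f(0)=f(u_{\max})=0$ from \eqref{f} all enter. A minor technical caveat is that the discrete entropy inequalities were obtained under the monotonicity hypothesis \eqref{monotony}, stated only on $[0,u_{\max}]^2$; one keeps this consistent either by organising the argument as an induction on $n$ so that the hypothesis is met at each step, or by extending the numerical fluxes monotonically outside $[0,u_{\max}]$, proving the bound for the extended scheme, and observing that the discrete solution then never leaves $[0,u_{\max}]$.
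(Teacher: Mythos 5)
Your argument is essentially the paper's own proof: sum the discrete sub-/super-entropy inequalities at the extremal values $k=u_{\max}$ and $k=0$, kill the boundary terms using $f(0)=f(u_{\max})=0$ from \eqref{f}, cancel the interior convective and diffusive contributions by conservativity, and induct on $n$ starting from \eqref{esti0}. The upper bound is clean. In the lower bound, however, your final logical step does not follow as written: from $\sum_K m(K)\,\eta_0^-(u_K^n)\le 0$ together with $\eta_0^-\le 0$ pointwise you cannot conclude that each term vanishes --- for that you would need the sum to be $\ge 0$. The repair is immediate and purely a matter of sign convention: the super-entropy inequality, obtained by multiplying \eqref{esti6} by an element of the subdifferential of the \emph{convex} function $s\mapsto (k-s)^+$ (note that $\eta_k^-(s)=\min(s-k,0)$ itself is concave), telescopes the nonnegative quantity $(k-u_K^n)^+=-\eta_k^-(u_K^n)$, so that at $k=0$ one gets $\sum_K m(K)(0-u_K^{n+1})^+\le\sum_K m(K)(0-u_K^{n})^+$, and nonnegativity of $(\cdot)^+$ then forces each term to vanish exactly as in your upper-bound argument. (The paper's own write-up is equally loose here, concluding ``$(u_K^{n+1})^-\le 0$'', which is vacuous under its convention $(r)^-=\min(r,0)$.) Your closing remark that \eqref{monotony} is only assumed on $[0,u_{\max}]^2$, so the argument should be organised as an induction in $n$ or the fluxes extended monotonically, is a legitimate point that the paper leaves implicit.
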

\begin{proof}
Summing \eqref{entropydiscrete} over $K\in\mathcal{O}$, we get
\begin{align}\label{entropydiscrete1}
&\displaystyle\sum_{K\in\mathcal{O}}m(K)\frac{\eta^+_k(u^{n+1}_K)-\eta^+_k(u^{n}_K)}{\delta t}+\displaystyle\sum_{K\in\mathcal{O}}\displaystyle\sum_{\sigma\in\bar{\mathlarger\varepsilon}_K}\Phi_{K,\sigma,k}^{+}(u_K^{n+1},u_{K,\sigma}^{n+1})\nonumber\\&-\displaystyle\sum_{K\in\mathcal{O}}\displaystyle\sum_{L\in\mathcal{ N}(K)}\tau_{K|L}\biggl(\eta^+_{\phi(k)}(\phi(u_{L}^{n+1}))-\eta^+_{\phi(k)}(\phi(u_{K}^{n+1}))\biggr)\nonumber\\&\leq\displaystyle\sum_{K\in\mathcal{O}}\displaystyle\sum_{\sigma\in\mathlarger\varepsilon_K^{ext}}m(\sigma)|f(k)n_{K,\sigma}|.
\end{align}
In inequality \eqref{entropydiscrete1}, take $k=u_{\max}$ and use \eqref{f} to obtain:
\begin{align*}
&\displaystyle\sum_{K\in\mathcal{O}}\frac{m(K)}{\delta t}\biggl(u_K^{n+1}-u_{\max}\biggr)^+-\displaystyle\sum_{K\in\mathcal{O}}\frac{m(K)}{\delta t}\biggl(u_K^n-u_{\max}\biggr)^++\displaystyle\sum_{K\in\mathcal{O}}\displaystyle\sum_{\sigma\in\mathlarger\varepsilon_K}\Phi_{K,\sigma,u_{\max}}^{+}(u_K^{n+1},u_{K,\sigma}^{n+1})
\nonumber\\&-\displaystyle\sum_{K\in\mathcal{O}}\sum_{L\in\mathcal{ N}(K)}\tau_{K|L}\biggl(\eta^+_{\phi(u_{\max})}(\phi(u_{L}^{n+1}))-\eta^+_{\phi(u_{\max})}(\phi(u_{K}^{n+1}))\biggr)\leq0.
\end{align*}
From now, remark that due to the conservativity  of the scheme we have
\begin{align*}
&\displaystyle\sum_{K\in\mathcal{O}}\sum_{\sigma\in\mathlarger\varepsilon_K}\!\!\Phi_{K,\sigma,u_{\max}}^{+}(u_K^{n+1},u_{K,\sigma}^{n+1})\!=\displaystyle\sum_{K\in\mathcal{O}}\sum_{L\in\mathcal{N}(K)}\Phi_{K,L,u_{\max}}^{+}(u_K^{n+1},u_{L}^{n+1})=0\\
&\displaystyle\sum_{K\in\mathcal{O}}\!\sum_{K|L}\!\biggl(\eta^+_{\phi(k)}(\phi(u_{L}^{n+1}))-\eta^+_{\phi(k)}(\phi(u_{K}^{n+1}))\biggr)\!=\!\!\displaystyle\sum_{K\in\mathcal{O}}\!\sum_{L\in\mathcal{N}(K)}\!\biggl(\eta^+_{\phi(k)}(\phi(u_{L}^{n+1}))-\eta^+_{\phi(k)}(\phi(u_{K}^{n+1}))\biggr)=0.
\end{align*}
Therefore 
\begin{align}\label{esti11} 
\displaystyle\sum_{K\in\mathcal{O}}\frac{m(K)}{\delta t}\biggl((u_K^{n+1}-u_{\max})^+-(u_K^n-u_{\max})^+\biggr)\leq 0.
\end{align}
Since  $0\leq u_K^0\leq u_{\max}$, by induction we prove $(u_K^{n+1}-u_{\max})^+\leq0$.
In the same way, in the super-entropy inequality, taking $k=0$, use \eqref{f}, we also prove that $(u_K^{n+1})^-\leq0$. 
\end{proof}
               %%%%%%%%%%%%%%%%%%%%%%%%%%%%%%%%%%%%%%%%%%%%%%%%%%%%%%%%%%%%%%%%%%%%%%%%%%%%%%%%%%%%
\subsection{Weak BV and $L^2(0,T,H^1(\Omega))$ estimates}
Now, we give the weak BV and $L^2(0,T,H^1(\Omega))$ estimates. The $L^2(0,T,H^1(\Omega))$ as the $L^\infty$ estimate are necessary for justifying compactness properties of discrete solutions. The weak BV-stability does not give directly any compactness result, however, it plays a crucial role in the proof of continuous entropy inequality (see section 5). To start with, we recall a Lemma which is one ingredient of the proof of Lemma \ref{weakbv} below.
\begin{lem}\label{maj}
Let $G:\mathbb [a,b]\longrightarrow \mathbb R$ be a monotone Lipschitz continuous function with Lipschitz constant $L>0$ and $a,b\in\mathbb R$. Then for all $c,d\in[a,b]$, one has
\begin{align*}
\left|\displaystyle\int_c^d\biggl(G(x)-G(c)\biggr)dx\right|\geq\frac{1}{2L}\biggl(G(d)-G(c)\biggr)^2.
\end{align*}
\end{lem}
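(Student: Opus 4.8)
The plan is to prove Lemma~\ref{maj} by reducing to the case where $G$ is nondecreasing and $c \le d$, and then comparing the integrand against a suitable affine minorant of $G(x) - G(c)$. First I would dispose of the trivial normalizations: if $G$ is nonincreasing, replace $G$ by $-G$ (which is nondecreasing, has the same Lipschitz constant, and leaves both sides of the inequality unchanged since they involve $G(d) - G(c)$ squared and the absolute value of the integral); if $d < c$, swap the roles of $c$ and $d$, which again preserves both sides. So without loss of generality assume $G$ nondecreasing and $c \le d$. Then $G(x) - G(c) \ge 0$ on $[c,d]$, so the absolute value on the left is just $\int_c^d (G(x) - G(c))\,dx$, and we must show this is at least $\frac{1}{2L}(G(d) - G(c))^2$.

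The key step is the following geometric observation: since $G$ is $L$-Lipschitz, for every $x \in [c,d]$ we have $G(d) - G(x) \le L(d - x)$, equivalently $G(x) - G(c) \ge (G(d) - G(c)) - L(d - x)$. Also $G(x) - G(c) \ge 0$. Hence $G(x) - G(c) \ge \max\{0,\ (G(d) - G(c)) - L(d-x)\}$ pointwise on $[c,d]$. Writing $\Delta := G(d) - G(c) \ge 0$, the right-hand side is zero for $x \le d - \Delta/L$ and equals $\Delta - L(d-x)$ for $x \ge d - \Delta/L$; note $d - \Delta/L \ge c$ precisely because $\Delta = G(d) - G(c) \le L(d-c)$ by the Lipschitz bound, so the minorant is genuinely supported inside $[c,d]$. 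Integrating this triangular minorant over $[c,d]$ gives $\int_{d - \Delta/L}^d (\Delta - L(d-x))\,dx = \frac{\Delta^2}{2L}$, which is exactly the desired lower bound. Then $\int_c^d (G(x) - G(c))\,dx \ge \frac{\Delta^2}{2L} = \frac{1}{2L}(G(d) - G(c))^2$.

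I do not anticipate a serious obstacle here; the only point requiring a little care is checking that the truncation point $d - \Delta/L$ lies in $[c,d]$ so that the integral of the minorant is computed correctly — and this is guaranteed by the Lipschitz hypothesis itself, which is what makes the constant $\frac{1}{2L}$ (rather than something worse) attainable. An alternative but essentially equivalent route is to use the substitution $y = G(x)$ after reducing to the strictly increasing case, or to invoke the identity $\int_c^d (G(x) - G(c))\,dx = \int_{G(c)}^{G(d)} (d - G^{-1}(s))\,ds$ and bound $d - G^{-1}(s) \ge (s - G(c))/L$; I would present whichever is shortest, most likely the direct affine-minorant argument above since it avoids any discussion of invertibility or of $G$ being merely monotone rather than strictly monotone.
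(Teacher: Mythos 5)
Your proof is correct and is essentially identical to the paper's: after the same reduction to the nondecreasing case with $c\le d$, the paper also bounds $G$ from below by the very same truncated affine (triangular) minorant $H$ with slope $L$ anchored at $d$, and integrates it to obtain $\frac{1}{2L}(G(d)-G(c))^2$. Your explicit verification that the truncation point $d-\Delta/L$ lies in $[c,d]$ is a small point the paper leaves implicit, but the argument is the same.
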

\begin{proof}
In order to prove this result, we assume, for instance, that $G$ is nondecreasing and $c<d$ (the other cases are similar). Then, ones has $G(s)\geq H(s)$, for all $s\in[c,d]$, where $H(s)=G(c)$ for $s\in[c,d-l]$ and $H(s)=G(c)+(s-d+l)L$ for $s\in[d-l,d]$, with $lL=G(d)-G(c)$, and therefore:
\begin{align*}
\displaystyle\int_c^d(G(s)-G(c))ds\geq\displaystyle\int_c^d(H(s)-G(c))ds=\frac{l}{2}(G(d)-G(c))=\frac{1}{2L}(G(d)-G(c))^2.
\end{align*}
\end{proof}
Now, we establish the weak BV-stability of the scheme.
\begin{lem}\label{weakbv}(Weak BV-Estimate) 
Suppose that \eqref{reg2},\eqref{monotony}-\eqref{consistency}  hold. Let $\uOdt$ be an approximate solution of problem $(P)$ defined by  \eqref{esti0}, \eqref{esti1}. Let $T>0$, and set $N=\max\{n\in\mathbb N, n<\frac{T}{\delta t}\}$ and $L\in\mathcal N(K)$ (with convention $u^{n+1}_K\geq u^{n+1}_L)$. Then there exists $C=C(||f||_{L^\infty},u_{\max},T,|\Omega|)\geq0$ such that %if $\delta t<T$,%$C=C(Lip(f),T,\alpha,\beta,\beta)\leq 0$ such that $\delta t<T$,
	\begin{align}\label{bvestimate}
	&\displaystyle\sum_{n=0}^N\delta t\displaystyle\sum_{K|L}\biggl[\max_{u_L^{n+1}\leq c\leq d\leq u_K^{n+1}}\biggl(F_{K,\sigma}(d,c)-F_{K,\sigma}(d,d)\biggr)\biggr]\nonumber\\&
	+\displaystyle\sum_{n=0}^N\delta t\displaystyle\sum_{K|L}\biggl[\max_{u_L^{n+1}\leq c\leq d\leq u_K^{n+1}}\biggl(F_{K,\sigma}(d,c)-F_{K,\sigma}(c,c)\biggr)\biggr]\leq \frac{C}{\sqrt{h}}.
	\end{align}
	\end{lem}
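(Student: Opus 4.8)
The plan is to multiply the implicit scheme \eqref{esti1} by $\delta t\, u_K^{n+1}$, sum over all control volumes $K\in\mathcal{O}$ and over $n=0,\dots,N$, and extract a ``weak BV'' quantity from the convective terms after a discrete integration by parts. First I would rewrite the time-derivative contribution: since $m(K)(u_K^{n+1}-u_K^n)u_K^{n+1}\ge \tfrac12 m(K)\bigl((u_K^{n+1})^2-(u_K^n)^2\bigr)$ by the elementary inequality $a(a-b)\ge \tfrac12(a^2-b^2)$, the telescoping sum in $n$ is controlled by $\tfrac12\|u_{\mathcal{O}}^0\|_{L^2(\Omega)}^2\le \tfrac12 u_{\max}^2 m(\Omega)$ using Proposition~\ref{exi}. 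The diffusion terms, after summing by parts (using conservativity \eqref{conservativity} and the fact that boundary fluxes vanish), give $\sum_{n}\delta t\sum_{K|L}\tau_{K|L}\bigl(\phi(u_L^{n+1})-\phi(u_K^{n+1})\bigr)\bigl(u_L^{n+1}-u_K^{n+1}\bigr)\ge 0$ by monotonicity of $\phi$, so these terms can simply be dropped from the lower bound.

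The heart of the argument is the convective terms. Summing by parts, $\sum_{K}u_K^{n+1}\sum_{\sigma\in\mathlarger\varepsilon_K}F_{K,\sigma}(u_K^{n+1},u_{K,\sigma}^{n+1})$ rearranges, via \eqref{conservativity}, into $\sum_{K|L}\bigl(u_K^{n+1}-u_L^{n+1}\bigr)F_{K,L}(u_K^{n+1},u_L^{n+1})$. To each edge term I would apply the standard device: introduce the function $\psi_{K,L}(a,b)=\int_b^a F_{K,L}(d,d)\,\mathrm{d}\tau$ (or rather integrate the consistency identity), and write, with $c=u_L^{n+1}\le d=u_K^{n+1}$,
\begin{align*}
(d-c)F_{K,L}(d,c) &= \int_c^d F_{K,L}(d,c)\,\mathrm d\tau\\
&= \int_c^d\bigl(F_{K,L}(d,c)-F_{K,L}(\tau,\tau)\bigr)\mathrm d\tau + \int_c^d F_{K,L}(\tau,\tau)\,\mathrm d\tau.
\end{align*}
The last integral, summed over edges of a fixed $K$ and using $\sum_{\sigma\in\bar{\mathlarger\varepsilon}_K}F_{K,\sigma}(s,s)=0$ from Remark~\ref{div}, telescopes to a boundary term bounded by $C$ via \eqref{borne}. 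The first integral is split as $\int_c^d\bigl(F_{K,L}(d,c)-F_{K,L}(d,\tau)\bigr)\mathrm d\tau+\int_c^d\bigl(F_{K,L}(d,\tau)-F_{K,L}(\tau,\tau)\bigr)\mathrm d\tau$, and to each piece I apply Lemma~\ref{maj}: the integrand $\tau\mapsto F_{K,L}(d,\tau)$ is monotone (nonincreasing) with Lipschitz constant $m(\sigma)M$ by \eqref{monotony}–\eqref{regularity}, and similarly for $\tau\mapsto F_{K,L}(\tau,\tau)$ after noting the relevant monotone piece, so that
\[
\Bigl|\int_c^d\bigl(F_{K,L}(d,c)-F_{K,L}(d,\tau)\bigr)\mathrm d\tau\Bigr|\ge \frac{1}{2m(\sigma)M}\Bigl(\max_{c\le e\le d}\bigl(F_{K,L}(d,c)-F_{K,L}(d,e)\bigr)\Bigr)^2,
\]
and likewise for the other term (here I am using that the max over the interval of a Lipschitz function is attained and can be compared with the value at an endpoint; the precise bookkeeping of which endpoint appears is routine). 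The quantities $F_{K,\sigma}(d,c)-F_{K,\sigma}(d,d)$ and $F_{K,\sigma}(d,c)-F_{K,\sigma}(c,c)$ appearing in \eqref{bvestimate} are exactly these maxima (up to identifying $\max_{c\le e\le d}$ bounds), so putting everything together yields
\[
\sum_{n=0}^N\delta t\sum_{K|L}\frac{1}{2m(\sigma)M}\Bigl(A_{K,L}^{n}\Bigr)^2 \le C,
\]
where $A_{K,L}^n$ abbreviates either of the two edge maxima.

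Finally I would pass from this squared estimate to the linear estimate \eqref{bvestimate} by Cauchy–Schwarz. Writing $\sum_{n}\delta t\sum_{K|L}A_{K,L}^n = \sum_{n}\delta t\sum_{K|L}\sqrt{m(\sigma)}\cdot\frac{A_{K,L}^n}{\sqrt{m(\sigma)}}$, Cauchy–Schwarz gives the bound $\bigl(\sum_n\delta t\sum_{K|L}m(\sigma)\bigr)^{1/2}\bigl(\sum_n\delta t\sum_{K|L}(A_{K,L}^n)^2/m(\sigma)\bigr)^{1/2}$. The first factor is controlled by $\bigl(T\sum_{K}m(\partial K)\bigr)^{1/2}\le \bigl(T\,\alpha^{-1}h^{\ell-1}|\mathcal{O}|\bigr)^{1/2}\le \bigl(T\,\alpha^{-2}h^{-1}m(\Omega)\bigr)^{1/2}$ using the mesh regularity \eqref{reg1}–\eqref{reg2}, which produces the $h^{-1/2}$; the second factor is $\le\sqrt{2MC}$ by the estimate just derived. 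Combining gives $\sum_n\delta t\sum_{K|L}A_{K,L}^n\le C'/\sqrt h$, which is \eqref{bvestimate}.

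\textbf{Main obstacle.} I expect the delicate point to be the careful algebraic rearrangement of the convective contribution and the correct invocation of Lemma~\ref{maj}: one must verify that the maxima $\max_{u_L^{n+1}\le c\le d\le u_K^{n+1}}(F_{K,\sigma}(d,c)-F_{K,\sigma}(d,d))$ and $\max(F_{K,\sigma}(d,c)-F_{K,\sigma}(c,c))$ in the statement really do bound, from above, the quantities that come out of applying Lemma~\ref{maj} to the monotone one-variable slices of $F_{K,\sigma}$, and that the leftover ``diagonal'' flux terms $\int_c^d F_{K,\sigma}(\tau,\tau)\mathrm d\tau$ reassemble (through Remark~\ref{div} and conservativity) into a manageable boundary remainder of size $O(1)$ rather than $O(h^{-1/2})$. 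Handling the sign conventions dictated by the orientation $u_K^{n+1}\ge u_L^{n+1}$ throughout, and making sure no term is double-counted when summing $\sum_K\sum_{L\in\mathcal N(K)}$ versus $\sum_{K|L}$, is where the real care is needed; the rest is a standard Cauchy–Schwarz plus mesh-regularity computation.
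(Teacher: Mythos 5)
Your proposal is correct and follows essentially the same route as the paper: multiply the scheme by $\delta t\,u_K^{n+1}$, sum, bound the evolution term by telescoping, drop the nonnegative diffusion term, reduce the convective term to edge integrals of $F_{K,L}(u_K^{n+1},u_L^{n+1})-F_{K,L}(s,s)$ plus an $O(1)$ boundary remainder, apply Lemma~\ref{maj} to the monotone slices, and finish with Cauchy--Schwarz and the mesh regularity \eqref{reg1}--\eqref{reg2}. The only cosmetic difference is that you decompose $(d-c)F_{K,L}(d,c)$ directly, whereas the paper routes the same algebra through the auxiliary function $\Psi_{K,L}(a)=\int_0^a s\,\frac{d}{ds}F_{K,L}(s,s)\,ds$ and its integration-by-parts identity; the resulting terms are identical.
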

	\begin{proof} 
	Multiplying \eqref{esti1} by $\delta t u_K^{n+1}$ and summing over $K\in\mathcal{O}$ and $n=0,...,N$ yields $A_{Evol}+A_{Conv}+A_{Diff}=0$   with
	\begin{align*}
	A_{Evol}&=\displaystyle\sum_{n=0}^N\displaystyle\sum_{K\in\mathcal{O}}m(K)(u^{n+1}_K-u^{n}_K)u^{n+1}_K;\\
	A_{Conv}&=\displaystyle\sum_{n=0}^N\delta t\displaystyle\sum_{K\in\mathcal{O}}\displaystyle\sum_{\sigma\in\mathlarger\varepsilon_K} F_{K,\sigma}(u_{K}^{n+1}\!,u_{K,\sigma}^{n+1}) u_K^{n+1};\\
	A_{Diff}&=-\displaystyle\sum_{n=0}^N\delta t\displaystyle\sum_{K\in\mathcal{O}}\displaystyle\sum_{K|L}\tau_{K|L}\biggl(\phi(u_{L}^{n+1})-\phi(u_{K}^{n+1})\biggr) u_K^{n+1}.
	\end{align*}
Let us first estimate  $A_{Evol}$. We use the fact that:
\begin{align*}
\forall a,b\in\mathbb R,\;\; (a-b)a=\frac{1}{2}(a-b)^2+\frac{1}{2}a^2-\frac{1}{2}b^2,
\end{align*}
 we get:
\begin{align}
A_{Evol}&=\displaystyle\sum_{n=0}^{N}\displaystyle\sum_{K\in\mathcal{O}}m(K)(u^{n+1}_K-u^{n}_K)u^{n+1}_K\nonumber\\&=\frac{1}{2}\displaystyle\sum_{n=1}^{N-1}\displaystyle\sum_{K\in\mathcal{O}}m(K)(u^{n+1}_K-u^{n}_K)^2+\frac{1}{2}\displaystyle\sum_{K\in\mathcal{O}}m(K)\biggl[(u^{N+1}_K)^2-(u^{0}_K)^2\biggr].
\end{align}
The two first terms are non negative  and due to \eqref{linfini} there exists $C\geq 0$ (that only depends on $|\Omega|$ and $u_{\max}$) such that $-C$ is a lower bound for the last term, then
\begin{align}\label{estiA}
A_{Evol}\geq-C.
\end{align}
Secondly, using summation by parts
\begin{align}
A_{Diff}=\displaystyle\sum_{n=0}^N\delta t\displaystyle\sum_{K\in\mathcal{O}}\displaystyle\sum_{K|L}\tau_{K|L}\biggl(\phi(u_{L}^{n+1})-\phi(u_{K}^{n+1})\biggr) \biggl(u_L^{n+1}-u_K^{n+1}\biggr)\geq0.
\end{align}
Now, we study the term $A_{Conv}$. Due to \eqref{Kext}, it can be rewritten as the sum between $A_{Conv}^{int}$ and $A_{Conv}^{ext}$:
\begin{align}
A_{Conv}^{int}&=\displaystyle\sum_{n=0}^N\displaystyle\sum_{K|L}\delta t\biggl(F_{K,\sigma}(u_{K}^{n+1}\!,u_{L}^{n+1})-F_{K,\sigma}(u_{K}^{n+1}\!,u_{K}^{n+1})\biggr) u_K^{n+1}\nonumber\\
&-\displaystyle\sum_{n=0}^N\displaystyle\sum_{K|L}\delta t\biggl(F_{K,\sigma}(u_{K}^{n+1}\!,u_{L}^{n+1})-F_{K,\sigma}(u_{L}^{n+1}\!,u_{L}^{n+1})\biggr) u_L^{n+1};
\end{align}
\begin{align}
A_{Conv}^{ext}&=-\displaystyle\sum_{n=0}^N\delta t\displaystyle\sum_{K\in\mathcal{O}}u_K^{n+1}\displaystyle\sum_{\sigma\in\mathlarger\varepsilon_K^{ext}}F_{K,\sigma}(u_{K}^{n+1}\!,u_{K}^{n+1})\nonumber\\
&=-\displaystyle\sum_{n=0}^N\delta t\displaystyle\sum_{K\in\mathcal{O}}u_K^{n+1}\displaystyle\sum_{\sigma\in\mathlarger\varepsilon_K^{ext}}m(\sigma) f(u^{n+1}_K).\eta_{K,\sigma}.\nonumber
\end{align}
We can estimate the boundary term $A_{Conv}^{ext}$ by
\begin{align}
|A_{Conv}^{ext}|\leq C(||f||_{L^\infty},u_{\max},T,|\partial\Omega|).
\end{align}
Let us assign:
\begin{align*}
\Psi_{K,L}(a)=\displaystyle\int_0^a s\biggl(\frac{\partial F_{K,L}}{\partial u}(s,s)+\frac{\partial F_{K,L}}{\partial v}(s,s)\biggr)ds=\displaystyle\int_0^a s\frac{d}{ds}F_{K,L}(s,s)ds.
\end{align*}
Then
\begin{align}\label{dF}
\Psi_{K,L}(b)-\Psi_{K,L}(a)&=\displaystyle\int_0^b s\frac{d}{ds}F_{K,L}(s,s)ds-\displaystyle\int_0^a s\frac{d}{ds}F_{K,L}(s,s)ds,\nonumber\\
&=b\biggl(F_{K,L}(b,b)-F_{K,L}(a,b)\biggr)-a\biggl(F_{K,L}(a,a)-F_{K,L}(a,b)\biggr)\nonumber\\&-\int^b_a\biggl(F_{K,L}(s,s)-F_{K,L}(a,b)\biggr)ds.
\end{align}
Take $a=u^{n+1}_{K}$ and $b=u_L^{n+1}$ in \eqref{dF} and multiply by $\delta t$. Summing over $n=0,....,N$  and $L\in\mathcal{N}(K)$, we obtain $A_{Conv}^{int}=A_{Conv}^{int,1}+A_{Conv}^{int,2}$, where:
\begin{align*}
A_{Conv}^{int,1}&=\displaystyle\sum_{n=0}^{n+1}\displaystyle\sum_{K|L}\delta t \int^{u^{n+1}_{K}}_{u^{n+1}_{L}}\biggl(F_{K,L}(u^{n+1}_{K},u^{n+1}_{L})-F_{K,L}(s,s)\biggr)ds\nonumber\\
A_{Conv}^{int,2}&=-\displaystyle\sum_{n=0}^{n+1}\displaystyle\sum_{K|L}\delta t\biggl(\Psi_{K,L}(u_K^{n+1})-\Psi_{K,L}(u_L^{n+1})\biggr).
\end{align*}
We have $\displaystyle\sum_{\sigma\in\bar{\mathlarger\varepsilon}_K}\Psi_{K,L}(s)=0$, for all $s\in [0,u_{\max}]$; then it appears that $A_{Conv}^{int,2}$ reduces to the sum of $\sigma\in\mathlarger\varepsilon_K^{ext}$, and it satisfies $|A_{Conv}^{int,2}|\leq C$ similar to the estimate of $A_{Conv}^{ext}$.\\ 
Now, consider $a,b,c,d\in\mathbb R$ such that $a\leq c\leq d\leq b$. Using the monotonicity of $F_{K,L}$ and Lemma \ref{maj}, we deduce
\begin{align}
\int^b_a\biggl(F_{K,L}(b,a)-F_{K,L}(s,s)\biggr)ds\geq&\int^c_d\biggl(F_{K,L}(d,c)-F_{K,L}(d,s)\biggr)\nonumber\\&\geq\frac{1}{2m(K|L)M}\biggl(F_{K,L}(d,c)-F_{K,L}(d,d)\biggr)^2,
\end{align}
\begin{align}
\int^b_a\biggl(F_{K,L}(b,a)-F_{K,L}(s,s)\biggr)ds\geq&\int^c_d\biggl(F_{K,L}(d,c)-F_{K,L}(d,s)\biggr)\nonumber\\&\geq\frac{1}{2m(K|L)M}\biggl(F_{K,L}(d,c)-F_{K,L}(c,c)\biggr)^2.
\end{align}
Therefore, we get:
\begin{align*}
&\int^{u^{n+1}_K}_{u^{n+1}_L}\!\!\!\biggl(F_{K,L}({u^{n+1}_K},{u^{n+1}_L})\!-\!F_{K,L}(s,s)\biggr)ds\geq\frac{1}{2m(K|L)M}\max_{{u^{n+1}_L}\leq c\leq d\leq {u^{n+1}_K}}\biggl(F_{K,L}(d,c)\!-\!F_{K,L}(d,d)\biggr)^2
\end{align*}
\begin{align*}
&\int^{u^{n+1}_K}_{u^{n+1}_L}\!\!\!\biggl(F_{K,L}({u^{n+1}_K},{u^{n+1}_L})\!-\!F_{K,L}(s,s)\biggr)ds\geq\frac{1}{2m(K|L)M}\max_{{u^{n+1}_L}\leq c\leq d\leq {u^{n+1}_K}}\biggl(F_{K,L}(d,c)\!-\!F_{K,L}(c,c)\biggr)^2.
\end{align*} 
Then, we have 
 \begin{align}
A_{Conv}^{int,1}&\geq\displaystyle\sum_{n=0}^{N}\delta t\displaystyle\sum_{K|L}\frac{1}{4m(K|L)M}\biggl[\max_{u_L^{n+1}\leq c\leq d\leq u_K^{n+1}}\biggl(F_{K,\sigma}(d,c)-F_{K,\sigma}(d,d)\biggr)^2\biggr]\nonumber\\&
	+\displaystyle\sum_{n=0}^{N}\delta t\displaystyle\sum_{K|L}\frac{1}{4m(K|L)M}\biggl[\max_{u_L^{n+1}\leq c\leq d\leq u_K^{n+1}}\biggl(F_{K,\sigma}(d,c)-F_{K,\sigma}(c,c)\biggr)^2\biggr].
\end{align}
Recalling the equality $A_{Conv}=A_{Conv}^{ext}+A_{Conv}^{int,1}+A_{Conv}^{int,2}$, we find
\begin{align}\label{estiB}
&A_{Conv}\geq\displaystyle\sum_{n=0}^{N}\delta t\displaystyle\sum_{K|L}\frac{1}{4m(K|L)M}\biggl[\max_{u_L^{n+1}\leq c\leq d\leq u_K^{n+1}}\biggl(F_{K,\sigma}(d,c)-F_{K,\sigma}(d,d)\biggr)^2\biggr]\nonumber\\&
	+\displaystyle\sum_{n=0}^{N}\delta t\displaystyle\sum_{K|L}\frac{1}{4m(K|L)M}\biggl[\max_{u_L^{n+1}\leq c\leq d\leq u_K^{n+1}}\biggl(F_{K,\sigma}(d,c)-F_{K,\sigma}(c,c)\biggr)^2\biggr]-2C.
\end{align}
Set
\begin{align*}
\bar{A}_{Conv}&=\displaystyle\sum_{n=0}^{N}\delta t\displaystyle\sum_{K|L}\frac{1}{4m(K|L)M}\biggl[\max_{u_L^{n+1}\leq c\leq d\leq u_K^{n+1}}\biggl(F_{K,\sigma}(d,c)-F_{K,\sigma}(d,d)\biggr)^2\biggr]\nonumber\\&+\displaystyle\sum_{n=0}^{N}\delta t\displaystyle\sum_{(K,L)\in\mathlarger\varepsilon^{n+1}_{int}}\frac{1}{4m(K|L)M}\biggl[\max_{u_L^{n+1}\leq c\leq d\leq u_K^{n+1}}\biggl(F_{K,\sigma}(d,c)-F_{K,\sigma}(c,c)\biggr)^2\biggr].
\end{align*}
Now, as the equality $A_{Evol}+A_{Conv}+A_{Diff}=0$ holds and as \eqref{estiA} and \eqref{estiB} are satisfied, we have  $\bar{A}_{Conv}\leq C$.\\
Moreover, using the Cauchy-Schwarz inequality, we deduce 
\begin{align}\label{estiB}
&\displaystyle\sum_{n=0}^{N}\delta t\displaystyle\sum_{K|L}\biggl[\max_{u_L^{n+1}\leq c\leq d\leq u_K^{n+1}}\biggl(F_{K,\sigma}(d,c)-F_{K,\sigma}(d,d)\biggr)\biggr]\nonumber\\&
	+\displaystyle\sum_{n=0}^{N}\delta t\displaystyle\sum_{K|L}\biggl[\max_{u_L^{n+1}\leq c\leq d\leq u_K^{n+1}}\biggl(F_{K,\sigma}(d,c)-F_{K,\sigma}(c,c)\biggr)\biggr]\nonumber\\&\leq\frac{1}{4M}\sqrt{\displaystyle\sum_{n=0}^{N}\delta t\displaystyle\sum_{K|L}m(K|L)}\sqrt{\bar{A}_{Conv}}.
\end{align}
At the end, take into account the regularity on the mesh   \eqref{reg2} to deduce that:
\begin{align*}
\displaystyle\sum_{n=0}^N\delta t\displaystyle\sum_{K|L}m(K|L)\leq T\frac{1}{\alpha}h^{\ell-1}\displaystyle\sum_{K|L}1\leq T\frac{1}{\alpha}h^{\ell-1}m(\Omega)\frac{1}{\alpha}h^{-\ell}\leq\frac{C}{h}.
\end{align*}
\end{proof}
\begin{lem}\label{H1estimate}($L^2( 0,T,H^1(\Omega))$ Estimate) 
Suppose that \eqref{reg2},\eqref{monotony}-\eqref{consistency}  hold. Let $\uOdt$ be the approximate solution of problem $(P)$ defined by  \eqref{esti0}, \eqref{esti1}, \eqref{neumann}. Let $T>0$, and set $N=\max\{n\in\mathbb N, n<\frac{T}{\delta t}\}$. Then there exists $C=C(||f||_{L^\infty},u_{\max},T)\geq0$ such that% if $\delta t<T$,
\begin{align}\label{H1estima}
\frac{1}{2}\displaystyle\sum_{n=0}^N\delta t\displaystyle\sum_{K\in\mathcal{O}}\displaystyle\sum_{L\in\mathcal{N}(K)}\tau_{K|L}\biggl|\phi(u_{K}^{n+1})-\phi(u_{L}^{n+1})\biggr|^2	\leq C.
\end{align}
\end{lem}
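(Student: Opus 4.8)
The plan is to obtain the $L^2(0,T;H^1(\Omega))$ estimate by the standard energy method: multiply the scheme \eqref{esti1} by $\delta t\,\phi(u_K^{n+1})$ rather than by $\delta t\,u_K^{n+1}$, and sum over $K\in\mathcal{O}$ and $n=0,\dots,N$. This produces again a decomposition $B_{Evol}+B_{Conv}+B_{Diff}=0$. The diffusion term $B_{Diff}$ is the one we want: after a discrete integration by parts (reindexing the double sum over control volumes and their neighbours, using the symmetry $\tau_{K|L}=\tau_{L|K}$ and the conservativity \eqref{neumanndiffusion}), it equals $\sum_{n=0}^N\delta t\sum_{K|L}\tau_{K|L}\bigl(\phi(u_L^{n+1})-\phi(u_K^{n+1})\bigr)^2$, which is exactly twice the quantity to be bounded in \eqref{H1estima}. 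The boundary edges contribute nothing thanks to \eqref{neumanndiffusion}. So it remains to bound $B_{Evol}$ and $B_{Conv}$ from below by a constant depending only on $\|f\|_{L^\infty}$, $u_{\max}$, $T$ (and $|\Omega|$, $|\partial\Omega|$).

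For $B_{Evol}=\sum_{n=0}^N\sum_{K\in\mathcal{O}} m(K)(u_K^{n+1}-u_K^n)\phi(u_K^{n+1})$ I would use the convexity trick adapted to $\phi$: letting $\Theta$ be a primitive of $\phi$ (so $\Theta$ is convex, $\Theta'=\phi$), the inequality $(a-b)\phi(a)\geq\Theta(a)-\Theta(b)$ gives a telescoping lower bound $B_{Evol}\geq\sum_{K\in\mathcal{O}}m(K)\bigl(\Theta(u_K^{N+1})-\Theta(u_K^0)\bigr)\geq -C$, where $C$ depends only on $|\Omega|$, $u_{\max}$ and the Lipschitz norm of $\phi$ via the $L^\infty$ bound \eqref{linfini}. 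For the convective term $B_{Conv}=\sum_{n=0}^N\delta t\sum_{K\in\mathcal{O}}\sum_{\sigma\in\bar{\mathlarger\varepsilon}_K}F_{K,\sigma}(u_K^{n+1},u_{K,\sigma}^{n+1})\phi(u_K^{n+1})$ I would mimic the treatment of $A_{Conv}$ in Lemma \ref{weakbv}: introduce the function $\Psi^\phi_{K,L}(a)=\int_0^a \phi(s)\,\frac{d}{ds}F_{K,L}(s,s)\,ds$, use the identity analogous to \eqref{dF} to split $B_{Conv}^{int}$ into a term $\int_{u_L^{n+1}}^{u_K^{n+1}}\bigl(F_{K,L}(u_K^{n+1},u_L^{n+1})-F_{K,L}(s,s)\bigr)\phi'(s)\,ds$-type contribution — which is nonnegative by the monotonicity of $F_{K,L}$ and of $\phi$ — plus a telescoping term $-\sum_{K|L}\delta t\bigl(\Psi^\phi_{K,L}(u_K^{n+1})-\Psi^\phi_{K,L}(u_L^{n+1})\bigr)$ that, by the conservativity relation $\sum_{\sigma\in\bar{\mathlarger\varepsilon}_K}\Psi^\phi_{K,L}=0$, reduces to a boundary sum bounded by $C$; the genuinely boundary part $B_{Conv}^{ext}=-\sum_n\delta t\sum_K\phi(u_K^{n+1})\sum_{\sigma\in\mathlarger\varepsilon_K^{ext}}m(\sigma)f(u_K^{n+1})\cdot\eta_{K,\sigma}$ is controlled directly by $\|f\|_{L^\infty}\|\phi\|_{L^\infty}T|\partial\Omega|$.

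Putting these together, $0=B_{Evol}+B_{Conv}+B_{Diff}\geq -C+B_{Diff}^{telescoped\ part}$ — more precisely, $\tfrac12\sum_{n=0}^N\delta t\sum_{K}\sum_{L\in\mathcal{N}(K)}\tau_{K|L}|\phi(u_K^{n+1})-\phi(u_L^{n+1})|^2 = B_{Diff}\leq C$, which is \eqref{H1estima}. I expect the main technical obstacle to be the bookkeeping in the convective term: one must verify carefully that the "good" part coming from the $\int(F_{K,L}(u_K,u_L)-F_{K,L}(s,s))\phi'(s)ds$ pieces is indeed nonnegative (this uses both monotonicities of $F_{K,\sigma}$ together with $\phi'\geq 0$, exactly as in the derivation preceding \eqref{estiB}), and that the leftover $\Psi^\phi$-telescoping sum collapses onto $\mathlarger\varepsilon_K^{ext}$ only — the interior contributions cancelling by conservativity. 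Everything else is the routine convexity/telescoping pattern already used for $A_{Evol}$ and $A_{Conv}$, and no smallness or CFL-type condition on $\delta t$ is needed since the scheme is implicit.
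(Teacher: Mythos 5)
Your proposal is correct, and its skeleton coincides with the paper's: multiply \eqref{esti1} by $\delta t\,\phi(u_K^{n+1})$, sum over $K$ and $n$, split into $B_{Evol}+B_{Conv}+B_{Diff}=0$, bound $B_{Evol}$ from below by the telescoping/convexity argument with the primitive $\vartheta(r)=\int_0^r\phi(s)\,ds$, and identify $B_{Diff}$ with the quantity of \eqref{H1estima} by discrete summation by parts. The only genuine divergence is the convective term. The paper does \emph{not} redo the $\Psi$-machinery of Lemma \ref{weakbv}: it gathers $B_{Conv}$ by edges using conservativity, so each interior edge contributes $F_{K,L}(u_K^{n+1},u_L^{n+1})\bigl(\phi(u_L^{n+1})-\phi(u_K^{n+1})\bigr)$, and then applies the weighted Young inequality together with \eqref{borne}; the resulting term $\sum \frac{d_{K,L}}{2m(K|L)}F_{K,L}^2$ is bounded by $C\sum m(\widehat{K|L})\leq C\,T\,m(\Omega)$, while the term $\frac12\sum\tau_{K|L}|\phi(u_K^{n+1})-\phi(u_L^{n+1})|^2$ is absorbed by half of $B_{Diff}$ --- which is exactly where the factor $\frac12$ in \eqref{H1estima} comes from. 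Your route --- the identity \eqref{dF} with weight $\phi(s)$ in place of $s$, the nonnegativity of the $\int\bigl(F_{K,L}(u_K^{n+1},u_L^{n+1})-F_{K,L}(s,s)\bigr)\phi'(s)\,ds$ pieces (which indeed holds for either ordering of $u_K^{n+1},u_L^{n+1}$, by the same two monotonicities used before \eqref{estiB} together with $\phi'\geq0$), and the collapse of the $\Psi^\phi$-telescoping onto $\mathlarger\varepsilon_K^{ext}$ via \eqref{Kext} --- is also valid, and it buys the slightly stronger statement $B_{Conv}\geq -C$ with no absorption into $B_{Diff}$; the price is repeating essentially all the bookkeeping of Lemma \ref{weakbv}, where the paper's Young-inequality argument takes three lines. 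Two harmless bookkeeping remarks: with the convention that $\sum_{K\in\mathcal O}\sum_{L\in\mathcal N(K)}$ counts each edge twice, $B_{Diff}$ is the left-hand side of \eqref{H1estima} up to the factor $2$ you mention; and the constant you obtain also depends on $m(\Omega)$, $m(\partial\Omega)$ and $\|\phi\|_{L^\infty}$, exactly as the paper's does despite the statement of the lemma listing only $\|f\|_{L^\infty}$, $u_{\max}$, $T$.
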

\begin{proof}
Multiplying \eqref{esti1} by $\delta t \phi(u_K^{n+1})$ and summing over $K\in\mathcal{O}$ and $n=0,...,N$ yields $B_{Evol}+B_{Conv}+B_{Diff}=0$   with 
\begin{align*}
	B_{Evol}&=\displaystyle\sum_{n=0}^N\displaystyle\sum_{K\in\mathcal{O}}m(K)(u^{n+1}_K-u^{n}_K)\phi(u_K^{n+1}),\\
	B_{Diff}&=-\displaystyle\sum_{n=0}^N\delta t\displaystyle\sum_{K\in\mathcal{O}}\displaystyle\sum_{K|L}\tau_{K|L}\biggl(\phi(u_{L}^{n+1})-\phi(u_{K}^{n+1})\biggr) \phi(u_K^{n+1}),\\
	B_{Conv}&=\displaystyle\sum_{n=0}^N\delta t\displaystyle\sum_{K\in\mathcal{O}}\displaystyle\sum_{\sigma\in\mathlarger\varepsilon_K}F_{K,\sigma}(u_{K}^{n+1}\!,u_{K,\sigma}^{n+1})\phi(u_K^{n+1}).
\end{align*}
	Let $\vartheta(r)=\int_0^r\phi(s)ds$. From the convexity inequality, we have:
\begin{align}
B_{Evol}&\geq \displaystyle\sum_{n=0}^N\displaystyle\sum_{K\in\mathcal{O}}m(K)\biggl(\vartheta(u_K^{n+1})-\vartheta(u_K^{n})\biggr)\nonumber\\&=\displaystyle\sum_{K\in\mathcal{O}}m(K)\biggl(\vartheta(u_K^{N+1})-\vartheta(u_K^{0})\biggr).
\end{align}
Further, in the term $B_{Diff}$, for every edge $K|L$ the terms involving $K$ and $L$ appear twice. Thanks to the conservativity of the scheme, we find
\begin{align}\label{H1esti}
B_{Diff}&=-\displaystyle\sum_{n=0}^N\delta t\displaystyle\sum_{K\in\mathcal{O}}\displaystyle\sum_{L\in\mathcal{N}(K)}\tau_{K|L}\biggl|\phi(u_{K}^{n+1})-\phi(u_{L}^{n+1})\biggr|^2.	
\end{align}
The term $B_{Conv}$ can be rewritten as
\begin{align*}
B_{Conv}&=\displaystyle\sum_{n=0}^N\delta t\displaystyle\sum_{K\in\mathcal{O}}\displaystyle\sum_{L\in \mathcal{N}(K)}\delta tF_{K,K|L}(u_{K}^{n+1},u_{L}^{n+1})\biggl(\phi(u_L^{n+1})-\phi(u_K^{n+1})\biggr).\\
\end{align*}
Using the weighted Young inequality and \eqref{borne}, we deduce
\begin{align*}
|B_{Conv}|&\leq\displaystyle\sum_{n=0}^N\delta t\displaystyle\sum_{K\in\mathcal{O}}\displaystyle\sum_{L\in\mathcal{N}(K)}\frac{d_{K,L}}{2m(K|L)}\biggl(F_{K,\sigma}(u_{K}^{n+1},u_{L}^{n+1})\biggr)^2\\
&+\frac{1}{2}\displaystyle\sum_{n=0}^N\delta t\displaystyle\sum_{K\in\mathcal{O}}\displaystyle\sum_{L\in\mathcal{N}(K)}\frac{m(K|L)}{d_{K,L}}\biggl|\phi(u_{K}^{n+1})-\phi(u_{L}^{n+1})\biggr|^2\\
&\leq C\displaystyle\sum_{L|K}  m(\widehat{K|L})+\frac{1}{2}\displaystyle\sum_{n=0}^N\delta t\displaystyle\sum_{K\in\mathcal{O}}\displaystyle\sum_{L\in\mathcal{N}(K)}\tau_{K|L}\biggl|\phi(u_{K}^{n+1})-\phi(u_{L}^{n+1})\biggr|^2.
\end{align*}
Collecting the previous inequalities we readily deduce \eqref{H1estima}. This concludes the proof of the Lemma \ref{H1estimate}.
\end{proof} 
                          %%%%%%%%%%%%%%%%%%%%%%%%%%%%%%%%%%%%%%%%%%%%%%%%%%%%%%%%%%%%%%%%%%%%%%%%%%%%%%%%
\subsection{Estimates of space and time translates}
Recall the following result.
\begin{thm}\label{kolmogorov}(\normalfont{Riesz-Frechet-Kolmogorov})
Let an open $Q\subset\mathbb R^{\ell+1}$ and let $\omega\subset\subset Q$. Consider $\mathcal{K}$  a bounded set of $L^p$, with $1\leq p<\infty$. we suppose that: $\forall\epsilon>0$, there exists $\delta>0$, $\delta<dist(\omega,\mathbb R^{\ell+1}\backslash Q)$ such that $||f(x+h)-f(x)||_{L^p(\omega)}\leq\epsilon$ $\forall h\in\mathbb R^{\ell+1}$ with $|h|<\delta$ and $\forall f\in\mathcal{K}$. Then $\mathcal{K}$ is relatively compact in $L^p(\omega)$.  
\end{thm}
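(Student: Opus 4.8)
The plan is to establish the theorem by the classical mollification argument: one approximates the elements of $\mathcal K$ uniformly by families that are relatively compact for each fixed mollification scale, and then upgrades this to total boundedness of $\mathcal K$ itself in the complete space $L^p(\omega)$.

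First I would fix an intermediate open set $\omega'$ with $\omega\subset\subset\omega'\subset\subset Q$ and a standard mollifier $\rho_r(x)=r^{-(\ell+1)}\rho(x/r)$, where $\rho\in\mathcal C^\infty_c(B(0,1))$, $\rho\geq 0$ and $\int\rho=1$. For $f\in\mathcal K$ and $r<\mathrm{dist}(\omega',\mathbb R^{\ell+1}\setminus Q)$, the convolution $f_r:=\rho_r\ast f$ is well defined on a neighbourhood of $\overline\omega$ using only the values of $f$ inside $Q$. The first step is the uniform approximation estimate: writing $f_r(x)-f(x)=\int\rho_r(y)\bigl(f(x-y)-f(x)\bigr)\,dy$ and applying the generalized Minkowski inequality yields
\begin{align*}
\|f_r-f\|_{L^p(\omega)}\leq\sup_{|y|\leq r}\|f(\cdot-y)-f\|_{L^p(\omega)}.
\end{align*}
The equicontinuity-of-translates hypothesis then gives, for every $\epsilon>0$, a radius $r_0>0$ such that $\|f_r-f\|_{L^p(\omega)}\leq\epsilon$ for all $r<r_0$ and \emph{all} $f\in\mathcal K$ simultaneously.

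The second step is compactness for fixed $r$. H\"older's inequality gives the bound $\|f_r\|_{L^\infty(\omega)}\leq\|\rho_r\|_{L^{p'}}\sup_{g\in\mathcal K}\|g\|_{L^p(\omega')}$ and the modulus-of-continuity estimate
\begin{align*}
|f_r(x)-f_r(x')|\leq\|\rho_r(x-\cdot)-\rho_r(x'-\cdot)\|_{L^{p'}}\,\sup_{g\in\mathcal K}\|g\|_{L^p(\omega')},
\end{align*}
where the first factor tends to $0$ as $|x-x'|\to 0$ since $\rho_r$ is uniformly continuous. Hence $\{f_r:f\in\mathcal K\}$ is bounded and equicontinuous on $\overline\omega$, so by the Arzel\`a--Ascoli theorem it is relatively compact in $C(\overline\omega)$, and therefore, $\omega$ being bounded, relatively compact in $L^p(\omega)$.

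Finally, given $\epsilon>0$, choose $r$ as in the first step so that $\|f_r-f\|_{L^p(\omega)}\leq\epsilon/2$ for every $f\in\mathcal K$; by the second step the set $\{f_r:f\in\mathcal K\}$ admits a finite $\epsilon/2$--net $\{g_1,\dots,g_m\}$ in $L^p(\omega)$, and the triangle inequality shows that $\{g_1,\dots,g_m\}$ is then a finite $\epsilon$--net for $\mathcal K$. Thus $\mathcal K$ is totally bounded in the complete space $L^p(\omega)$, hence relatively compact, which is the claim. The point that requires care throughout is the bookkeeping of domains — the translate estimate is only available for shifts smaller than $\mathrm{dist}(\omega,\mathbb R^{\ell+1}\setminus Q)$, so the convolution must be performed using values on the slightly larger set $\omega'$ while all estimates are read off on $\omega$ — together with checking that each estimate (the approximation bound as well as the uniform bound and modulus of continuity for $f_r$) is uniform over the \emph{entire} family $\mathcal K$, since this uniformity is precisely what turns the pointwise-in-$f$ statements into total boundedness of $\mathcal K$.
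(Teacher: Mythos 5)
Your argument is correct, but there is nothing in the paper to compare it against: Theorem \ref{kolmogorov} is explicitly \emph{recalled} as a classical result (the Riesz--Fr\'echet--Kolmogorov criterion) and the paper gives no proof of it, simply invoking it later to get $L^2$ compactness of $\phi(\uOdt)$ from the translation estimates of Lemma \ref{TRANSLATION}. Your mollification argument --- uniform approximation of $f$ by $\rho_r\ast f$ via the Minkowski integral inequality and the equicontinuity-of-translates hypothesis, Arzel\`a--Ascoli compactness of the mollified family for fixed $r$, and the final total-boundedness/$\epsilon$-net step --- is the standard textbook proof (essentially that of Brezis \cite{BRE}), and your bookkeeping of the domains $\omega\subset\subset\omega'\subset\subset Q$ and of the uniformity over $\mathcal K$ is handled correctly.
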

Now, we derive estimates of space and time translates of the function $\phi(\uOdt)$ which imply that the sequence is relatively compact in $L^2(Q)$.\\
Notice that because $(\phi(\uOdt)_{\mathcal{O},\delta t}$ obey a uniform $L^\infty$ bound, the local compactness in $Q$ is enough to deduce the $L^2$ compactness. 
\begin{lem}\label{TRANSLATION}
 Let, $\uOdt$ be the approximate solution of problem $(P)$ defined by  \eqref{esti0}, \eqref{esti1}. There exists  a constant $C_1$ depending on $\Omega$, $T$, $|\phi|_{H_{\mathcal{O}}}$ that
\begin{align}\label{translate01}
\displaystyle\int_0^T\displaystyle\int_{\Omega_\eta}\biggl|\phi(\uOdt(t,x+\eta))-\phi(\uOdt(t,x))\biggr|^2 dxdt	\leq C_1|\eta|
\end{align}
for all $\eta\in\mathbb R^l,$ where $\Omega_\eta=\Bigl\{x\in\Omega, [x+\eta,x]\subset\Omega\Bigr\}$
and there exists  $C_2$ depending on $\Omega$, $T$, $\phi$, $f$ such that
\begin{align}\label{translate02}
\forall \tau>0,\;\;\displaystyle\int_0^{T-\tau}\displaystyle\int_{\Omega}\biggl|\phi(\uOdt(t+\tau,x))-\phi(\uOdt(t,x))\biggr|^2dxdt\leq C_2\tau%\sqrt{\tau}
\end{align}
for all $\tau\in(0,T)$.
\end{lem}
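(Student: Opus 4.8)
The strategy is the classical one for finite volume schemes: estimate translates by comparison with the discrete $H^1$ semi-norm controlled in Lemma~\ref{H1estimate}. For the space translates \eqref{translate01}, I would fix $\eta\in\mathbb R^\ell$ and write, for a.e.\ $t$, the difference $\phi(\uOdt(t,x+\eta))-\phi(\uOdt(t,x))$ as a telescoping sum of the jumps $\phi(u_L^{n+1})-\phi(u_K^{n+1})$ across the interfaces $K|L$ crossed by the segment $[x,x+\eta]$. Introducing for each interface $\sigma=K|L$ the indicator $\chi_\sigma(x,\eta)$ that equals $1$ when $[x,x+\eta]$ meets $\sigma$, one gets
\[
\bigl|\phi(\uOdt(t,x+\eta))-\phi(\uOdt(t,x))\bigr|\le\sum_{K|L}\chi_\sigma(x,\eta)\,\bigl|\phi(u_L^{n+1})-\phi(u_K^{n+1})\bigr|.
\]
Squaring, using Cauchy--Schwarz with the weights $\tau_{K|L}=m(K|L)/d_{K,L}$ (writing $1=\sqrt{\tau_{K|L}}\cdot 1/\sqrt{\tau_{K|L}}$ inside each term), and integrating in $x$ over $\Omega_\eta$, one uses the standard geometric estimate $\int_{\Omega_\eta}\chi_\sigma(x,\eta)\,dx\le m(\sigma)\,|\eta|\,|n_{K,\sigma}\!\cdot\!\eta|/|\eta|\le m(\sigma)|\eta|$ (indeed the measure of the set of base points $x$ whose segment crosses $\sigma$ is at most $m(\sigma)$ times the length of the projection of $\eta$ onto $n_{K,\sigma}$). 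After summing over $K|L$ the $m(\sigma)/\tau_{K|L}=d_{K,\sigma}\le\mathrm{Diam}(K)$ factors are absorbed, and what remains is $|\eta|$ times $\sum_{K|L}\tau_{K|L}|\phi(u_K^{n+1})-\phi(u_L^{n+1})|^2$. Multiplying by $\delta t$, summing over $n=0,\dots,N$, and invoking \eqref{H1estima} gives \eqref{translate01} with $C_1$ depending only on $\Omega$, $T$ and the bound $C$ of Lemma~\ref{H1estimate}, i.e.\ on $|\phi(\uOdt)|_{H_\mathcal{O}}$.

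For the time translates \eqref{translate02}, fix $\tau\in(0,T)$ and set $u_K^{n,\tau}$ to be the value at time level corresponding to $t+\tau$. Since $\phi$ is Lipschitz, $|\phi(\uOdt(t+\tau,x))-\phi(\uOdt(t,x))|^2\le L_\phi\,(\phi(\uOdt(t+\tau,x))-\phi(\uOdt(t,x)))\,(\uOdt(t+\tau,x)-\uOdt(t,x))$, so it suffices to bound
\[
\int_0^{T-\tau}\!\!\int_\Omega\bigl(\phi(\uOdt(t+\tau,x))-\phi(\uOdt(t,x))\bigr)\bigl(\uOdt(t+\tau,x)-\uOdt(t,x)\bigr)\,dx\,dt .
\]
The integrand, at fixed $t$ in the cell $K$, equals $(\phi(u_K^{m+1})-\phi(u_K^{n+1}))(u_K^{m+1}-u_K^{n+1})$ where $n\delta t<t\le(n+1)\delta t$ and $m$ is the corresponding shifted index; writing $u_K^{m+1}-u_K^{n+1}=\sum_{j} (u_K^{j+1}-u_K^{j})$ over the intermediate time levels and using the scheme \eqref{esti1} to replace each $m(K)(u_K^{j+1}-u_K^j)$ by $-\delta t\bigl(\sum_\sigma F_{K,\sigma}-\sum_\sigma\tau_{K,\sigma}(\phi(u_{K,\sigma}^{j+1})-\phi(u_K^{j+1}))\bigr)$, one re-sums by parts over the interfaces. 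The convective part is handled by \eqref{borne} together with the already-established bounds, and the diffusive part is controlled by Cauchy--Schwarz against \eqref{H1estima} again; the number of intermediate levels is $O(\tau/\delta t)$ but each carries a factor $\delta t$, so the total is $O(\tau)$. This is the route used in Michel--Vovelle \cite{VOV}; the dependence of $C_2$ on $\Omega$, $T$, $\phi$, $f$ comes out of these estimates.

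The main obstacle is the time-translate estimate: unlike the space-translate case, which is essentially a one-line geometric computation plus \eqref{H1estima}, the time estimate requires carefully organizing the double summation (over the $O(\tau/\delta t)$ intermediate time slices and over interfaces) so that one genuinely recovers a clean $O(\tau)$ bound independent of $\delta t$ and $h$, and in particular one must be careful that the contribution of the boundary edges $\sigma\in\mathlarger\varepsilon_K^{ext}$ — where by \eqref{neumann}--\eqref{neumanndiffusion} the fluxes vanish — does not spoil the summation by parts. Once the bookkeeping is set up, the estimates themselves are routine applications of \eqref{borne}, Cauchy--Schwarz and Lemma~\ref{H1estimate}, and the relative compactness of $(\phi(\uOdt))$ in $L^2(Q)$ then follows from Theorem~\ref{kolmogorov} together with the uniform $L^\infty$ bound \eqref{linfini}.
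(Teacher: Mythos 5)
Your proposal follows essentially the same route as the paper: for \eqref{translate01} you telescope the difference across the interfaces crossed by $[x,x+\eta]$ and control the result with the discrete $H^1$ bound of Lemma~\ref{H1estimate}, and for \eqref{translate02} you reduce, via the Lipschitz continuity of $\phi$, to the product $\bigl(\phi(\uOdt(t+\tau,\cdot))-\phi(\uOdt(t,\cdot))\bigr)\bigl(\uOdt(t+\tau,\cdot)-\uOdt(t,\cdot)\bigr)$, substitute the scheme \eqref{esti1}, re-sum by edges and invoke \eqref{borne} and \eqref{H1estima}, exactly as in the paper (which follows Michel--Vovelle). The only cosmetic difference is in the space estimate, where you square and apply Cauchy--Schwarz with the weights $d_{K,L}$ directly, whereas the paper first derives the $L^1$ translate bound and then upgrades it to $L^2$ via $\|g\|^2_{L^2}\leq 2\|\phi\|_{L^\infty}\|g\|_{L^1}$; both give \eqref{translate01}.
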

\begin{proof}
$\bullet$  First, we prove \eqref{translate01}\\
Let $\eta\in\mathbb R^l$ with $\eta\neq0$ and set $\Omega_\eta=\Bigl\{x\in\Omega, [x+\eta,x]\subset\Omega\Bigr\}$. For all $K\in\mathcal{O}$ and $\sigma\in\mathlarger\varepsilon_K$, define $\chi_\sigma:\Omega_\eta\times\Omega_\eta\longrightarrow\{0,1\}$ by $\chi_\sigma(x,y)=1$ if $[x,y]\cap\sigma\neq\emptyset$  else $\chi_\sigma(x,y)=0$.  One has
\begin{align}\label{space1}
\biggl|\phi(\uOdt(t,x+\eta))-\phi(\uOdt(t,x))\biggr|\leq\displaystyle\sum_{L\in\mathcal{N}(K)}\chi_{K|L}(x,x+\eta)|D_{K|L}\phi|;\;\mbox{ for a.e. } x\in\Omega_\eta
\end{align}
where $D_{K|L}\phi$ is defined as
\begin{align*}
D_{K|L}\phi=|\phi(u^{n+1}_K)-\phi(u^{n+1}_L)|.
\end{align*}
We integrate \eqref{space1} over $\Omega_\eta$, and get:
\begin{align}\label{space2}
\displaystyle\int_{\Omega_\eta}\biggl|\phi(\uOdt(t,x+\eta))-\phi(\uOdt(t,x))\biggr|dx\leq\displaystyle\sum_{L\in\mathcal{N}(K)}\displaystyle\int_{\Omega_\eta}\chi_{K|L}(x,x+\eta)|D_{K|L}\phi|dx.
\end{align}
Remark that, for all $\sigma=K|L\in\mathlarger\varepsilon_K$, $\int_{\Omega_\eta}\chi_{K|L}(x,x+\eta)dx$ is the measure of the set of points of $\Omega$ which are located inside the cylinder whose basis is $K|L$ and generator vector is $-\eta$. Thus
\begin{align}
\int_{\Omega_\eta}\chi_{K|L}(x,x+\eta)dx\leq m(K|L)|\eta|.
\end{align} 
 The relation \eqref{space2} gives
\begin{align}\label{space3}
\displaystyle\int_{\Omega_\eta}\biggl|\phi(\uOdt(t,x+\eta))-\phi(\uOdt(t,x))\biggr|dx&\leq|\eta|\displaystyle\sum_{L\in\mathcal{N}(K)}m(K|L)|D_{K|L}\phi|\nonumber\\
&\leq |\eta|\displaystyle\sum_{L\in\mathcal{N}(K)}m(K|L)|D_{K|L}\phi|.
\end{align}
From now, integrate \eqref{space3} over $[0,T]$
\begin{align}\label{space4}
\displaystyle\int_0^T\displaystyle\int_{\Omega_\eta}\biggl|\phi(\uOdt(t,x+\eta))-\phi(\uOdt(t,x))\biggr|dxdt&\leq|\eta|\sum_{n=0}^{N}\delta t\displaystyle\sum_{L\in\mathcal{N}(K)}m(K|L)d_{K,L}\biggl|\frac{D_{K|L}\phi}{d_{K,L}}\biggr|.
\end{align}
Remark that:
\begin{align}\label{space5}
\biggl|\biggl|\phi(\uOdt(t,x+\eta))-\phi(\uOdt(t,x))\biggr|\biggr|_{L^2(Q_\eta)}^2&\leq2||\phi||_{L^\infty}\biggl|\biggl|\phi(\uOdt(t,x+\eta))-\phi(\uOdt(t,x))\biggr|\biggr|_{L^1(Q_\eta)}.
\end{align}
Then \eqref{space4} and \eqref{space5} give
\begin{align}\label{space5}
\displaystyle\int_0^T\displaystyle\int_{\Omega_\eta}\biggl|\phi(\uOdt(t,x+\eta))-\phi(\uOdt(t,x))\biggr|^2dxdt\leq C_1|\eta|.
\end{align}
%with $C_1=\sqrt{T|\Omega_\eta|}|\phi|_{H_{\mathcal{O}}}$\\.% where $|.|_{H_{\mathcal{O}}}$ is  the $H^1_0$ discrete semi norms defined by \eqref{H1seminorm}.\\
$\bullet$ Finally,  we prove \eqref{translate02}.\\
Let $\tau\in(0,T)$ and $t\in(0,t-\tau)$. Set $n_0=[t/\delta t]$ and $n_1=[(t+\tau)/\delta t]$, let 
\begin{align*}
I^{t,\tau}&=\{n\in\mathbb N, \mbox{ such that } t<(n+1)\delta t\leq t+\tau \}\\
J^{t,\tau}&=\{n\in\mathbb N, \mbox{ such that } (n+1)\delta t-\tau\leq t< (n+1)\delta t\}.
\end{align*}
Since $\phi$ is Locally continuous with constant $\phi_{Lip}=\displaystyle\sup_{0<a< b\leq u_{\max}}\frac{\phi(a)-\phi(b)}{a-b}$, one has:
\begin{align}
\displaystyle\int_0^{T-\tau}\displaystyle\int_{\Omega}\biggl|\phi(\uOdt(t+\tau,x))-\phi(\uOdt(t,x))\biggr|^2	dxdt\leq \phi_{Lip}\displaystyle\int_0^{T-\tau}S(t)dt
\end{align} 
where, for almost every $t\in(0,T-\tau)$
\begin{align*}
S(t)&=\displaystyle\int_{\Omega}\biggl(\phi(\uOdt(t+\tau,x))-\phi(\uOdt(t,x))\biggr)\biggl(\uOdt(t+\tau,x)-\uOdt(t,x))\biggr)dx	dxdt\\
&=\displaystyle\sum_{K\in\mathcal{O}}m(K)\biggl(\phi(u_K^{n_1})-\phi(u_K^{n_0})\biggr)\biggl(u_K^{n_1}-u_K^{n_0}\biggr)\\
&=\displaystyle\sum_{K\in\mathcal{O}}\biggl(\phi(u_K^{n_1})-\phi(u_K^{n_0})\biggr)\displaystyle\sum_{I^{t,\tau}}m(K)\biggl(u_K^{n+1}-u_K^{n}\biggr).
\end{align*} 
Use \eqref{esti1} and gather by edges. We get
\begin{align*}
S(t)&=\displaystyle\sum_{K\in\mathcal{O}}\biggl(\phi(u_K^{n_1})-\phi(u_K^{n_0})\biggr)\displaystyle\sum_{I^{t,\tau}}\delta t\biggl[\displaystyle\sum_{L\in\mathcal{N}(K)}\frac{m(K|L)}{d_{K,L}}\biggl(\phi(u_{L}^{n+1})-\phi(u_{K}^{n+1})\biggr)-F_{K,L}(u_{K}^{n+1}\!,u_{L}^{n+1})\biggr]\\
&=\displaystyle\sum_{I^{t,\tau}}\delta t\displaystyle\sum_{K|L}\frac{m(K|L)}{d_{K,L}}\biggl(\phi(u_K^{n_1})-\phi(u_L^{n_1})-\phi(u_K^{n_0})+\phi(u_L^{n_0})\biggr)\biggl(\phi(u_{L}^{n+1})-\phi(u_{K}^{n+1})\biggr)\\&+\displaystyle\sum_{I^{t,\tau}}\delta t\displaystyle\sum_{K|L}\biggl(\phi(u_K^{n_1})-\phi(u_L^{n_1})-\phi(u_K^{n_0})+\phi(u_L^{n_0})\biggr)F_{K,L}(u_{L}^{n+1}\!,u_{K}^{n+1}).
\end{align*} 
We can then use the inequality $2ab\leq a^2+b^2$. We get $S(t)\leq \frac{1}{2}S_0(t)+\frac{1}{2}S_1(t)+S_2(t)+S_3(t)+S_4$ with:
\begin{align*}
S_0(t)&=\displaystyle\sum_{I^{t,\tau}}\delta t\displaystyle\sum_{K|L}\frac{m(K|L)}{d_{K,L}}\biggl(\phi(u_K^{n_0})-\phi(u_L^{n_0})\biggr)^2\\
S_1(t)&=\displaystyle\sum_{I^{t,\tau}}\delta t\displaystyle\sum_{K|L}\frac{m(K|L)}{d_{K,L}}\biggl(\phi(u_K^{n_1})-\phi(u_L^{n_1})\biggr)^2\\
S_2(t)&=\displaystyle\sum_{I^{t,\tau}}\delta t\displaystyle\sum_{K|L}\frac{m(K|L)}{d_{K,L}}\biggl(\phi(u_K^{n+1})-\phi(u_L^{n+1})\biggr)^2\\
S_3(t)&=\displaystyle\sum_{I^{t,\tau}}\delta t\displaystyle\sum_{K|L}\biggl(\phi(u_K^{n_1})-\phi(u_L^{n_1})\biggr)F_{K,L}(u_{L}^{n+1}\!,u_{K}^{n+1})\\
S_4(t)&=\displaystyle\sum_{I^{t,\tau}}\delta t\displaystyle\sum_{K|L}\biggl(\phi(u_L^{n_0})-\phi(u_K^{n_0})\biggr)F_{K,L}(u_{L}^{n+1}\!,u_{K}^{n+1}).
\end{align*} 
We introduce the function $\chi^t$ such that $\chi^t(1)=1$ and $\chi^t(0)=0$. We have, for all $t\in\mathbb R^+$ and $n\in\mathbb N$, $\chi^t(I^{t,\tau})=\chi^t(J^{t,\tau})$. Therefore
\begin{align}
\displaystyle\int_0^{T-\tau}S_0(t)dt&\leq\displaystyle\sum_{n=0}^{[T/\delta t]}\delta t\displaystyle\sum_{K|L}\frac{m(K|L)}{d_{K,L}}\biggl(\phi(u_K^{n_0})-\phi(u_L^{n_0})\biggr)^2\displaystyle\int_{n_0\delta t}^{(n_0+n)\delta t+\tau}\displaystyle\sum_{n\in\mathbb N}\chi^t(I^{t,\tau})dt\nonumber\\&
\leq\displaystyle\sum_{n=0}^{[T/\delta t]}\delta t\displaystyle\sum_{K|L}m(K|L)d_{K,L}\biggl|\frac{\phi(u_K^{n_0})-\phi(u_L^{n_0})}{d_{K,L}}\biggr|^2\displaystyle\int_{n_0\delta t}^{(n_0+n)\delta t+\tau}\displaystyle\sum_{n\in\mathbb N}\chi^t(I^{t,\tau})dt\nonumber\\&
\leq\displaystyle\sum_{n=0}^{[T/\delta t]}\delta t\displaystyle\sum_{K|L}\ell m(\widehat{K|L})\biggl|\frac{\phi(u_K^{n_0})-\phi(u_L^{n_0})}{d_{K,L}}\biggr|^2\displaystyle\int_{n_0\delta t}^{(n_0+n)\delta t+\tau}\displaystyle\sum_{n\in\mathbb N}\chi^t(I^{t,\tau})dt.
\end{align}
Notice the following  property:
\begin{align} 
\displaystyle\int_{n_0\delta t}^{(n_0+1)\delta t}\displaystyle\sum_{n\in\mathbb N}\chi^t(J^{t,\tau})dt=\displaystyle\sum_{n\in\mathbb N}\displaystyle\int_{(n_0-n-1)\delta t+\tau}^{(n_0-n)\delta t+\tau}\chi^t(0\leq t<\tau)dt=\tau.
\end{align}
Using \eqref{H1estima}, we find:
\begin{align}\label{Ttrans0}
\displaystyle\int_0^{T-\tau}S_0(t)dt\leq C\tau.                                              
\end{align}
We get in the same way
\begin{align}\label{Ttrans1}
\displaystyle\int_0^{T-\tau}S_1(t)dt\leq C\tau.
\end{align}
We now turn to the study of the third term:
\begin{align*}
\displaystyle\int_0^{T-\tau}S_2(t)dt&\leq\displaystyle\sum_{n=0}^{[T/\delta t]}\delta t\displaystyle\sum_{K|L}\frac{m(K|L)}{d_{K,L}}\biggl(\phi(u_K^{n+1})-\phi(u_L^{n+1})\biggr)^2\displaystyle\int_0^{T-\tau}\chi^t(J^{t,\tau})dt\nonumber\\&
\leq\displaystyle\sum_{n=0}^{[T/\delta t]}\delta t\displaystyle\sum_{K|L}\ell m(\widehat{K|L})\biggl|\frac{\phi(u_K^{n+1})-\phi(u_L^{n+1})}{d_{K,L}}\biggr|^2\displaystyle\int_{n_0\delta t}^{(n_0+n)\delta t+\tau}\displaystyle\sum_{n\in\mathbb N}\chi^t(J^{t,\tau})dt.
\end{align*}
Because
\begin{align*}
\displaystyle\int_0^{T-\tau}\chi^t(J^{t,\tau})dt=\min(T-\tau,(n+1)\delta t)-\max(0,(n+1)\delta t-\tau)\leq \tau,
\end{align*}
we get 
\begin{align}\label{Ttrans2}
\displaystyle\int_0^{T-\tau}S_2(t)dt\leq C\tau.
\end{align}
Recall that due to \eqref{borne}
\begin{align}\label{diam}
\biggl|\frac{F_{K,L}(a,b)}{m(K|L)}\biggr|\leq(||f||_{L^\infty}+M).
\end{align}
We have in the same way
\begin{align}\label{Ttrans3}
\displaystyle\int_0^{T-\tau}S_3(t)dt&\leq\displaystyle\sum_{n=0}^{[T/\delta t]}\delta t\displaystyle\sum_{K|L}\biggl(\phi(u_K^{n_1})-\phi(u_L^{n_1})\biggr)F_{K,L}(u_{L}^{n+1}\!,u_{K}^{n+1})\displaystyle\int_0^{T-\tau}\chi^t(J^{t,\tau})dt\nonumber\\
&\leq\displaystyle\sum_{n=0}^{[T/\delta t]}\;\delta t\displaystyle\;\sum_{K|L}m(K|L)d_{K,L}\;\biggl|\frac{\phi(u_K^{n_1})-\phi(u_L^{n_1})}{d_{K,L}}\biggr|\biggl|\frac{F_{K,L}(u_{L}^{n+1}\!,u_{K}^{n+1})}{m(K|L)}\biggr|\tau\nonumber\\
&\leq\biggl((||f||_{L^\infty}+M)\displaystyle\sqrt{Tm(\Omega)}|\phi(u)|_{H_{\mathcal{O}}}\biggr)\tau.
\end{align}
In the same way we prove:
\begin{align}\label{Ttrans4}
\displaystyle\int_0^{T-\tau}S_4(t)dt\leq\biggl((||f||_{L^\infty}+M)\sqrt{Tm(\Omega)}|\phi(u)|_{H_{\mathcal{O}}}\biggr)\tau.
\end{align}
From \eqref{Ttrans0}, \eqref{Ttrans1}, \eqref{Ttrans2}, \eqref{Ttrans3} and \eqref{Ttrans4}, we get:
\begin{align}
\displaystyle\int_0^{T-\tau}\displaystyle\int_{\Omega}|\phi(\uOdt(t+\tau,x))-\phi(\uOdt(t,x))|^2dxdt\leq C_2\tau.
\end{align}
\end{proof}
                                     %%%%%%%%%%%%%%%%%%%%%%%%%%%%%%%%%%%%%%%%%%%%%%%%%%%%%%%%%%%%%%%%%%%%%%%%%%%%%%%%%%%%%
\subsection{Existence of a discrete solution}
The proof of existence for the scheme \eqref{esti0}, \eqref{esti1} is obtained by applying the Leray-Schauder topological degree theorem. The idea is to modify continuously the scheme to obtain a system which admits a solution and if the modification preserves in the same time the estimates (in our case this can get easily by the $L^\infty$ norm on $u_{\mathcal{O},\delta t}$), then the scheme also has a solution.
\begin{defn}\label{Leray} 
Let $E$ be a real Banach space. We denote by $\mathcal{A}$ the set of $(Id-g,B,y)$ where $g: \bar{B}\longrightarrow E$ is a compact with $B\subset E$ and $y\in E$ such that $y\notin\{g(x),x\in\partial B\}$. 
\end{defn}
%\begin{thm}\label{Schauder}(Leray, Schauder)
%Let $E$ a real Banach space. Consider $\mathcal{A}$ in the sense of Definition \ref{Leray}. There exist an application  $degree:\mathcal{A}\longrightarrow\mathbb Z$  called topological degree, which satisfies:\\
%$\bullet$ $degree(Id,B,y)=1$ if $y\in B$.\\
%$\bullet$ $degree(Id-g,B,y)=degree(Id,B_1,y)+degree(Id,B_2,y)$ if $B_1\cup B_2\subset B$, $B_1\cap B_2=\emptyset$ and $y\notin\{x-g(x),x\in\bar{B}\backslash B_1\cup B_2\}$.\\
%$\bullet$ If $h:[0,1]\times\bar{B}\longrightarrow E$ is compact, $y\in \mathcal{C}([0,1];E)$ and $y(\lambda)\notin\{x-h(t,x),x\in B\}$ ( for all $\lambda\in[0,1]$), then $degree(Id-h(\lambda,.),B,y(\lambda))=degree(Id-h(0,.),B,y(0))$ for all $\lambda\in[0,1]$.
%\end{thm}
\begin{lem}
Suppose \eqref{f} is satisfied. Then for all $K\in\mathcal{O}$, there exist $u_K^{n+1}$ satisfying \eqref{entropydiscrete}. 
\end{lem}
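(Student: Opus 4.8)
The plan is to obtain $(u_K^{n+1})_{K\in\mathcal O}$ by induction on $n$, applying the Leray--Schauder topological degree of Definition \ref{Leray} in the finite-dimensional space $E=\mathbb R^{|\mathcal O|}$. Since any solution of the scheme \eqref{esti1} satisfies the discrete entropy inequalities \eqref{entropydiscrete} (as established above), it suffices to produce, for a given datum $(u_K^n)_{K\in\mathcal O}\in[0,u_{\max}]^{|\mathcal O|}$, a solution $(u_K^{n+1})_{K\in\mathcal O}$ of \eqref{esti1}. First I would extend $f$ and $\phi$ to globally Lipschitz functions on $\mathbb R$ (constant outside $[0,u_{\max}]$) and each $F_{K,\sigma}$ to a monotone Lipschitz function on $\mathbb R^2$ still satisfying \eqref{monotony}--\eqref{conservativity}; consistency \eqref{consistency} is only used on $[0,u_{\max}]$. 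With these extensions I rewrite \eqref{esti1} as a fixed point problem $v=g_1(v)$, where for $\lambda\in[0,1]$ and $K\in\mathcal O$
\begin{equation*}
g_\lambda(v)_K = u_K^n - \lambda\,\frac{\delta t}{m(K)}\Biggl[\,\sum_{\sigma\in\mathlarger\varepsilon_K}F_{K,\sigma}(v_K,v_{K,\sigma}) - \sum_{\sigma\in\mathlarger\varepsilon_K}\tau_{K,\sigma}\bigl(\phi(v_{K,\sigma})-\phi(v_K)\bigr)\Biggr].
\end{equation*}
Thus $g_0$ is the constant map $v\mapsto (u_K^n)_K$ and $g_1$ encodes the scheme; each $g_\lambda\colon E\to E$ is continuous, hence compact on bounded sets since $\dim E<\infty$, so each triple $(\mathrm{id}-g_\lambda,B,0)$ is admissible in the sense of Definition \ref{Leray} as soon as $0\notin(\mathrm{id}-g_\lambda)(\partial B)$.

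Next I would establish a uniform a priori bound on the fixed points. If $v$ is a fixed point of $g_\lambda$, then $v$ solves the scheme obtained from \eqref{esti1} by multiplying the convection and diffusion terms by $\lambda$. Repeating verbatim the argument of Proposition \ref{exi} --- subtract the identity satisfied by a constant $k$ (as in \eqref{esti5}, now carrying the same factor $\lambda$), multiply by $(\eta_k^\pm)'(v_K)$, use the convexity inequality, the monotonicity of the $F_{K,\sigma}$ and of $\phi$, sum over $K\in\mathcal O$, and use conservativity together with \eqref{f} for $k=u_{\max}$ and then $k=0$ --- gives
\begin{equation*}
\sum_{K\in\mathcal O}\frac{m(K)}{\delta t}\Bigl((v_K-u_{\max})^+-(u_K^n-u_{\max})^+\Bigr)\le 0
\end{equation*}
and the symmetric lower bound, whence $0\le v_K\le u_{\max}$ for all $K$ since $0\le u_K^n\le u_{\max}$. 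The nonnegative factor $\lambda$ affects none of the signs, the $\lambda$-scaled interior fluxes still cancel by conservativity, and the boundary contribution still vanishes by \eqref{f}. Consequently every fixed point of every $g_\lambda$ lies in $[0,u_{\max}]^{|\mathcal O|}$, so for $R:=1+u_{\max}\sqrt{|\mathcal O|}$ there is no fixed point of any $g_\lambda$ on $\partial B(0,R)$, while $(u_K^n)_K\in B(0,R)$.

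Finally I would invoke degree theory. The map $\mathrm{id}-g_0$ is the translation $v\mapsto v-(u_K^n)_K$ and $(u_K^n)_K\in B(0,R)$, so $\deg(\mathrm{id}-g_0,B(0,R),0)=1$; by continuity of $(\lambda,v)\mapsto g_\lambda(v)$ and homotopy invariance of the Leray--Schauder degree, $\deg(\mathrm{id}-g_1,B(0,R),0)=1\ne 0$, hence $g_1$ has a fixed point $(u_K^{n+1})_K$, which solves \eqref{esti1} and satisfies $0\le u_K^{n+1}\le u_{\max}$. Since $u_K^0$ defined by \eqref{esti0} lies in $[0,u_{\max}]$ (because $u_0$ does), induction on $n$ produces the whole family, and each $u_K^{n+1}$ then satisfies \eqref{entropydiscrete}. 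The only genuinely delicate point is checking that the $L^\infty$ estimate of Proposition \ref{exi} is preserved along the homotopy --- that inserting $\lambda\ge0$ in front of the convection and diffusion terms does not destroy the cancellations --- which is exactly why the extensions of $f$, $\phi$ and the $F_{K,\sigma}$ must be kept monotone; the remaining verifications are routine.
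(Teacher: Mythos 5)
Your proposal is correct and follows essentially the same route as the paper: a Leray--Schauder degree argument with a homotopy to a trivially solvable map, combined with the observation that the $L^\infty$ bound of Proposition \ref{exi} persists along the homotopy (conservativity still cancels the interior flux sums and \eqref{f} kills the boundary terms), so no fixed point can sit on the boundary of a ball of radius larger than $u_{\max}$. The only differences are cosmetic --- the paper scales the whole right-hand side including the datum by the homotopy parameter so that the $\alpha=0$ problem has the zero solution, whereas you scale only the flux terms so that the $\lambda=0$ map is a translation by $u^n$ --- and you are somewhat more explicit about the monotone extensions of $f$, $\phi$, $F_{K,\sigma}$ and the induction on $n$, which the paper leaves implicit.
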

\begin{proof}
For the proof, we consider for every $\alpha\in[0,1]$ the following problem:
\begin{equation*}
\left\{\begin{array}{rll}
&v_K-\alpha\biggl[u_K^0-\frac{\delta t}{m(K)}\displaystyle\sum_{\sigma\in\mathlarger\varepsilon_K}F_{K,\sigma}(v_{K}\!,v_{K,\sigma})+\frac{\delta t}{m(K)}\displaystyle\sum_{\sigma\in\mathlarger\varepsilon_K}\tau_{K,\sigma}\biggl(\phi(v_{K,\sigma})-\phi(v_{K})\biggr)\biggr]=0\\
&\forall K\in\mathcal{O},
\end{array}\right.
\end{equation*}
with notation analogous to that of \eqref{esti1}.\\
We consider the continuous function $\mathcal{F}$ with respect to each of its variables defined by:
\begin{align}
\mathcal{F}(\alpha,v)=v_K-\alpha\biggl[u_K^0-\frac{\delta t}{m(K)}\displaystyle\sum_{\sigma\in\bar{\mathlarger\varepsilon}_K}F_{K,\sigma}(v_{K}\!,v_{K,\sigma})+\frac{\delta t}{m(K)}\displaystyle\sum_{\sigma\in\bar{\mathlarger\varepsilon}_K}\tau_{K,\sigma}\biggl(\phi(v_{K,\sigma})-\phi(v_{K})\biggr)\biggr].
\end{align}
The function $\mathcal{F}(\alpha,.)$ is a continuous homotopy between $\mathcal{F}(0,.)$ and $\mathcal{F}(1,.)$. First, remark that $u^{n+1}_K=0$ is solution of $\mathcal{F}(0,u^{n+1}_K)=0$ for all $(n,K)\in[0,N]\times\mathcal{O}$. If $B$ is a ball with a sufficiently large radius in the space of solution of the system, the equation $\mathcal{F}(.,.)=0$ has no solution on the boundary  $\partial B$. Indeed replacing $u_0$, $f$, $\phi$ by $\alpha u_0$, $\alpha f$, $\alpha\phi$ we can apply the argument of Proposition \ref{exi} to solutions of equation $\mathcal{F}(\alpha,v)=0$.  Then it is enough to supply the finite dimensional set $\mathbb R^\theta$ of discrete functions by the norm $||\cdot||_{L^\infty}$ and take $B$ of radius larger than $u_{\max}$. Therefore%,% we can apply  Theorem \ref{Schauder}. We get:
\begin{align}
degree(\mathcal{F}(0,.),B)=degree(\mathcal{F}(1,.),B)\neq 0.
\end{align}
Thus there exists at least a solution to equation $\mathcal{F}(1,.)=0$. This solution is a solution to our scheme.  
\end{proof}
%%%%%%%%%%%%%%%%%%%%%%%%%%%%%%%%%%%%%%%%%%%%%%%%%%%%%%%%%%%%%%%%%%%%%%%%%%%%%%%%%%%%%%%%%%%%%%%%%%%%%%%%%%%%%%%%%%%%%%%%%%%%%%%%%%%%%%%%%%%%%%%%%%%%%%%%%%%%%%%%%%%%
\section{Continuous entropy inequality}
We prove in this section that the approximate solutions fulfill a continuous entropy inequality in the sense of Theorem \ref{continousentropy} below. Before, we recall a result that will serve us in the proof of this Theorem. 
\begin{lem}\label{magique}( see e.g. \normalfont{J ~.Droniou \cite{Droniou}})
Let $K$ be a non empty open convex polygonal set in $\mathbb R^\ell$. For $\sigma\in\bar{\mathlarger\varepsilon}_K$, we denote by $x_\sigma$ the center of gravity of $\sigma$; we also denote by $n_{K,\sigma}$ the unit normal vector to $\sigma$ outward to $K$. Then, for all vector $\vec{V}\in\mathbb R^\ell$ and for all point $x_K\in K$, we have:
\begin{align}\label{magi0}
m(K)\vec{V}=\displaystyle\sum_{\sigma\in\bar{\mathlarger\varepsilon}_K}m(\sigma)\vec{V}.n_{K,\sigma}(x_\sigma-x_K).
\end{align}
\end{lem}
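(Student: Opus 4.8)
The plan is to deduce \eqref{magi0} from the divergence theorem on the convex polygon $K$, reducing the vector identity to scalar ones by pairing it with an arbitrary direction $\vec{W}\in\mathbb R^\ell$. Fix such a $\vec{W}$ and introduce the vector field $G(x)=\bigl((x-x_K)\cdot\vec{W}\bigr)\vec{V}$ on $\overline{K}$. Since $\vec{V}$ is constant, $\div G(x)=\vec{V}\cdot\nabla\bigl((x-x_K)\cdot\vec{W}\bigr)=\vec{V}\cdot\vec{W}$, a constant, so $\int_K\div G\,dx=m(K)\,\vec{V}\cdot\vec{W}$. On the other hand $K$, being a convex polygon, has a Lipschitz boundary $\partial K=\bigcup_{\sigma\in\bar{\mathlarger\varepsilon}_K}\sigma$ on which the outward unit normal is constant on the relative interior of each face and equal to $n_{K,\sigma}$; hence the divergence theorem gives
\[
m(K)\,\vec{V}\cdot\vec{W}=\int_{\partial K}G\cdot n\,d\mathcal{H}^{\ell-1}=\sum_{\sigma\in\bar{\mathlarger\varepsilon}_K}(\vec{V}\cdot n_{K,\sigma})\int_\sigma\bigl((x-x_K)\cdot\vec{W}\bigr)\,d\mathcal{H}^{\ell-1}(x).
\]

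Next I would compute each face integral using that $x_\sigma$ is the center of gravity of $\sigma$, which by definition means $\int_\sigma(x-x_\sigma)\,d\mathcal{H}^{\ell-1}(x)=0$, so that
\[
\int_\sigma\bigl((x-x_K)\cdot\vec{W}\bigr)\,d\mathcal{H}^{\ell-1}(x)=\vec{W}\cdot\int_\sigma(x-x_K)\,d\mathcal{H}^{\ell-1}(x)=m(\sigma)\,\bigl((x_\sigma-x_K)\cdot\vec{W}\bigr).
\]
Substituting back yields $m(K)\,\vec{V}\cdot\vec{W}=\sum_{\sigma\in\bar{\mathlarger\varepsilon}_K}m(\sigma)(\vec{V}\cdot n_{K,\sigma})\bigl((x_\sigma-x_K)\cdot\vec{W}\bigr)$, that is,
\[
\Bigl(m(K)\vec{V}-\sum_{\sigma\in\bar{\mathlarger\varepsilon}_K}m(\sigma)(\vec{V}\cdot n_{K,\sigma})(x_\sigma-x_K)\Bigr)\cdot\vec{W}=0 .
\]
Since $\vec{W}\in\mathbb R^\ell$ was arbitrary, the vector in parentheses vanishes, which is exactly \eqref{magi0}. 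The one-dimensional case $K=(a,b)$ is a degenerate instance of the same argument (the faces are the endpoints, $m(\sigma)=1$, and the ``surface integral'' is evaluation at a point), and reduces to the telescoping identity $(b-a)\vec{V}=(x_K-a)\vec{V}+(b-x_K)\vec{V}$.

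This lemma is essentially elementary convex geometry, so I do not expect a genuine obstacle; the only points worth a word of justification are the validity of the divergence theorem on the convex polygon $K$ (Lipschitz, piecewise flat boundary with outward normal equal to $n_{K,\sigma}$ on each face) and the defining barycenter property $\int_\sigma(x-x_\sigma)\,d\mathcal{H}^{\ell-1}=0$. If one prefers to avoid the divergence theorem altogether, the same identity can be obtained by decomposing $K$ into the pyramids $S_\sigma=\mathrm{conv}(\{x_K\}\cup\sigma)$ --- which have pairwise disjoint interiors and cover $\overline{K}$ because $x_K\in K$ and $K$ is convex --- parametrizing each $S_\sigma$ by $x=x_K+t(y-x_K)$ with $y\in\sigma$, $t\in[0,1]$, integrating the affine integrand $(x-x_K)\cdot\vec{W}$ over each $S_\sigma$, and summing over $\sigma$; I would nonetheless keep the divergence-theorem version as the main proof since it is shorter.
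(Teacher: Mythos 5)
Your proof is correct and is essentially the paper's own argument: the paper applies the divergence theorem to the field $(x^i-x_K^i)\vec{V}$ coordinate by coordinate and then uses the barycenter identity $\int_\sigma x^i\,d\gamma=m(\sigma)x_\sigma^i$, which is exactly your computation with $\vec{W}=e_i$. The only cosmetic difference is that you phrase it with an arbitrary test vector $\vec{W}$ instead of the standard basis, and you add an optional pyramid-decomposition alternative that the paper does not use.
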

\begin{proof}
We denote by a superscript $i$, the $i-$ th coordinate of vectors and points in $\mathbb R^\ell$. By Stokes formula, we have:
\begin{align}\label{magi1}
m(K)V^i=\displaystyle\int_K\div((x^i-x_K^i)\vec{V})dx&=\displaystyle\int_{\partial K}(x^i-x^i_K)\vec{V}.n_{K}d\gamma(x)\nonumber\\
&=\displaystyle\sum_{\sigma\in\bar{\mathlarger\varepsilon}_K}\displaystyle\int_\sigma(x^i-x^i_K)\vec{V}.n_{K,\sigma}d\gamma(x).
\end{align}
Hence, by the definition of the center of gravity, we have:
\begin{align}\label{magi2}
\displaystyle\int_\sigma(x^i-x^i_K)d\gamma(x)=\displaystyle\int_\sigma x^id\gamma(x)-m(\sigma)x_K^i=m(\sigma)x^i_\sigma-m(\sigma)x^i_K.
\end{align}
Remplace \eqref{magi2} in \eqref{magi1}; we find \eqref{magi0}.
\end{proof}
From now on, as the approximate solutions satisfy the discrete entropy inequalities \eqref{entropydiscrete01}, we prove that its satisfies  a continuous form of these inequalities.
\begin{thm}\label{continousentropy}
Assume that \eqref{reg1}, \eqref{monotony}-\eqref{consistency}  hold. Let $\uOdt$ be the approximate solution of the problem $(P)$ defined by \eqref{esti0},\eqref{esti1}. Then the following continuous approximate entropy inequalities hold: for all $k\in[0,u_{\max}]$, for all $\xi\in\mathcal{C}^\infty([0,T)\times\mathbb R^\ell)$, $\xi\geq0$,
\begin{align}\label{content}
&\displaystyle\int_0^T\!\!\!\int_\Omega\displaystyle \left\{\eta_k(\uOdt)\xi_t+\biggl(\Phi_k(\uOdt)-\nabla_{\mathcal{O}}\eta_{\phi(k)}(\phi(\uOdt))\biggr).\nabla\xi\displaystyle\right\}dxdt\nonumber\\&+\displaystyle\int_\Omega \eta_k(u_0)\xi(0,x)dx+\displaystyle\int_0^T\!\!\!\int_{\partial\Omega} \left|f(k).\eta(x)\right|\xi(t,x) d{\mathcal{H}}^{\ell-1}(x)dt\geq -\upsilon_{\mathcal{O},n}(\xi);
\end{align}
where: $\forall \xi\in\mathcal{C}^\infty([0,T)\times\mathbb R^\ell)$, $\upsilon_{\mathcal{O},n}(\xi)\rightarrow0$ when $h\rightarrow0$. Here
\begin{align*}%\label{grad5}
\nabla_{\mathcal{O}}\eta_{\phi(k)}(\phi(\uOdt))=\displaystyle\sum_{n=0}^N1_{[t_n,t_{n+1}]}\displaystyle \sum_{K|L}1_{\widehat{K|L}}\nabla_{\widehat{K|L}}\eta_{\phi(k)}(\phi(\uOdt)).
\end{align*}
\end{thm}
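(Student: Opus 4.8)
The plan is to pass from the discrete entropy inequality \eqref{entropydiscrete01} to its continuous counterpart \eqref{content} by multiplying by a discrete version of the test function and summing over all control volumes and time steps, then controlling the difference between the resulting discrete sums and the continuous integrals in \eqref{content} by the a priori estimates. Concretely, fix $k\in[0,u_{\max}]$ and $\xi\in\mathcal C^\infty([0,T)\times\mathbb R^\ell)$, $\xi\ge 0$, and set $\xi_K^n=\xi(n\delta t,x_K)$. Multiply \eqref{entropydiscrete01} by $\delta t\,\xi_K^{n}$ (or $\xi_K^{n+1}$; the choice only affects lower-order error terms), sum over $K\in\mathcal O$ and $n=0,\dots,N$, and perform Abel summation (discrete integration by parts) in time on the evolution term and discrete integration by parts by edges on the convective and diffusive terms. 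This produces four groups of terms: a time-derivative term $\sum_{n}\sum_K m(K)\eta_k(u_K^{n+1})\frac{\xi_K^{n+1}-\xi_K^n}{\delta t}\delta t$ plus the initial contribution $\sum_K m(K)\eta_k(u_K^0)\xi_K^0$; a convective term $\sum_n\delta t\sum_{K|L}\Phi_{K,\sigma,k}(u_K^{n+1},u_L^{n+1})(\xi_K^n-\xi_L^n)$; a diffusive term built from $\tau_{K|L}(\eta_{\phi(k)}(\phi(u_L^{n+1}))-\eta_{\phi(k)}(\phi(u_K^{n+1})))(\xi_K^n-\xi_L^n)$; and the boundary term $\sum_n\delta t\sum_{K\in\mathcal O}\sum_{\sigma\in\varepsilon_K^{ext}}\mathrm{sign}(u_K^{n+1}-k)m(\sigma)f(k).n_{K,\sigma}\,\xi_K^n$.

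The next step is to identify each discrete group with the corresponding integral in \eqref{content} up to an error that vanishes as $h\to 0$. For the time-derivative and initial terms this is a straightforward consistency estimate using $\xi\in\mathcal C^\infty$ and the $L^\infty$ bound \eqref{linfini}; the error is $O(h)+O(\delta t)$. For the diffusive term, the key is that $\tau_{K|L}(\xi_K^n-\xi_L^n)\approx -m(K|L)\,d_{K,L}^{-1}(\xi_L^n-\xi_K^n)$ reconstructs $\nabla_{\widehat{K|L}}\xi$ tested against the discrete gradient $\nabla_{\widehat{K|L}}\eta_{\phi(k)}(\phi(\uOdt))$ (using that $m(\widehat{K|L})=\frac{d_{K,L}}{\ell}m(K|L)$ from the mesh geometry), so that the discrete sum converges to $\int_0^T\!\int_\Omega \nabla_{\mathcal O}\eta_{\phi(k)}(\phi(\uOdt)).\nabla\xi$; here the Lemma \ref{magique} (Droniou's identity) is used to handle the gap between $\xi_K^n-\xi_L^n$ and $\nabla\xi\cdot(x_L-x_K)$, and the $H^1$ estimate \eqref{H1estima} together with Cauchy--Schwarz bounds the remainder. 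The boundary term converges to $\int_0^T\!\int_{\partial\Omega}|f(k).\eta(x)|\xi\,d\mathcal H^{\ell-1}$ after using $\mathrm{sign}(u_K^{n+1}-k)m(\sigma)f(k).n_{K,\sigma}\le m(\sigma)|f(k).n_{K,\sigma}|$ and the consistency of the mesh with $\partial\Omega$.

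The main obstacle — as in Michel--Vovelle — is the convective term: one cannot simply replace $\Phi_{K,\sigma,k}(u_K^{n+1},u_L^{n+1})$ by $\Phi_k(u_K^{n+1}).n_{K,\sigma}$ because the numerical entropy flux is only consistent, not equal, to the physical one, and the naive error involves $|u_K^{n+1}-u_L^{n+1}|$ which is not controlled in BV. The remedy is to write $\Phi_{K,\sigma,k}(u_K^{n+1},u_L^{n+1})(\xi_K^n-\xi_L^n)$, insert and subtract $\Phi_{K,\sigma,k}(u_K^{n+1},u_K^{n+1})=m(\sigma)\Phi_k(u_K^{n+1}).n_{K,\sigma}$, apply Lemma \ref{magique} to the consistent part to reconstruct $\int\Phi_k(\uOdt).\nabla\xi$, and estimate the remainder $\sum_n\delta t\sum_{K|L}|\Phi_{K,\sigma,k}(u_K^{n+1},u_L^{n+1})-\Phi_{K,\sigma,k}(u_K^{n+1},u_K^{n+1})|\,\|\xi\|_{C^1}h$ by the weak BV estimate \eqref{bvestimate}: the sum is $\le C\|\xi\|_{C^1}h\cdot\frac{C}{\sqrt h}=C\|\xi\|_{C^1}\sqrt h\to 0$. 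Collecting all these error contributions into $\upsilon_{\mathcal O,n}(\xi)$, which is then $O(\sqrt h)+O(\delta t)$ and tends to $0$, and recalling $\eta_k=\eta_k^++\eta_k^-$ etc. so that the entropy version follows from the sub- and super-entropy versions, yields \eqref{content}. $\qquad\blacksquare$
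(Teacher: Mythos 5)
Your proposal is correct and follows essentially the same route as the paper's proof: multiply the discrete entropy inequality by the discretized test function, sum and perform discrete integration by parts, reconstruct the gradient terms via Lemma \ref{magique} and the identity $m(\widehat{K|L})=\frac{d_{K,L}}{\ell}m(K|L)$, and absorb the convective consistency error into $\upsilon_{\mathcal O,n}(\xi)$ using the weak BV estimate \eqref{bvestimate} (giving $O(\sqrt h)$) and the diffusive error using the $H^1$ estimate \eqref{H1estima}. The only cosmetic differences are that the paper first reduces to separable test functions $\xi=\theta(t)\zeta(x)$ by density and evaluates at time level $n+1$, neither of which changes the substance of the argument.
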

\begin{rem}
In the same case, if we replace in \eqref{content} $\eta_k$ by $\eta^+_k$ (resp $\eta^-_k$)  and $\left|f(k).\eta(x)\right|$ by $\left(f(k).\eta(x)\right)^+$ (resp $\left(f(k).\eta(x)\right)^-$) we obtain sub entropy inequalities (resp super entropy inequalities). Obviously, the approximate solution $\uOdt$ is an approximate entropy solution if and only if $\uOdt$ is aproximate entropy sub-solution and entropy super-solution simultaneously. 
\end{rem}
\begin{proof}[Proof of Theorem~\ref{continousentropy}]
Let $\xi\in\mathcal{C}^\infty([0,T)\times\mathbb R^l)^+$ and $k\in[0,u_{\max}]$, we fix $T\geq0$ and set $N=\frac{T}{\delta t}+1$. 
It  is enough to suppose that $\xi(t,x)=\theta(t)\zeta(x)$, this mean that $\xi^{n+1}_K=\theta^{n+1}\zeta_K$. By density in $\mathcal{C}^\infty([0,T[\times \mathbb R^\ell)$ of linear combinations of such functions, the general case will follow. Depending on the circumstances, $\zeta_K=\fint_K\zeta$ or $\zeta_K=\zeta(x_K)$ with $x_K$ the center of control volume $K$. \\
Multiplying inequality \eqref{entropydiscrete01} by $\delta t\xi^{n+1}_K$ and summing over $K\in\mathcal{O}$ and $n\in\{0,...,N\}$, yields the inequality $I^{Disc}_{Evol}+I^{Disc}_{Conv}+I^{Disc}_{Diff}\leq 0$,
 where:
\begin{align}
I^{Disc}_{Evol}&=\displaystyle\sum_{n=0}^N\displaystyle\sum_{K\in\mathcal{O}}m(K)\biggl(\eta_k(u_K^{n+1})-\eta_k(u_K^{n})\biggr)\xi^{n+1}_K,\\
I^{Disc}_{Conv}&=\displaystyle\sum_{n=0}^N\delta t\displaystyle\sum_{K\in\mathcal{O}}\displaystyle\sum_{\sigma\in\mathlarger\varepsilon_K}\Phi_{K,\sigma,k}(u_K^{n+1},u_{K,\sigma}^{n+1}) \xi_K^{n+1}\nonumber\\-&\displaystyle\sum_{n=0}^N\delta t\displaystyle\sum_{K\in\mathcal{O}}\displaystyle\sum_{\sigma\in\mathlarger\varepsilon_K^{ext}}sign(u_K^{n+1}-k)F_{K,\sigma,k}(k,k) \xi_K^{n+1},\\
I^{Disc}_{Diff}&=-\displaystyle\sum_{n=0}^N\delta t\displaystyle\sum_{K\in\mathcal{O}}\displaystyle\sum_{K|L}\tau_{K|L}\biggl(\eta_{\phi(k)}(\phi(u_{L}^{n+1}))-\eta_{\phi(k)}(\phi(u_{K}^{n+1}))\biggr)\xi_K^{n+1}.
\end{align}
To prove  inequality \eqref{content}, we have to prove that $I^{Cont}_{Evol}+I^{Cont}_{Conv}+I^{Cont}_{Diff}\leq\upsilon_{\mathcal{O},n}(\xi)$ where $I^{Cont}_{Evol}$, $I^{Cont}_{Conv}$ and $I^{Cont}_{Diff}$ are defined by:
\begin{align*}\label{content1}
&I^{Cont}_{Evol}=-\displaystyle\int_0^T\int_\Omega\displaystyle\eta_k(\uOdt)\zeta(x)\theta_t(t)dxdt-\displaystyle\int_\Omega \eta_k(u_0)\theta(0)\zeta(x)dx,\\
&I^{Cont}_{Conv}=-\displaystyle\int_0^T\theta\int_\Omega\displaystyle\Phi_k(\uOdt).\nabla\zeta dxdt-\displaystyle\int_0^T\theta\int_{\partial\Omega} \left|f(k).\eta(x)\right|\zeta(x) d{\mathcal{H}}^{\ell-1}(x)dt,\\
&I^{Cont}_{Diff}=\displaystyle\int_0^T\theta\int_\Omega\displaystyle\nabla_{\mathcal{O}}\eta_{\phi(k)}(\phi(\uOdt)).\nabla\zeta dxdt.
\end{align*}
Then, we have to compare $I^{Disc}_{Evol}$ with $I^{Cont}_{Evol}$; $I^{Dicr}_{Conv}$ with $I^{Cont}_{Conv}$; and $I^{Disc}_{Diff}$ with $I^{Cont}_{Diff}$.\\
Firstly,  we have to  estimate $|I^{Disc}_{Evol}-I^{Cont}_{Evol}|$. Using the definition of  $\uOdt$, the quantity $I^{Disc}_{Evol}$ reads:
\begin{align}
I^{Disc}_{Evol}&=-\displaystyle\sum_{n=0}^{N-1}\displaystyle\sum_{K\in\mathcal{O}}m(K)\eta_k(u_K^{n+1})\biggl(\xi^{n+1}_K-\xi^{n}_K\biggr)-\displaystyle\sum_{K\in\mathcal{O}}m(K)\biggl(\eta_k(u_K^{0})\xi^{1}_K-\eta_k(u_K^{N+1})\xi^{N+1}_K\biggr)\nonumber\\
&=-\displaystyle\sum_{n=0}^{N-1}\delta t\displaystyle\sum_{K\in\mathcal{O}}m(K)\eta_k(u_K^{n+1})\frac{\xi^{n+1}_K-\xi^{n}_K}{\delta t}-\displaystyle\sum_{K\in\mathcal{O}}m(K)\eta_k(u_K^{0})\xi^{1}_K\nonumber\\
&=-\displaystyle\sum_{n=0}^{N-1}\delta t\displaystyle\sum_{K\in\mathcal{O}}m(K)\eta_k(u_K^{n+1})\frac{\theta^{n+1}-\theta^{n}}{\delta t}\zeta_K-\displaystyle\sum_{K\in\mathcal{O}}m(K)\eta_k(u_K^{0})\theta^{1}\zeta_K\nonumber\\
&=-\displaystyle\sum_{n=0}^{N-1}\delta t\displaystyle\sum_{K\in\mathcal{O}}m(K)\eta_k(u_K^{n+1})(\theta^{n})_t\fint_K\zeta(x)dx-\displaystyle\sum_{K\in\mathcal{O}}m(K)\eta_k(u_K^{0})\theta^{1}\fint_K\zeta(x)dx,
\end{align}
with $(\theta^{n})_t=\displaystyle\int_{t_n}^{t_{n+1}}\theta_t dt$.
We deduce that
\begin{align}
|I^{Disc}_{Evol}-I^{Cont}_{Evol}|\leq\upsilon_{\mathcal{O},k}^1(\xi)+\upsilon_{\mathcal{O},k}^2(\xi),
\end{align}
where:
\begin{align}\label{erreur1}
&\upsilon_{\mathcal{O},k}^1(\xi)=\displaystyle\sum_{n=1}^{N-1}\delta t\displaystyle\sum_{K\in\mathcal{O}}m(K)\eta_k(\uOdt)\mathbf 1_K\mathbf 1_{[t_n,t_{n+1}]}(\theta^{n})_t\biggl|\fint_K\zeta(x)dx-\zeta_K\biggr|,\\
&\upsilon_{\mathcal{O},k}^2(\xi)=\displaystyle\sum_{K\in\mathcal{O}}m(K)\eta_k(u_K^{0})\theta^{1}\biggl|\fint_K\zeta(x)dx-\zeta_K\biggr|.
\end{align}
%%%%%%%%%%%%%%%%%%%%%%%%%
As $\xi\in\mathcal{C}^\infty$, then we have:
\begin{align}
\biggl|\fint_K\zeta(x)dx-\zeta_K\biggr|\leq||\zeta||_{\mathcal{C}^1}h.
\end{align}
Then, the quantities $\upsilon_{\mathcal{O},k}^1(\xi)$, $\upsilon_{\mathcal{O},k}^2(\xi)$,  tend to zero when $h\rightarrow 0$. \\
Secondly, we study the difference between $I^{Disc}_{Conv}$ and $I^{Cont}_{Conv}$. We take care separately of  what happens inside and what happens on the boundary of $\Omega$. Therefore we write $I^{Cont}_{Conv}$ has the sum of $I^{Cont,int}_{Conv}$, and $I^{Cont,ext}_{Conv}$.%In fact%  with
\begin{align}\label{content1}
&I^{Cont,int}_{Conv}=-\displaystyle\int_0^T\theta\int_\Omega\displaystyle\Phi_k(\uOdt).\nabla\zeta dxdt\nonumber\\
&I^{Cont,ext}_{Conv}=-\displaystyle\int_0^T\theta\int_{\partial\Omega} \left|f(k).\eta(x)\right|\zeta(x) d{\mathcal{H}}^{\ell-1}(x)dt.
\end{align}
Further, introduce auxiliary values $(\zeta_{K|L})_{L\in\mathcal{N}(K)}$ by $\zeta_{K|L}=\zeta(x_{K|L})$, where $x_{K|L}$ is the barycenter of $K|L$.
The term $I^{Disc}_{Conv}$, which can be rewritten as the sum between $I^{Disc,int}_{Conv}$ and $I^{Disc,ext}_{Conv}$:
\begin{align}
I^{Disc,int}_{Conv}&=\displaystyle\sum_{n=0}^N\delta t\displaystyle\sum_{\sigma\in K|L}\Phi_{K,K|L,k}(u_K^{n+1},u_{L}^{n+1})\biggl[(\xi_K^{n+1}-\xi_{K|L}^{n+1})-(\xi_L^{n+1}-\xi_{K|L}^{n+1})\biggr]\nonumber\\
&=\displaystyle\sum_{n=0}^N\delta t\displaystyle\sum_{K\in\mathcal{O}}\displaystyle\sum_{L\in\mathcal{N}(K)}\Phi_{K,K|L,k}(u_K^{n+1},u_{L}^{n+1})(\xi_K^{n+1}-\xi_{K|L}^{n+1})\nonumber\\
&=\displaystyle\sum_{n=0}^N\theta^{n+1}\delta t\displaystyle\sum_{K\in\mathcal{O}}\displaystyle\sum_{L\in\mathcal{N}(K)}\Phi_{K,K|L,k}(u_K^{n+1},u_{L}^{n+1})(\zeta_K-\zeta_{K|L})\nonumber\\
&=-\displaystyle\sum_{n=0}^N\theta^{n+1}\delta t\displaystyle\sum_{K\in\mathcal{O}}\displaystyle\sum_{L\in\mathcal{N}(K)}\Phi_{K,K|L,k}(u_K^{n+1},u_{L}^{n+1})(\zeta_{K|L}-\zeta_K)\\
I^{Disc,ext}_{Conv}&=-\displaystyle\sum_{n=0}^N\delta t\displaystyle\sum_{K\in\mathcal{O}}\displaystyle\sum_{\sigma\in\mathlarger\varepsilon_K^{ext}}sign(u_K^{n+1}-k)F_{K,\sigma}(k,k) \xi_K^{n+1}\nonumber\\
&=-\displaystyle\sum_{n=0}^N\theta^{n+1}\delta t\displaystyle\sum_{K\in\mathcal{O}}\displaystyle\sum_{\sigma\in\mathlarger\varepsilon_K^{ext}}sign(u_K^{n+1}-k)F_{K,\sigma}(k,k) \zeta_K.
\end{align}
Now, we compare $I^{Cont,int}_{Conv}$ and $I^{Disc,int}_{Conv}$. As the numerical fluxes, the numerical entropy fluxes are consistent:
\begin{align*}
\Phi_{K,\sigma,k}(u_K^{n+1},u_K^{n+1})=\int_\sigma\Phi_{k}(u_K^{n+1}).n_{K,\sigma}d\gamma(x)dt=m(K|L)\Phi_{k}(u_K^{n+1}).n_{K|L}.
\end{align*}
Simultaneously, for each $K\in\mathcal{O}$, we approach $\zeta$ by the affine function $\tilde{\zeta}_K$ in a neighborhood of $K$,  with $\tilde{\zeta}_{K}=\tilde{\zeta}(x_K)$, we set $\tilde{\zeta}_{K|L}=\tilde{\zeta}(x_{K|L})$. Then
\begin{align}\label{affinefunction}%\tag{af}
%\left\{\begin{array}{rll}
&\zeta(x)1_K=\tilde{\zeta}_K+\underline{0}(|x-x_K|^2); \;\;\zeta_{K|L}-\tilde{\zeta}_{K|L}=\underline{0}(h^2); \;\;\nabla\tilde{\zeta}_K=\mbox{ cst  on } K \nonumber\\&||\nabla\zeta-\nabla\tilde{\zeta}||_{L^\infty(K)}=\underline{0}(h) \mbox{ and } \nabla\tilde{\zeta}_K.(x_K-x_{K|L})=\tilde{\zeta}_K-\tilde{\zeta}_{K|L}.
%\end{array}\right.
\end{align}
 We denote the resulting expression by $\tilde{I}^{Disc,int}_{Conv}$, we have
\begin{align*}
\tilde{I}^{Disc,int}_{Conv}&=-\displaystyle\sum_{n=0}^N\delta t\displaystyle\sum_{K\in\mathcal{O}}\displaystyle\sum_{L\in \mathcal{N}(K)}\Phi_{K,K|L,k}(u_K^{n+1},u_K^{n+1})(\tilde{\xi}^{n+1}_{K|L}-\tilde{\xi}^{n+1}_K)\\
&=-\displaystyle\sum_{n=0}^N\delta t\displaystyle\sum_{K\in\mathcal{O}}\displaystyle\sum_{L\in \mathcal{N}(K)}m(K|L)\Phi_k(u^{n+1}_K).n_{K|L}(\tilde{\xi}^{n+1}_{K|L}-\tilde{\xi}^{n+1}_K)\\
&=-\displaystyle\sum_{n=0}^N\delta t\theta^{n+1}\displaystyle\sum_{K\in\mathcal{O}}\displaystyle\sum_{L\in \mathcal{N}(K)}m(K|L)\Phi_k(u^{n+1}_K).n_{K|L}(\tilde{\zeta}_{K|L}-\tilde{\zeta}_K)\\
&=-\displaystyle\sum_{n=0}^N\delta t\theta^{n+1}\displaystyle\sum_{K\in\mathcal{O}}\displaystyle\sum_{L\in \mathcal{N}(K)}m(K|L)\Phi_k(u^{n+1}_K).n_{K|L}\nabla\tilde{\zeta}_K.(x_{K|L}-x_K)\\
&=-\displaystyle\sum_{n=0}^N\delta t\theta^{n+1}\displaystyle\sum_{K\in\mathcal{O}}\Phi_k(u^{n+1}_K).\displaystyle\sum_{L\in \mathcal{N}(K)}m(K|L)\nabla\tilde{\zeta}_K.(x_{K|L}-x_K)n_{K|L}.\\
\end{align*}
From now, using Lemma \ref{magique}, which states that%
\begin{align}
\displaystyle\sum_{K|L}m(K|L)\nabla\tilde{\zeta}_K.n_{K|L}(x_{K|L}-x_K)=m(K)\nabla\tilde{\zeta}_K,
\end{align}
we find:
\begin{align*}
\tilde{I}^{Disc,int}_{Conv}&=-\displaystyle\sum_{n=0}^N\delta t\theta^{n+1}\displaystyle\sum_{K\in\mathcal{O}}\Phi_k(u^{n+1}_K)m(K)\nabla\tilde{\zeta}_K.
\end{align*}
It is easy to see that
\begin{align*}
\tilde{I}^{Disc,int}_{Conv}=-\displaystyle\int_0^T\theta\int_\Omega\displaystyle\Phi_k(\uOdt).\nabla\tilde{\zeta}_K dxdt=:\bar{I}^{Disc,int}_{Conv}
\end{align*}
\begin{align}
|I^{Disc,int}_{Conv}-I^{Cont,int}_{Conv}|\leq&|I^{Disc,int}_{Conv}-\tilde{I}^{Disc,int}_{Conv}|+|\bar{I}^{Disc,int}_{Conv}-I^{Cont,int}_{Conv}|\nonumber\\
&=\upsilon_{\mathcal{O},k}^3(\xi)+\upsilon_{\mathcal{O},k}^4(\xi)
\end{align}
 with:
 \begin{align}\label{erreur2}
\upsilon_{\mathcal{O},k}^3(\xi)=&\displaystyle\sum_{n=0}^N\delta t\theta^{n+1}\!\displaystyle\sum_{K|L}\biggl|\biggl(\Phi_{K,L,k}(u_{K}^{n+1}\!,u_{K}^{n+1})-\Phi_{K,L,k}(u_{K}^{n+1}\!,u_{L}^{n+1})\biggr)\biggl(\zeta_K-\zeta_{K|L}\biggr)\biggr|;\nonumber\\
\upsilon_{\mathcal{O},k}^4(\xi)=&\displaystyle\int_0^T\theta\int_\Omega\displaystyle\Phi_k(\uOdt).|\nabla\zeta- \nabla\tilde{\zeta}_K|dxdt.
\end{align}
Let us show that $\upsilon_{\mathcal{O},k}^3(\xi)$ and $\upsilon_{\mathcal{O},k}^4(\xi)$  tend to zero as $h\rightarrow0$. Thanks to \eqref{affinefunction}, $\upsilon_{\mathcal{O},k}^4(\xi)$ as $h\rightarrow0$.
Now, we write:
\begin{align}\label{err1}
\xi(t,x)-\hat{\xi}_{K|L}^{n+1}=\frac{1}{\delta tm(K|L)}\int_{n\delta t}^{(n+1)\delta t}\!\!\!\!\int_{K|L}(\xi(t,x)-\xi(s,y))d\gamma(y)ds.
\end{align}
For all $(x,y)\in K|L\times K|L$, 
\begin{align}\label{err2} 
|\zeta(x)-\zeta(y)|\leq h||\nabla\zeta||_{L^\infty}.
\end{align}
We exploit the BV-weak estimates on space derivatives to prove that $\upsilon_{\mathcal{O},k}^3(\xi)$ tend to zero when $h$ goes to zero. Indeed, we have
\begin{align*}
|\Phi_{K,L,k}(u_{K}^{n+1}\!,u_{K}^{n+1})-\Phi_{K,L,k}(u_{K}^{n+1}\!,u_{L}^{n+1})|\leq \max_{u_L^{n+1}\leq c\leq d\leq u^{n+1}_K}(F_{K,\sigma}(d,c)-(F_{K,\sigma}(d,d))
\end{align*}
and thanks to \eqref{err2}, we get an estimate on the difference between the average value of $\zeta$ and a control volume and on one of its edges: there exists $C_{\zeta}$ depending only upon $\zeta$, such that
\begin{align*}
\forall K|L,\;\; |\zeta_K-\zeta_{K|L}^{n+1}|\leq C_\zeta h.
\end{align*} 
Therefore, the following estimate on $\upsilon_{\mathcal{O},k}^3(\xi)$ holds:
\begin{align*}
&\upsilon_{\mathcal{O},k}^3(\zeta)=C_\zeta(h)\displaystyle\sum_{n=0}^N\delta t\displaystyle\sum_{K|L}\biggl[\max_{u_L^{n+1}\leq c\leq d\leq u_K^{n+1}}(F_{K,\sigma}(d,c)-F_{K,\sigma}(d,d))\biggr]\nonumber\\&
	+C_\zeta(h)\displaystyle\sum_{n=0}^N\delta t\displaystyle\sum_{K|L}\biggl[\max_{u_L^{n+1}\leq c\leq d\leq u_K^{n+1}}(F_{K,\sigma}(d,c)-F_{K,\sigma}(c,c))\biggr]\nonumber\\&\leq C_\zeta \frac{h}{\sqrt{h}}
\end{align*}
where the constant $C_\xi$ is given by \eqref{bvestimate}.
Now, it remains to notice that %compare $I^{Cont,ext}_{Conv}$ to $I^{Disc,ext}_{Conv}$. 
\begin{align}
-I^{Disc,ext}_{Conv}&=\displaystyle\sum_{n=0}^N\theta^{n+1}\delta t\displaystyle\sum_{K\in\mathcal{O}}\displaystyle\sum_{\sigma\in\mathlarger\varepsilon_K^{ext}}sign(u_K^{n+1}-k)F_{K,\sigma}(k,k) \zeta_K\nonumber\\&\leq\displaystyle\sum_{n=0}^N\theta^{n+1}\delta t\displaystyle\sum_{K\in\mathcal{O}}\displaystyle\sum_{\sigma\in\mathlarger\varepsilon_K^{ext}}|F_{K,\sigma}(k,k)| \zeta_K=-I^{Cont,ext}_{Conv}.
\end{align}
Then, we have:
\begin{align}
I^{Cont,ext}_{Conv}-I^{Disc,ext}_{Conv}\leq0.
\end{align}
 The last step is to compare $I^{Cont,ext}_{Diff}$ to $I^{Disc,ext}_{Diff}$. We rewrite the term $I^{Disc}_{Diff}$ as
\begin{align}
I^{Disc}_{Diff}&=-\displaystyle\sum_{n=0}^N\delta t\displaystyle\sum_{\sigma\in {K|L}}\tau_{K|L}\biggl(\eta_{\phi(k)}(\phi(u_{L}^{n+1}))-\eta_{\phi(k)}(\phi(u_{K}^{n+1}))\biggr)\biggl(\xi_K^{n+1}-\xi_L^{n+1}\biggr)\nonumber\\
&=-\displaystyle\sum_{n=0}^N\delta t\theta^{n+1}\displaystyle\sum_{\sigma\in K|L}\frac{m(K|L)}{d_{K,L}}\biggl(\eta_{\phi(k)}(\phi(u_{L}^{n+1}))-\eta_{\phi(k)}(\phi(u_{K}^{n+1}))\biggr)\biggl(\zeta_K-\zeta_L\biggr).\nonumber\\
&=-\displaystyle\sum_{n=0}^N\delta t\theta^{n+1}\displaystyle\sum_{\sigma\in K|L}\frac{m(K|L)d_{K,L}}{\ell}\biggl(\ell\nabla_{\widehat{K|L}}\eta_{\phi(k)}(\phi(u_{\mathcal{O}}^{n+1}))\biggr).\biggl(\frac{\zeta_K-\zeta_L}{d_{K,L}}.n_{K|L}\biggr)\nonumber\\
&=-\displaystyle\sum_{n=0}^N\delta t\theta^{n+1}\displaystyle\sum_{\sigma\in K|L}m(\widehat{K|L})\nabla_{\widehat{K|L}}\eta_{\phi(k)}(\phi(u_{\mathcal{O}}^{n+1}))\tilde{\nabla}_{\widehat{K|L}}\zeta
\end{align}
where: $\tilde{\nabla}_{\widehat{K|L}}\zeta=\fint_{x_K}^{x_L} \nabla\zeta$.
Notice that 
\begin{align*}
||\nabla\zeta-\tilde{\nabla}_{\widehat{K|L}}\zeta||_{L^\infty(\widehat{K|L})}=\underline{o}(h).
\end{align*}
Therefore we have
\begin{align}
|I^{Disc}_{Diff}-I^{Cont}_{Diff}|&\leq\upsilon_{\mathcal{O},k}^5(\xi),
\end{align}
with:
 \begin{align}\label{erreur5}
\upsilon_{\mathcal{O},k}^5(\xi)=&\displaystyle\int_0^T\theta\int_\Omega\displaystyle|\nabla_{\mathcal{O}}\eta_{\phi(k)}(\phi(\uOdt))|.|\nabla\zeta-\tilde{\nabla}_{\widehat{K|L}}\zeta| dxdt.
\end{align}
To conclude, we prove that $\upsilon_{\mathcal{O},k}^5(\xi)\rightarrow0$ as $h\rightarrow0$. Using Cauchy-Schwarz inequality, we find 
\begin{align*}
\upsilon_{\mathcal{O},k}^5(\xi)\leq||\theta||_{L^\infty}||\nabla_{\mathcal{O}}\eta_{\phi(k)}(\phi(\uOdt))||_{L^2}\underline{0}(h).\end{align*}
Then, using the fact that $\eta$ is $1-$Lipschitz, and the estimate \eqref{H1estima}  we prove that $\upsilon_{\mathcal{O},k}^5(\xi)\rightarrow0$ as $h\rightarrow0$.
\end{proof}
%%%%%%%%%%%%%%%%%%%%%%%%%%%%%%%%%%%%%%%%%%%%%%%%%%%%%%%%%%%%%%%%%%%%%%%%%%%%%%%%%%%%%%%%%%%%%%%%%%%%%%%%%%%%%%%%%%%%%%%%%%%%%%%%%%%%%%%%%%%%%%%%%%%%%%%%%%%%%%%%%%%%%
\begin{section}{Convergence of the scheme}
The main result of this paper is the following theorem.
\begin{thm}\label{conv}(Convergence of the approximate solution  towards the entropy solution).
Assume that one of the following hypotheses is satisfied
\begin{align}\label{dimension}\tag{$H_{\ell=1}$}
\ell=1 \mbox{ and } \Omega=(a,b) \mbox{ an interval of }\mathbb R;
\end{align}
\begin{align}\label{conv-uc}\tag{$H_{u_c=0}$}
\ell\geq1\;\;u_c=0,\; \mbox{ and } f\circ\phi^{-1}\in\mathcal{C}^{0,\alpha}, \alpha>0;
\end{align}
\begin{align}\label{conv-umax}\tag{$H_{u_c=u_{\max}}$}
\ell\geq1\;\;u_c=u_{\max}.% \mbox{ and } \Omega=(a,b) \mbox{ an interval of }\mathbb R.
\end{align}
 Let, $(\uOdt)_{\mathcal{O},\delta t}$ be a family of approximate solutions of problem $(P)$ defined by  \eqref{esti0}, \eqref{esti1}. Then, under hypotheses \eqref{reg2}-\eqref{consistency}, we have:
\begin{align}
\forall p\in[1,+\infty)\;\;\uOdt\longrightarrow u \mbox{ in } L^p(Q) \mbox{ as } \max(\delta t, h)\longrightarrow0;
\end{align}
\begin{align*}
\nabla_{\mathcal{O}}\phi(u_{\mathcal{O},n})\rightharpoonup\nabla\phi(u)\mbox{ in } L^2(Q) \mbox{ as } \max(\delta t, h)\longrightarrow0
\end{align*}
where $u$ is the unique entropy solution of $(P)$, i.e $u$ satisfies \eqref{ESzeroflux}.
\end{thm}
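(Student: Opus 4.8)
The argument proceeds in three stages: compactness, passage to the limit towards an \emph{entropy-process solution}, and identification of that solution with the unique entropy solution.

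\emph{Compactness.} By Proposition~\ref{exi} the family $(\uOdt)$ is bounded in $L^\infty(Q)$ by $u_{\max}$, so, along a subsequence, it converges in the nonlinear weak-$\star$ sense of \cite{EGHMichel,VOV}: there is a measurable $\mu=\mu(t,x,\alpha)$ on $Q\times(0,1)$ with values in $[0,u_{\max}]$ such that $g(\uOdt)\rightharpoonup\int_0^1 g(\mu(\cdot,\cdot,\alpha))\,d\alpha$ weakly-$\star$ in $L^\infty(Q)$ for every $g\in\mathcal C(\mathbb R)$. Simultaneously, Lemma~\ref{TRANSLATION} and the Riesz--Fréchet--Kolmogorov Theorem~\ref{kolmogorov} give relative compactness of $(\phi(\uOdt))$ in $L^2(Q)$; its strong limit must equal $\overline\phi:=\int_0^1\phi(\mu(\cdot,\cdot,\alpha))\,d\alpha$, the estimate \eqref{H1estima} gives $\overline\phi\in L^2(0,T;H^1(\Omega))$, and a routine duality computation identifies the weak $L^2(Q)^\ell$ limit of $\nabla_{\mathcal O}\phi(\uOdt)$ with $\nabla\overline\phi$. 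Since $\phi$ is strictly increasing on $[u_c,u_{\max}]$, strong convergence of $\phi(\uOdt)$ forces $\mu(t,x,\cdot)$ to be $\alpha$-independent wherever its values lie in $(u_c,u_{\max}]$, so $\overline\phi=\phi(\mu)$ a.e.\ and $\nabla\phi(\mu)=\nabla\overline\phi$ is unambiguous (it also vanishes a.e.\ on $\{\mu\leq u_c\}$). Write $u:=\mu$.

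\emph{Passage to the limit.} In the continuous approximate entropy inequality \eqref{content} of Theorem~\ref{continousentropy} one has $\upsilon_{\mathcal{O},n}(\xi)\to0$; the evolution, initial, convective and boundary terms pass to the limit by the nonlinear weak-$\star$ convergence, and the diffusion term by the strong convergence of $\phi(\uOdt)$ together with the weak convergence of its discrete gradient, the identity $sign(u-k)\nabla\phi(u)=\nabla(\phi(u)-\phi(k))^{\pm}$ (valid because $\phi(u)\neq\phi(k)$ only where $\phi$ is one-to-one) turning it into a genuine gradient term. This yields that $u=u(t,x,\alpha)$ is an entropy-process solution of $(P)$: for all $k\in[0,u_{\max}]$ and all $0\leq\xi\in\mathcal C^\infty([0,T)\times\mathbb R^\ell)$,
\[
\int_0^T\!\!\!\int_\Omega\!\!\int_0^1\!\Big\{|u-k|\xi_t+sign(u-k)\big[f(u)-f(k)-\nabla\phi(u)\big].\nabla\xi\Big\}\,d\alpha\,dx\,dt+\int_0^T\!\!\!\int_{\partial\Omega}\!|f(k).\eta|\,\xi\,d\mathcal H^{\ell-1}dt+\int_\Omega|u_0-k|\xi(0,\cdot)\,dx\geq0.
\]

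\emph{Identification.} It remains to show this entropy-process solution is $\alpha$-independent and solves \eqref{ESzeroflux}; once this holds, uniqueness promotes the convergence from a subsequence to the whole family, the Dirac structure of $u(t,x,\cdot)$ upgrades nonlinear weak-$\star$ convergence to convergence in measure, hence, with the $L^\infty$ bound, to $\uOdt\to u$ in $L^p(Q)$ for all $p<\infty$, while the weak $L^2(Q)$ convergence of the gradients is the remaining assertion. Under \eqref{conv-uc} (non-degenerate parabolic, $f\circ\phi^{-1}$ Hölder) or \eqref{conv-umax} (pure hyperbolic) this reduction is precisely the Kruzhkov--Carrillo doubling of variables of \cite{BF}, resp.\ \cite{BFK1}: doubling the entropy-process solution against itself gives $u(t,x,\alpha)=u(t,x,\beta)$ a.e.\ and that the common value satisfies \eqref{ESzeroflux}. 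The delicate case, and the reason for the extra machinery of this paper, is \eqref{dimension}: for the zero-flux condition the boundary term lacks the regularity needed to close the doubling of variables (see \cite{BG}), so one argues instead through the abstract evolution problem $(E)$ with the $L^1$-accretive operator $A$ associated with $(P_1)$. The implicit scheme \eqref{esti0}, \eqref{esti1} is a backward-Euler (Crandall--Liggett) discretisation of $v'+A_{\mathcal O}v=0$ with $A_{\mathcal O}$ an $L^1$-approximation of $A$; by the construction of Appendix~1 its limit is an \emph{integral-process solution} of $(E)$, and by the same appendix every integral-process solution coincides with the unique integral (mild) solution, which is therefore $\alpha$-independent. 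Finally, in one space dimension the stationary solutions enjoy the additional boundary regularity established in \cite{BG}, so that they may be used as Kruzhkov constants in the boundary term, whence the integral solution is in fact an entropy solution of $(P)$; uniqueness of the latter in $1$D is proved in Appendix~2. This settles the identification, hence the theorem, in all three cases. The passage through the integral-process solution in the degenerate one-dimensional case --- where neither compactness nor doubling of variables alone suffices --- is the main obstacle.
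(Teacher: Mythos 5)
Your first two stages (compactness via the $L^\infty$ bound, the translation estimates and Lemma \ref{weakstearphi}, then passage to the limit in the continuous entropy inequality of Theorem \ref{continousentropy} to obtain an entropy-process solution) coincide with the paper's Proposition \ref{conventropyprocess}. The identification stage, however, departs from the paper in two places where I see genuine gaps rather than a mere alternative route.

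First, you dispose of the cases \eqref{conv-uc} and \eqref{conv-umax} by ``doubling the entropy-process solution against itself.'' The paper does not do this in any of the three cases, and for \eqref{conv-uc} the symmetric doubling does not close: the boundary regularity that makes the boundary terms manageable (continuity of $(f(u)-\nabla\phi(u)).\eta$ up to $\partial\Omega$, via Lieberman \cite{Liberman}) is available only for solutions of the \emph{stationary} problem $(S)$, not for the evolution (let alone process) solution. The reference \cite{BF} that you invoke in fact proceeds exactly as this paper does — asymmetric doubling against a trace-regular stationary solution followed by the nonlinear semigroup argument — not by evolution-versus-evolution doubling. For \eqref{conv-umax} a symmetric doubling would require strong boundary traces of the flux for the entropy-\emph{process} (measure-valued) object itself, which is not what \cite{BFK1,PAN} provide off the shelf. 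The paper's actual mechanism is uniform across all three hypotheses: Theorem \ref{unicite} shows that whenever $A_{f,\phi}$ is m-accretive densely defined the entropy-process solution is the unique entropy solution, and Propositions \ref{uc-umax}, \ref{prouc}, \ref{proumax} verify the m-accretivity separately under \eqref{dimension}, \eqref{conv-uc}, \eqref{conv-umax}.

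Second, even in the one-dimensional case where you do use the integral-process machinery, you assert that the limit of the scheme is an integral-process solution of $(E)$ because the implicit scheme ``is a backward-Euler (Crandall--Liggett) discretisation of $v'+A_{\mathcal O}v=0$ with $A_{\mathcal O}$ an $L^1$-approximation of $A$.'' Appendix 1 contains no such construction: it only proves that integral-process solutions coincide with the mild solution. To run your argument one would need a resolvent-consistency (Trotter--Kato type) result relating the discrete spatial operators to $A_{f,\phi}$, which is established nowhere. The paper instead derives the integral-process inequality \eqref{intprocsolu} from the entropy-process formulation already obtained in Proposition \ref{conventropyprocess}, by Kruzhkov doubling of the process solution against stationary entropy solutions $(v,z)\in A_{f,\phi}$ (Proposition \ref{carrilloprocessus} and the proof of Theorem \ref{unicite}), exploiting precisely the boundary regularity of the stationary solutions; the verification of the initial condition \eqref{processunitial} also requires a separate lemma. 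These are the steps your proposal skips, and they are the substance of the identification.
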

\begin{rem}\label{interessant}
It is possible to replace in the Theorem \ref{conv} all the three hypotheses \eqref{dimension}, \eqref{conv-uc}, \eqref{conv-umax} by the following one, which is much more general:
\begin{align}\label{dimensionsup}\tag{$H_{\mbox{reg}}(u_0)$}
\left \{\begin{array}{ll}
&\ell\geq 1 \mbox{ and } u_0 \mbox{ is such that there exist an entropy solution u of } (P) \mbox{  such that  }\nonumber\\& (f(u)-\nabla\phi(u)).\eta(x) \mbox{ possess a strong trace in }L^1 \mbox{ sense}.
\end{array}\right.
\end{align}
Such kind of function $u$ satisfying \eqref{dimensionsup}, will be called trace regular entropy solution (see \cite{Boris}). The idea to prove uniqueness of entropy solution is to compare any entropy solution of $(P)$ with trace regular entropy solution and break the symmetry in the application of doubling of variables method by taking test function that is zero on the boundary $Q\times ((0,T)\times\partial\Omega)$ of $Q\times Q$ but non zero on the boundary $((0,T)\times\partial\Omega)\times Q$ (see the method of \cite{BF,BG}). If \eqref{dimensionsup} is satisfied for all $u_0$ that belong  to a certain subset $X$ such that $\overline{X}^{||.||_{L^1}}=L^1(\Omega;[0,u_{\max}])$, then uniqueness is true for all $u_0$.\\
 Presently to our knowledge the only  results which establish that \eqref{dimensionsup} hold for a dense subset $X$ is proved for the case  \eqref{conv-umax} (see \cite{BFK1,PAN}).\\ In this pure hyperbolic case existence of the strong trace of the flux is established in  \cite{BFK1,PAN}. Then uniqueness of entropy solution follows by standard doubling of variable methods and it is enough to take a  symmetric test function.\\
In the case where hypotheses \eqref{conv-uc} or \eqref{dimension} are satisfied, it is more easy to prove existence of trace regular entropy solution for the stationary problem with $L^\infty$ source term. In this case, we even have sense that  the total flux is continuous up to the boundary, i.e $(f(u)-\nabla\phi(u)).\eta\in\mathcal{C}(\overline{\Omega})$ (see \cite{Liberman}), \cite{BT}). Then we can adopt the same strategy as in the case where  \eqref{dimensionsup} hold , but in the doubling of variable method we compare entropy solution of $(P)$ with trace regular entropy solution of $(S$). Then using nonlinear semigroup approach, we prove that entropy solution of $(P)$ is the unique mild solution (see \cite{ BG,BF}). The same strategy is adopted here to prove that entropy-process solution (see Definition \ref{entrprosol}) is the unique entropy solution ( see Appendix $1$ and $2$). 
\end{rem}
\begin{proof}[Proof of Theorem~\ref{conv}]
The proof of Theorem \ref{conv} is in two steps. First in Proposition \ref{conventropyprocess} , we prove that the approximate solutions converge towards an entropy-process solution. Then in Appendix $2$ (see Theorem \ref{unicite}, and Proposition \ref{uc-umax}, \ref{prouc}, \ref{proumax}) we prove that entropy-process solution is in fact the unique entropy solution using the intermediate notion of integral-process solution developed for this purpose in the Appendix $1$. 
\end{proof}
\begin{subsection}{Entropy process solution}
\begin{defn}\label{entrprosol}
Let $\mu\in L^\infty(Q\times(0,1))$. The function $\mu=\mu(t,x,\alpha)$ taking values in $[0,u_{\max}]$ is called an entropy-process solution to  problem $(P)$ if $\forall k\in  [0,u_{\max}]$,  $\forall\xi\in \mathcal{C}^\infty([0,T)\times\mathbb R^\ell)$, with $\xi\geq 0$, the following inequality   holds :
\begin{align}\label{ESP1}
&\displaystyle\int_0^T\int_\Omega\int_0^1\displaystyle \left\{|\mu(\alpha)u-k|\xi_t+sign(\mu(\alpha)-k)\Bigl[f(\mu)-f(k)\Bigr].\nabla\xi\displaystyle\right\}dxdtd\alpha\nonumber\\&-\displaystyle\int_0^T\int_\Omega\nabla|\phi(u)-\phi(k)|.\nabla\xi dxdt+\displaystyle\int_0^T\int_{\partial\Omega} \left|f(k).\eta(x)\right|\xi(t,x) d{\mathcal{H}}^{\ell-1}(x)dt\nonumber\\&+\displaystyle\int_\Omega |u_0-k|\xi(0,x)dx\geq 0,
\end{align}
where $u(t,x)=\displaystyle\int_0^1\mu(t,x,\alpha)d\alpha$.
\end{defn}
\begin{rem}\label{entrprosol1}
If $\mu\in L^\infty(Q\times(0,1))$ is entropy process solution then, it satisfies for all $\xi\in L^2(0,T;H^1(\Omega))$ such that $\xi_t\in L^1(Q)$ and $\xi(T,.)=0$ 
\begin{equation}\label{entropyweakproc}
\displaystyle\int_0^1\int_0^T\int_\Omega\displaystyle\left\{\mu\xi_t+\biggl(f(\mu)-\nabla\phi(u)\biggr).\nabla\xi\right\}dxdtd\alpha+\int_\Omega u_0\xi(0,x)dx=0.
\end{equation} 
\end{rem}
We recall the nonlinear weak star convergence for $(\uOdt)_{\mathcal{O},\delta t}$ which is equivalent to the notion of convergence towards  a Young measure as developed in \cite{Diperna}.
\begin{thm}\label{weakstar}\normalfont{( R. Eymard, T. Gallou\"{e}t, and R. Herbin, \cite{EGH})} (Nonlinear weak star Convergence)
Let $(u_n)_{n\in\mathbb N}$ be a bounded sequence in $L^\infty(Q)$. Then, there exists $\mu\in L^\infty(Q\times(0,1))$, such that up to a subsequence, $u_n$  tends to $\mu$ in the nonlinear weak star sense as $n\longrightarrow\infty$, i.e:
\begin{align}
\forall h\in\mathcal{C}(\mathbb R,\mathbb R),\;\; h(u_n){\rightharpoonup}\displaystyle\int_0^1 h(\mu(.,\alpha))d\alpha\mbox{ weakly }-*\mbox{ in }  L^\infty(Q) 
\end{align}
%where $\stackrel{nl-*}{\rightharpoonup}$ mean the convergence for weak star topology in  $L^\infty(Q)$. 
Moreover,  if $\mu$ is independent on $\alpha$ (i.e $\mu(t,x,\alpha)=u(t,x)$ for a.e. $(t,x)$, and for all $\alpha$), then $u_n$ converge strongly in $L^1(Q)$ towards some $u(t,x)$. In particular, observe that the following holds:
\begin{lem}\label{weakstearphi}
Suppose that the sequence $u_n(.)\rightharpoonup\mu(.,\alpha)$ in the nonlinear weak star sense, assume that $g$  is a continuous non decreasing function such that $g(u_n(.))\longrightarrow\theta$ strongly in $L^1(Q)$. Then, $\theta=g(\mu(.,\alpha))=g(u)$ where $u(t,x)=\displaystyle\int_0^1\mu(t,x,\alpha)d\alpha$.% 
\end{lem}
\begin{proof}
Let $v_n=g(u_n)$, since g is continuous, then the sequence $v_n$ is bounded in $L^\infty(Q)$,  so that $v_n(t,x)\stackrel{nl-*}{\rightharpoonup}\nu(t,x,\alpha)$ (where $\stackrel{nl-*}{\rightharpoonup}$ mean the convergence for weak star topology in  $L^\infty(Q)$) and $v_n\rightarrow \theta$ in $L^1(Q)$ and $\nu(t,x,\alpha):=g(\mu(t,x,\alpha))$ is an associated Young measure, since for all $h\in\mathcal{C}(\mathbb R,\mathbb R)$
\begin{align*}
h(v_n(t,x))=(h\circ g)(u_n)\rightharpoonup\displaystyle\int_0^1 h\circ g(\mu(.,\alpha))d\alpha=\displaystyle\int_0^1 h(\nu(t,x,\alpha)) d\alpha.
\end{align*}
Since $v_n$ tend to $\theta$ strongly, one deduces that $\nu(t,x,\alpha)=\theta(t,x)$ and $\nu$ does not depend on $\alpha$. Moreover, if $g$ is continuous and nondecreasing the level sets $g^{-1}(\{c\})$ are closed intervals of $\mathbb R$. Then for all $(t,x)\in Q$, 
\begin{align}
\mu(.,\alpha)\in g^{-1}(\{\theta(.)\})\Longrightarrow u(.)=\displaystyle\int\mu(.,\alpha)d\alpha\in g^{-1}(\{\theta(.)\})\Longrightarrow g(u(.))=\theta(.).
\end{align}
\end{proof}
\end{thm}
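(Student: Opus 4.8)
The plan is to push the nonlinear weak-$\ast$ convergence through $g$, identify the limiting Young measure as $g(\mu)$, and then use the strong $L^1$ convergence to force that Young measure to be a Dirac mass. First I would set $v_n=g(u_n)$. Since $g$ is continuous and $0\le u_n\le u_{\max}$, the sequence $(v_n)$ is bounded in $L^\infty(Q)$, so by Theorem~\ref{weakstar} there exist $\nu\in L^\infty(Q\times(0,1))$ and a subsequence along which $v_n$ converges to $\nu$ in the nonlinear weak-$\ast$ sense.

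The key step is the identification $\nu=g(\mu)$. For every $h\in\mathcal{C}(\mathbb{R},\mathbb{R})$ the composite $h\circ g$ is continuous, so the hypothesis on $u_n$ gives $h(v_n)=(h\circ g)(u_n)\rightharpoonup\int_0^1 (h\circ g)(\mu(\cdot,\alpha))\,d\alpha=\int_0^1 h\bigl(g(\mu(\cdot,\alpha))\bigr)\,d\alpha$ weak-$\ast$ in $L^\infty(Q)$. Comparing this with $h(v_n)\rightharpoonup\int_0^1 h(\nu(\cdot,\alpha))\,d\alpha$ and using uniqueness of the Young-measure representation, I may take $\nu(t,x,\alpha)=g(\mu(t,x,\alpha))$ for a.e.\ $(t,x)$.

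Next I would exploit that $v_n\to\theta$ strongly in $L^1(Q)$. Passing to a further subsequence so that $v_n\to\theta$ a.e., and testing the nonlinear weak-$\ast$ limit with $h(s)=s$ and $h(s)=s^2$, one obtains $\int_0^1\nu(\cdot,\alpha)\,d\alpha=\theta$ and $\int_0^1\nu(\cdot,\alpha)^2\,d\alpha=\theta^2$, hence $\int_0^1(\nu-\theta)^2\,d\alpha=0$; thus $\nu$ is independent of $\alpha$ and equals $\theta$, i.e.\ $g(\mu(t,x,\alpha))=\theta(t,x)$ for a.e.\ $(t,x)\in Q$ and every $\alpha\in(0,1)$.

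Finally, to upgrade this to $\theta=g(u)$ I would use that $g$ is nondecreasing: for a.e.\ $(t,x)$ the level set $g^{-1}(\{\theta(t,x)\})$ is a closed (possibly degenerate) interval of $\mathbb{R}$, and it contains $\mu(t,x,\alpha)$ for every $\alpha$, hence it contains the average $u(t,x)=\int_0^1\mu(t,x,\alpha)\,d\alpha$; therefore $g(u(t,x))=\theta(t,x)$. I expect the main obstacle to be the identification step: matching the Young measure of $g(u_n)$ with the image of that of $u_n$, and legitimately turning the "a.e.\ $\alpha$'' identities into statements valid for every $\alpha$ (handled by continuity of $g$ together with a suitable choice of representative), together with the passage from nonlinear weak-$\ast$ plus strong $L^1$ convergence to the Dirac structure of $\nu$; the concluding level-set argument is then elementary but crucially uses monotonicity, not merely continuity, of $g$.
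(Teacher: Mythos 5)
Your proposal is correct and follows essentially the same route as the paper: set $v_n=g(u_n)$, identify the Young measure of $(v_n)$ with $g(\mu)$ via $h\circ g$, use the strong $L^1$ convergence to force $\nu=\theta$ independent of $\alpha$, and conclude with the convexity of the level sets $g^{-1}(\{\theta\})$. The only difference is that you make explicit (via the $h(s)=s$ and $h(s)=s^2$ moments) the step the paper merely asserts, namely that strong convergence collapses the Young measure to a Dirac mass; this is a welcome clarification rather than a different argument.
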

From now we give a	 ''discrete $L^2(0,T;H^1(\Omega))$'' compactness result (see e.g. \cite{EGH}).
\begin{lem}\label{convphi}
Consider a family of corresponding discrete functions $\wOdt$ satisfying the uniform bounds.
\begin{align}
\sum_{n=0}^{N}\delta t\sum_{K\in\mathcal{O}}m(K)(w^{N+1}_K)^2\leq C;\;\;\; \sum_{n=0}^{N}\delta t\sum_{K|L}\tau_{K|L}(\nabla_{\widehat{K|L}}w_{\mathcal{O}})^2\leq C,
\end{align}
where the discrete gradient $\nabla_{\widehat{K|L}}$ are defined by \eqref{gradiantdiscret}.\\Then there exists $w\in L^2(0,T;H^1(\Omega))$ such that, up to extraction of a subsequence, $\wOdt\rightarrow w$ in $L^2(Q)$ weakly and $\nabla_{\mathcal{O}}w\rightharpoonup\nabla w$ in $(L^2(Q))^\ell$ weakly.
\end{lem}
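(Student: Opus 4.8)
The plan is to run the classical discrete $H^1$-compactness argument of \cite{EGH}, adapted to the diamond discrete gradient \eqref{gradiantdiscret}, along a sequence of meshes with $h\to0$ and $\delta t\to0$. First I would observe that the two hypotheses say exactly that $(\wOdt)$ is bounded in $L^2(Q)$ and, after rewriting the second bound with the geometric identity $m(\widehat{K|L})=\frac{d_{K,L}}{\ell}m(K|L)$, that $(\nabla_{\mathcal{O}}\wOdt)$ is bounded in $(L^2(Q))^\ell$. A diagonal extraction then produces a subsequence, still written $\wOdt$, together with limits $w\in L^2(Q)$ and $\chi\in(L^2(Q))^\ell$ such that $\wOdt\rightharpoonup w$ weakly in $L^2(Q)$ and $\nabla_{\mathcal{O}}\wOdt\rightharpoonup\chi$ weakly in $(L^2(Q))^\ell$. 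The whole lemma is then reduced to identifying $\chi$ with the distributional spatial gradient $\nabla_x w$ on $Q$: once this is known, $w\in L^2(Q)$ together with $\nabla_x w=\chi\in(L^2(Q))^\ell$ gives $w\in L^2(0,T;H^1(\Omega))$, and the two stated weak convergences are precisely what we have.

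To identify $\chi$ I would test $\nabla_{\mathcal{O}}\wOdt$ against an arbitrary $\psi\in\mathcal{C}^\infty_c(Q)^\ell$ and take $h$ small enough that the spatial support of $\psi$ stays in the part of $\Omega$ covered by the diamonds. Using \eqref{gradiantdiscret} and $m(\widehat{K|L})=\frac{d_{K,L}}{\ell}m(K|L)$, and replacing $\psi$ by its value at the space-time point $(t_{n+1},x_{K|L})$ on each cell (time slab $\times$ diamond), the integral $\int_Q\nabla_{\mathcal{O}}\wOdt\cdot\psi\,dx\,dt$ becomes, up to an error tending to $0$, the sum $\sum_{n=0}^{N}\delta t\sum_{K|L}m(K|L)(w_L^{n+1}-w_K^{n+1})\,n_{K,\sigma}\cdot\psi(t_{n+1},x_{K|L})$ with $\sigma=K|L$. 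Next I would perform a discrete integration by parts, gathering the internal edges into control volumes (the summand is symmetric under the exchange $K\leftrightarrow L$, and the exterior edges contribute nothing because $\psi$ vanishes near $\partial\Omega$), which rewrites the sum as $-\sum_{n=0}^N\delta t\sum_K w_K^{n+1}\sum_{\sigma\in\bar{\mathlarger\varepsilon}_K}m(\sigma)\,n_{K,\sigma}\cdot\psi(t_{n+1},x_\sigma)$, where $x_\sigma$ denotes the barycentre of $\sigma$. Finally, using $\sum_{\sigma\in\bar{\mathlarger\varepsilon}_K}m(\sigma)n_{K,\sigma}=0$, a first-order Taylor expansion of $\psi$ at $x_K$ and Lemma~\ref{magique}, the inner sum equals $m(K)\,\div\psi(t_{n+1},x_K)$ up to an error of order $h\,m(K)$; summing over $K$ and $n$, this identifies $\int_Q\nabla_{\mathcal{O}}\wOdt\cdot\psi\,dx\,dt$ with $-\int_Q\wOdt\,\div\psi\,dx\,dt$ up to a remainder that tends to $0$ as $h\to0$.

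Passing to the limit along the subsequence in this last identity, using the two weak convergences, gives $\int_Q\chi\cdot\psi\,dx\,dt=-\int_Q w\,\div\psi\,dx\,dt$ for every $\psi\in\mathcal{C}^\infty_c(Q)^\ell$, that is, $\chi=\nabla_x w$ in $\mathcal{D}'(Q)$, which completes the argument. The step I expect to be the main obstacle is the uniform control (with respect to the mesh) of the consistency remainders: one must check that the interpolation error coming from replacing $\psi$ by its nodal and edge-barycentric values, the error in the discrete Stokes formula of Lemma~\ref{magique}, and the neglected boundary contributions are each bounded by a constant times $h$ (or $\sqrt h$), and this is exactly where the mesh-regularity assumption \eqref{reg2}, the identity $m(\widehat{K|L})=\frac{d_{K,L}}{\ell}m(K|L)$, and the a priori bounds of the hypothesis all enter (via Cauchy--Schwarz, since here only $L^2$-type bounds on $\wOdt$ and $\nabla_{\mathcal{O}}\wOdt$ are available); the time discretisation contributes only an $O(\delta t)$ error and is harmless since the discrete functions are piecewise constant in time.
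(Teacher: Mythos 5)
The paper does not actually prove this lemma: it is quoted as a known ``discrete $L^2(0,T;H^1(\Omega))$'' compactness result with a pointer to Eymard--Gallou\"et--Herbin \cite{EGH}, so there is no in-paper argument to compare against. Your proof is the standard one from that reference --- weak $L^2$ compactness of $\wOdt$ and of $\nabla_{\mathcal O}\wOdt$ (correctly reading the stated bounds, which contain obvious typos, as an $L^2(Q)$ bound on $\wOdt$ and an $L^2(Q)^\ell$ bound on the discrete gradient via $m(\widehat{K|L})=\frac{d_{K,L}}{\ell}m(K|L)$), followed by identification of the weak limit of the discrete gradient through testing against $\psi\in\mathcal C^\infty_c(Q)^\ell$, discrete summation by parts over edges, and the consistency identity of Lemma~\ref{magique} --- and it is correct; the only points worth making explicit are that $\nabla_{\mathcal O}\wOdt$ must be extended by zero on the $h$-neighbourhood of $\partial\Omega$ not covered by the diamonds (harmless, since the test functions are compactly supported), and the routine Fubini step upgrading $\chi=\nabla_x w$ in $\mathcal D'(Q)$ to $w\in L^2(0,T;H^1(\Omega))$.
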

%\begin{proof}
%$\nabla_{\mathcal{O}}\phi(\uOdt)$ are bounded in $(L^2(Q))^\ell$ uniformly and $\phi(\uOdt)$ are uniformly bounded in $L^2(Q)$ Extracting weakly convergent subsequences, we get $\phi(\uOdt)\rightharpoonup\phi(u)$ weakly in $L^2$ and $\nabla_{\mathcal{O}}\phi(\uOdt)\rightharpoonup\chi$. Its remains to identify $\chi$ to the gradient of $\phi$ in the sens of distributions. Taking $\xi\in\mathcal{C}^\infty_0(Q)$ 
%\end{proof}
We wish to prove the convergence of the approximate solution $(\uOdt)$ to an entropy solution $u$ of $(P)$, i.e. we want to prove that there exists a limit $u$ and that  it satisfies  \eqref{ESzeroflux}. For that purpose, we prove first that $(\uOdt)$ tends in the nonlinear weak star sense to an entropy-process solution. 
\begin{prop}\label{conventropyprocess}(Convergence towards an entropy-process solution)
Under hypotheses \eqref{reg2}-\eqref{consistency}, let $\uOdt$ be the approximate solution of problem $(P_1)$ defined by  \eqref{esti0}, \eqref{esti1}. There exists an entropy-process solution $\mu$ of $(P_1)$ in the sense of Definition \ref{entrprosol} and a subsequence of $(\uOdt)_{\mathcal{O},\delta t}$ , such that:
\begin{enumerate}
\item The sequence $(\uOdt)_{\mathcal{O},\delta t}$ converges to $\mu$ in the nonlinear weak star sense.
\item Moreover $(\phi(\uOdt))_{\mathcal{O},\delta t}$ converges strongly in $L^2(Q)$ to $\phi(u)$  as $h,\delta t$ tend to zero and 
\item  $(\nabla_{\mathcal{O}}\phi(\uOdt))_{\mathcal{O},\delta t}\rightharpoonup\nabla \phi(u)$ in $(L^2(Q))^\ell$ weakly,
 \end{enumerate}
 where $u(t,x)=\int_0^1\mu(t,x,\alpha)d\alpha$. 
\end{prop}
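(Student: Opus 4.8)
The plan is to combine the a priori estimates of Section~4 with the nonlinear weak-$\star$ convergence theorem and then to pass to the limit in the continuous approximate entropy inequality of Theorem~\ref{continousentropy}. First I would invoke the uniform bound \eqref{linfini} of Proposition~\ref{exi}: the family $(\uOdt)_{\mathcal{O},\delta t}$ is bounded in $L^\infty(Q)$ with values in $[0,u_{\max}]$, so by Theorem~\ref{weakstar} there exist $\mu\in L^\infty(Q\times(0,1))$ with values in $[0,u_{\max}]$ and a subsequence, still denoted $(\uOdt)$, such that $\uOdt$ converges to $\mu$ in the nonlinear weak-$\star$ sense; I then set $u(t,x)=\int_0^1\mu(t,x,\alpha)\,d\alpha$. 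Applying Theorem~\ref{weakstar} with the continuous functions $s\mapsto|s-k|$ and $s\mapsto \mathrm{sign}(s-k)(f(s)-f(k))$ gives, for every $k$, $\eta_k(\uOdt)\rightharpoonup\int_0^1|\mu(\cdot,\alpha)-k|\,d\alpha$ and $\Phi_k(\uOdt)\rightharpoonup\int_0^1 \mathrm{sign}(\mu(\cdot,\alpha)-k)\bigl(f(\mu(\cdot,\alpha))-f(k)\bigr)\,d\alpha$ weakly-$\star$ in $L^\infty(Q)$.

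Next I would establish (2) and (3). The translation estimates \eqref{translate01}--\eqref{translate02} of Lemma~\ref{TRANSLATION}, together with the Riesz--Fr\'echet--Kolmogorov theorem (Theorem~\ref{kolmogorov}) and the uniform $L^\infty$ bound on $\phi(\uOdt)$, show that $(\phi(\uOdt))$ is relatively compact in $L^2(Q)$ (local compactness in $Q$ suffices because of the $L^\infty$ bound). Hence, up to a further subsequence, $\phi(\uOdt)\to\theta$ strongly in $L^2(Q)$, and therefore in $L^1(Q)$, for some $\theta\in L^\infty(Q)$. Since $\phi$ is continuous and nondecreasing, Lemma~\ref{weakstearphi} applied with $g=\phi$ identifies $\theta=\phi(\mu(\cdot,\alpha))=\phi(u)$, which is precisely (2). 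For (3) I would apply the discrete $L^2(0,T;H^1(\Omega))$ compactness of Lemma~\ref{convphi} to $\wOdt=\phi(\uOdt)$: the required bounds are supplied by \eqref{linfini} and by the $H^1$ estimate \eqref{H1estima} of Lemma~\ref{H1estimate}. This yields some $w\in L^2(0,T;H^1(\Omega))$ with $\phi(\uOdt)\rightharpoonup w$ in $L^2(Q)$ and $\nabla_{\mathcal{O}}\phi(\uOdt)\rightharpoonup\nabla w$ in $(L^2(Q))^\ell$; uniqueness of the weak limit combined with the strong convergence just obtained forces $w=\phi(u)$, so $\phi(u)\in L^2(0,T;H^1(\Omega))$ and $\nabla_{\mathcal{O}}\phi(\uOdt)\rightharpoonup\nabla\phi(u)$, which is (3).

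It then remains to prove (1), i.e.\ that $\mu$ is an entropy-process solution, by letting $h,\delta t\to0$ in \eqref{content}. The evolution term $\int_0^T\!\!\int_\Omega\eta_k(\uOdt)\xi_t$ and the convective term $\int_0^T\!\!\int_\Omega\Phi_k(\uOdt)\cdot\nabla\xi$ converge to $\int_0^T\!\!\int_\Omega\int_0^1|\mu-k|\xi_t$ and $\int_0^T\!\!\int_\Omega\int_0^1\mathrm{sign}(\mu-k)(f(\mu)-f(k))\cdot\nabla\xi$ respectively, by the nonlinear weak-$\star$ convergences recorded above. The boundary term $\int_0^T\!\!\int_{\partial\Omega}|f(k)\cdot\eta|\xi$ and the initial term $\int_\Omega\eta_k(u_0)\xi(0,\cdot)$ do not depend on the mesh, and $\upsilon_{\mathcal{O},n}(\xi)\to0$ by Theorem~\ref{continousentropy}. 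The only term requiring care is the diffusion term $-\int_0^T\!\!\int_\Omega\nabla_{\mathcal{O}}\eta_{\phi(k)}(\phi(\uOdt))\cdot\nabla\xi$: I would apply Lemma~\ref{convphi} once more, this time to $\wOdt=\eta_{\phi(k)}(\phi(\uOdt))$, whose discrete $H^1$ seminorm is controlled by that of $\phi(\uOdt)$ because $\eta_{\phi(k)}$ is $1$-Lipschitz, and combine it with the fact that $\eta_{\phi(k)}(\phi(\uOdt))\to\eta_{\phi(k)}(\phi(u))=|\phi(u)-\phi(k)|$ strongly in $L^2(Q)$, which follows from the strong convergence $\phi(\uOdt)\to\phi(u)$ and the continuity of $\eta_{\phi(k)}$. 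Identifying the weak limit gives $\nabla_{\mathcal{O}}\eta_{\phi(k)}(\phi(\uOdt))\rightharpoonup\nabla|\phi(u)-\phi(k)|$ in $(L^2(Q))^\ell$, so the diffusion term tends to $-\int_0^T\!\!\int_\Omega\nabla|\phi(u)-\phi(k)|\cdot\nabla\xi$. Passing to the limit in \eqref{content} thus produces \eqref{ESP1} for every $k\in[0,u_{\max}]$ and every nonnegative $\xi\in\mathcal{C}^\infty([0,T)\times\mathbb R^\ell)$, which proves (1).

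The hard part will be precisely this last point, namely showing that the weak $L^2$ limit of the discrete gradients of the nonlinear (yet Lipschitz) quantity $\eta_{\phi(k)}(\phi(\uOdt))$ is the distributional gradient of its strong limit $|\phi(u)-\phi(k)|$. This is a discrete chain-rule/consistency argument that genuinely uses both the discrete $H^1$ weak compactness of Lemma~\ref{convphi} and the strong $L^2$ convergence of $\phi(\uOdt)$ obtained in step (2); in addition, a little care is needed to make this identification hold simultaneously for all levels $k$ (for instance by first treating a countable dense set of levels and then passing to a general $k$ via the uniform Lipschitz continuity of $\eta_{\phi(k)}$ together with the strong convergence of $\phi(\uOdt)$). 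Everything else in the limit passage is routine.
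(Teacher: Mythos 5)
Your proposal is correct and follows essentially the same route as the paper: nonlinear weak-$\star$ convergence from the $L^\infty$ bound, strong $L^2$ compactness of $\phi(\uOdt)$ via the translation estimates and Riesz--Fr\'echet--Kolmogorov with identification through Lemma \ref{weakstearphi}, discrete $H^1$ compactness from Lemma \ref{convphi}, and passage to the limit in \eqref{content} handling the diffusion term through the $1$-Lipschitz continuity of $\eta_{\phi(k)}$. Your write-up is in fact more explicit than the paper's (notably on identifying the weak limit of $\nabla_{\mathcal{O}}\eta_{\phi(k)}(\phi(\uOdt))$ and on uniformity in the level $k$), but the ingredients and their roles coincide.
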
 
From this result, we deduce Theorem \ref{conv},  using additional regularity properties coming from \eqref{dimension}, \eqref{conv-uc} or \eqref{conv-umax} (see also Remark \ref{interessant} for variants of the concluding argument).
\begin{proof}[Proof of Proposition~\ref{conventropyprocess}]
Passage to the limit in the  continuous entropy inequality:\\
Recall that we have proved that $\upsilon_{\mathcal{O},n}(\xi)\rightarrow0$ when $(h,\delta t)\rightarrow0$ for $\xi\in\mathcal{C}^\infty([0,T[\times\mathbb R^\ell)$.  We follow step by step the passage to the limit for each term of the left hand side of \eqref{continousentropy}.\\
Because $\uOdt$ is bounded in $L^\infty(Q)$, by Theorem \ref{weakstar}, there exist $\mu\in L^\infty(Q\times(0,1))$ such that up to a subsequence, $(\uOdt)$ tends to $\mu$  in the nonlinear weak star sense as $\max(\delta t, h)\longrightarrow0$. We set $u(t,x)=\int_0^1\mu(t,x,\alpha)d\alpha$. Using the continuity of $\Phi_k(.)$ and $\eta_k(.)=|.-k|$, we prove that:
\begin{align*}
\displaystyle\int_0^T\!\int_\Omega\eta_k(\uOdt)\xi_tdxdt\rightarrow\displaystyle\int_0^1\int_0^T\!\int_\Omega\displaystyle\int_0^1\mu(t,x,\alpha)\xi_tdxdtd\alpha,
\end{align*}
\begin{align*}
\displaystyle\int_0^T\!\!\!\int_\Omega\displaystyle \Phi_k(\uOdt).\nabla\xi dxdt\rightarrow\int_0^T\!\!\!\int_\Omega\displaystyle\int_0^1\displaystyle \Phi_k(\mu)dxdtd\alpha.
\end{align*}
Due to \eqref{H1estima}  and by the Fr\'echet-Kolmogorov's theorem (due to the time and space translation on $\phi(\uOdt))$ we can apply lemma \ref{convphi} for $\wOdt=\phi(\uOdt)$.  Notice that in view of  Lemma \ref{weakstearphi}, it appears that $\phi(\mu)=\phi(u)$ where $u(t,x)=\int_0^1\mu(t,x,\alpha)d\alpha$. The Lipschtiz continuity of $\eta_{\phi}$ permits to have 
\begin{align*}
\displaystyle\int_0^T\!\!\!\int_\Omega\nabla_{\mathcal{O}}\eta_{\phi}(\phi(u_{\mathcal{O},n})).\nabla\xi\rightharpoonup\displaystyle\int_0^T\!\!\!\int_\Omega\nabla\eta_{\phi}(\phi(u)).\nabla\xi.
\end{align*}
We conclude that $\uOdt$ converge to an entropy-process solution $\mu$.
\end{proof}
\end{subsection}
\end{section}
%%%%%%%%%%%%%%%%%%%%%%%%%%%%%%%%%%%%%%%%%%%%%%%%%%%%%%%%%%%%%%%%%%%%%%%%%%%%%%%%%%%%%%%%%%%%%%%%%%%%%%%%%%%%%%%%%%%%%%%%%%%%%%%%%%%%%%%%%%%%%%%%%%%%%%%%%%%%%%%%%%%
%\section{Numerical results}
%In this part we present some numerical test to demonstrate the robustness of our schemes. We place ourselves in two-dimensional space $\Omega=[0,1]\times[0,1]$, for a simple one space dimension, on can refer to \cite{GAZ}. The rectangular mesh are used to discretize $(P)$.
%%%%%%%%%%%%%%%%%%%%%%%%%%%%%%%%%%%%%%%%%%%%%%%%%%%%%%%%%%%%%%%%%%%%%%%%%%%%%%%%%%%%%%%%%%%%%%%%%%%%%%%%%%%%%%%%%%%%%%%%%%%%%%%%%%%%%%%%%%%%%%%%%%%%%%%%%%%%%%%%%%
\section{Appendix 1}
We consider here a Banach  space $X$ (in application to the problem $(P)$, we will take $X=L^1(\Omega)$) and the multivalued operator $A:X\times X\longrightarrow X$ defined by its graph. We study the general evolution problem $u'+Au\ni h$, $u(0)=u_0$. In our application, $A$ is formally defined by $Au=\div f(u)-\Delta\phi(u)$ with zero-flux boundary condition.  In the sequel, we suppose that the operator $A$ is m-accretive and $u_0\in \overline{D}(A) $. We refer to \cite{BGP} for  definition and to \cite{BF,BG} and Appendix 2 for proof of these properties in our concrete setting which is our final purpose.  In relation with the classical notion of integral solution to the abstract evolution problem introduced in the thesis of Ph. B\'enilan , see e.g. \cite{BGP}, \cite{Mazones} we consider a new notion of solution called integral-process solution which depend on an additional variable $\alpha\in(0,1)$. The purpose here is to prove that the integral-process solution  of $(E)$ coincides with mild and integral solutions. Therefore the interest of  the notion of integral-process solution resides only in the fact that it may appear from some weak convergence arguments, see Appendix 2 for the example we have in mind. Let us recall the notion of mild solution. In the sequel, $||.||=||.||_X$ being the norm in $X$.
\begin{defn}
A mild solution of the abstract problem $u'+Au\ni h$ on $[0,T]$ is a function $u\in\mathcal{C}([0,T]; X)$ such that for $\sigma>0$ there is an $\sigma-$ discretization $D_A^N(t_0,....,t_N,h_1,...,h_N)$ of  $u'+Au\ni h$ on $[0,T]$ which has an $\sigma-$ approximate solution $v$ satisfying 
\begin{align}
||u(t)-v(t)||\leq \sigma \mbox{ for } t_0\leq t\leq t_N.
\end{align}
\end{defn}
%\begin{rem}
Recall that a $\sigma-$ approximate solution $v$ of $u'+Au\ni h$ on $[0,T]$ is the solution of an $\sigma-$ discretization $D_A^N(t_0,....,t_N,h_1,...,h_N)$:
\begin{align}
\frac{v_i-v_{i-1}}{t_i-t_{i-1}}+Av_i\ni h_i,\;\;\; i=1,2,...,N
\end{align}
where $h\approx\displaystyle\sum_{i=1}^N h_i1_{]t_{i-1},t_i]}$ and $|t_i-t_{i-1}|\leq\sigma$. Further,  $v$ is an $\sigma-$ approximate solution of the abstract initial value problem $(E)$ if also $t_0=0$ and $||v_0-u_0||\leq\sigma$.
%\end{rem}
\begin{thm}
Let  $A$ be m-accretive in $L^1(\Omega)$ and $u(0)\in \overline{D}(A)$. Then the abstract initial-value problem $u'+Au\ni h$ on $(0,T]$, $u(0)=u_0$ has a unique mild solution $u$ on $[0,T]$. Moreover $u$ is the unique function on $\mathcal{C}([0,T], X)$ such that for all $(\hat{u},z)\in A$
\begin{align}\label{intsolu}
||u(t)-\hat{u}||-||u(s)-\hat{u}||\leq\int_s^t\biggl[u(\tau)-\hat{u}, g(\tau)-z\biggr]d\tau 
\end{align}
for $0\leq s\leq t\leq T$.
\\
Here, $[a,b]:=\displaystyle\lim_{\lambda\downarrow 0} \frac{||a+\lambda b||-||a||}{\lambda}$ is the bracket on $X$  (see \cite{BGP}). In particular if $X=L^1$ then $[a,b]_{L^1(\Omega)}=\displaystyle\int_{\Omega}sign(a)bdx+\displaystyle\int_{\{a=0\}}|b|dx$.
\end{thm}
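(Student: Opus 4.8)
The plan is to deduce the statement from the classical theory of nonlinear contraction semigroups generated by $m$-accretive operators (Crandall--Liggett for existence, B\'enilan for the integral-solution characterization; see \cite{BGP}), in three steps: construction of the mild solution by the implicit Euler scheme, verification that it is an integral solution, and uniqueness among integral solutions.

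\emph{Step 1 (existence and uniqueness of the mild solution).} Since $A$ is $m$-accretive, for every $\lambda>0$ the resolvent $J_\lambda=(I+\lambda A)^{-1}$ is a single-valued contraction on $X$ with range in $D(A)$. For $\varepsilon>0$ and an $\varepsilon$-discretization $D_A^N(t_0,\dots,t_N;h_1,\dots,h_N)$ with $h\approx\sum_i h_i\mathbf 1_{(t_{i-1},t_i]}$, the relations $v_i=J_{t_i-t_{i-1}}\!\bigl(v_{i-1}+(t_i-t_{i-1})h_i\bigr)$ and $\|v_0-u_0\|\le\varepsilon$ define a unique $\varepsilon$-approximate solution $v$. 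The Crandall--Liggett / Kobayashi estimate comparing two such discretizations shows that the piecewise-constant interpolants form a Cauchy net in $\mathcal C([0,T];X)$ as $\varepsilon\to0$; the limit $u$ is independent of the chosen discretizations, so it is by definition a mild solution, and the same comparison estimate forces any two mild solutions (with the same data) to coincide. This step carries the main technical weight, but it is entirely classical and I would simply quote \cite{BGP}.

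\emph{Step 2 ($u$ is an integral solution).} Fix $(\hat u,z)\in A$, so that $\hat u=J_{t_i-t_{i-1}}(\hat u+(t_i-t_{i-1})z)$. Writing $\Delta_i=t_i-t_{i-1}$ and using that $J_{\Delta_i}$ is a contraction,
\[
\|v_i-\hat u\|-\|v_{i-1}-\hat u\|\ \le\ \bigl\|(v_{i-1}-\hat u)+\Delta_i\,(h_i-z)\bigr\|-\|v_{i-1}-\hat u\| .
\]
Summing over the indices with $s<t_i\le t$, rewriting the right-hand side as $\sum_i\Delta_i\,q_i$ with $q_i=\Delta_i^{-1}\bigl(\|(v_{i-1}-\hat u)+\Delta_i(h_i-z)\|-\|v_{i-1}-\hat u\|\bigr)$, and letting $\varepsilon\to0$ — using the uniform convergence $v\to u$, the $L^1$-convergence of the step functions $\sum_i h_i\mathbf 1_{(t_{i-1},t_i]}$ to $h$, and the fact that $\mu\mapsto\mu^{-1}(\|a+\mu b\|-\|a\|)$ is nondecreasing and tends to $[a,b]$ as $\mu\downarrow0$ — yields
\[
\|u(t)-\hat u\|-\|u(s)-\hat u\|\ \le\ \int_s^t\bigl[u(\tau)-\hat u,\ h(\tau)-z\bigr]\,d\tau ,
\]
which is \eqref{intsolu} (with $g\equiv h$); the explicit $L^1$-form of $[\cdot,\cdot]$ is not needed here.

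\emph{Step 3 (uniqueness among integral solutions).} Let $w\in\mathcal C([0,T];X)$ with $w(0)=u_0$ satisfy \eqref{intsolu} for every $(\hat u,z)\in A$. Insert $(\hat u,z)=(v_i,z_i)$, where $(v_i)$ is the $\varepsilon$-approximate solution of Step~1 and $z_i=h_i-\Delta_i^{-1}(v_i-v_{i-1})\in Av_i$, into the integral inequality for $w$, combine it with the recursion satisfied by the $v_i$, and run B\'enilan's double-indexed comparison lemma (the discrete Gronwall device underlying Kobayashi's inequality). Passing to the limit $\varepsilon\to0$ gives $\|w(t)-u(t)\|\le\|w(0)-u(0)\|=0$ for all $t\in[0,T]$, hence $w\equiv u$. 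Thus $u$ is simultaneously the unique mild solution and the unique integral solution; I would again cite \cite{BGP} for the lemma. I expect no essentially new obstacle in this theorem: the genuinely new content of Appendix~1 lies not here but in the subsequent identification of the integral-\emph{process} solution with this integral solution, for which the present statement is the needed scaffolding.
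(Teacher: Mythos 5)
Your proposal is correct and matches the paper's treatment: the paper gives no proof of this theorem, simply referring to \cite{BGP}, and your three-step sketch is precisely the classical Crandall--Liggett/B\'enilan argument contained in that reference (you even defer the technical lemmas to \cite{BGP} yourself). The only cosmetic point is the paper's own notational slip between $h$ and $g$ in \eqref{intsolu}, which you resolve correctly by taking $g\equiv h$.
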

For the proof, we refer to \cite{BGP}.\\
A function $u$ satisfying \eqref{intsolu} is called integral solution. Here, we consider a seemingly more general notion of solution inspired by which is the object of this paper.%\\cite{AndreBK} \cite{VOV}, \cite{BG}.
\begin{defn}
Let  $A$ be an accretive operator and $g\in L^1(0,T;X)$. A function $v(t,\alpha)$ is an integral-process solution of abstract problem $v'+Av\ni g$ on $[0,T]$, $\nu(0,\alpha)=\nu_0$, if $v$ satisfy for all $(\hat{\nu},z)\in A$
\begin{align}\label{intprocsolu}
\displaystyle\int_0^1\!\!\biggl(||v(t,\alpha)\!-\hat{\nu}||\!-\!||v(s,\alpha)\!-\hat{\nu}||\biggr)d\alpha\!\leq\displaystyle\int_0^1\!\!\!\!\int_s^t\biggl[v(\tau,\alpha)-\hat{\nu}, g(\tau)-z\biggr]d\tau d\alpha
\end{align}
for $0< s\leq t\leq T$  and the initial condition is satisfied in the sense
\begin{align}\label{processunitial}
\mbox{ess-}\lim_{t\downarrow0}\int_0^1||v(t,\alpha)-\nu_0||d\alpha=0.
\end{align}
\end{defn}
Such generalization of the notion of integral solution is a purely technical hint, indeed, we show that  integral-process solutions coincide with the unique integral solution in the following sense.
\begin{thm}
Assume that $A$ be m-accretive in $X$ and $u_0\in\overline{D}(A)$, $u$ is an integral-process solution if and only if $u$ is independent on $\alpha$ and for all $\alpha$, $u(.,\alpha)$  coincide with the unique integral and mild solution.
\end{thm}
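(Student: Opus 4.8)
The implication ``$\Leftarrow$'' is immediate: if $u(t,\alpha)=w(t)$ for every $\alpha$, where $w\in\mathcal{C}([0,T];X)$ denotes the unique integral (equivalently, mild) solution of $u'+Au\ni g$, $u(0)=u_0$ --- furnished by the classical theory, cf.\ \cite{BGP} and the theorems recalled above --- then integrating \eqref{intsolu} over $\alpha\in(0,1)$, its integrand being independent of $\alpha$, yields \eqref{intprocsolu}, while \eqref{processunitial} is nothing but the continuity of $w$ together with $w(0)=u_0$. The whole content of the statement is therefore the converse: every integral-process solution must be of this form.

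Thus let $u=u(t,\alpha)$ be an integral-process solution of $u'+Au\ni g$ with datum $u_0$, and let $w$ be the unique mild solution of the same problem; $w$ exists since $A$ is m-accretive and $u_0\in\overline{D}(A)$. The plan is to reproduce, carrying the harmless average $\int_0^1(\cdot)\,d\alpha$ along at every step, the classical estimate that compares an integral solution with a mild solution. Fix $\sigma>0$ and choose an $\sigma$-discretization $D_A^N(t_0,\dots,t_N,h_1,\dots,h_N)$ together with an $\sigma$-approximate solution $(v_i)_{i=0}^N$ for which $\|w(t)-v_i\|\le\sigma$ whenever $t\in(t_{i-1},t_i]$, $\|v_0-u_0\|\le\sigma$, $\max_i|t_i-t_{i-1}|\le\sigma$ and $\sum_i\int_{t_{i-1}}^{t_i}\|g(\tau)-h_i\|\,d\tau\le\sigma$; then $(v_i,z_i)\in A$ with $z_i:=h_i-(v_i-v_{i-1})/(t_i-t_{i-1})$.

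Testing \eqref{intprocsolu} successively with $(\hat\nu,z)=(v_i,z_i)$ on the subintervals $[t_{i-1},t_i]$ and bounding each right-hand side by means of the elementary bracket inequalities $[a,b+c]\le[a,b]+\|c\|$ and $[a,b]\le\|b\|$, together with the convexity inequality $[a,c]\le\|a+c\|-\|a\|$ --- every one of which survives integration in $\alpha$ --- one controls the right-hand sides in terms of the telescoping quantities $\int_0^1\bigl(\|u(\tau,\alpha)-v_{i-1}\|-\|u(\tau,\alpha)-v_i\|\bigr)\,d\alpha$ and of the global error $\sum_i\int_{t_{i-1}}^{t_i}\|g-h_i\|$. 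Arranging these inequalities into the usual doubly-indexed array and carrying out the discrete Gronwall bookkeeping of B\'enilan--Crandall--Pazy (see \cite{BGP}), precisely as in the proof that a mild solution is an integral solution, one arrives at an estimate of the form
\begin{align*}
\int_0^1\|u(t,\alpha)-w(t)\|\,d\alpha\ \le\ C\,\Bigl(\mbox{ess-}\lim_{s\downarrow 0}\int_0^1\|u(s,\alpha)-u_0\|\,d\alpha\Bigr)+\omega(\sigma),
\end{align*}
with $C$ depending only on $T$ and $\omega(\sigma)\to0$ as $\sigma\to0$. The first term on the right vanishes by \eqref{processunitial}, and letting $\sigma\to0$ gives $\int_0^1\|u(t,\alpha)-w(t)\|\,d\alpha=0$ for every $t\in(0,T]$. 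Hence $u(t,\alpha)=w(t)$ for a.e.\ $\alpha\in(0,1)$ and every $t$, i.e.\ $u$ does not depend on $\alpha$ and coincides with the unique integral and mild solution.

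The one genuinely delicate point is the Gronwall/Kobayashi-type bookkeeping converting the family of tested inequalities \eqref{intprocsolu} into the single comparison estimate above; this is precisely the computation underlying the classical identification of mild and integral solutions, and the single new observation needed is that it remains valid after averaging in $\alpha$, since every inequality used is either linear or convex in the quantities being averaged. Two minor issues deserve care: \eqref{intprocsolu} is postulated only for $0<s\le t$, so the passage $s\downarrow0$ must be carried out along a sequence realising the essential limit in \eqref{processunitial}; and one should check that the $\sigma$-discretization can indeed be chosen with the additional smallness $\sum_i\int_{t_{i-1}}^{t_i}\|g-h_i\|\,d\tau\le\sigma$, which is in fact part of the definition of an $\sigma$-discretization of $u'+Au\ni g$.
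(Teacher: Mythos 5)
Your proposal is correct and follows essentially the same route as the paper: the paper likewise reduces the theorem to the classical B\'enilan--Crandall--Pazy comparison of a mild solution with an integral(-process) solution, testing \eqref{intprocsolu} against the pairs $(u_k^n,\,h_k^n-(u_k^n-u_{k-1}^n)/\delta_k^n)$ coming from a $\sigma_n$-discretization, using the same bracket inequalities, summing, passing to the limit via uniform convergence of the approximate solutions and upper-semicontinuity of the bracket, and handling $s\downarrow 0$ through the essential-limit initial condition \eqref{processunitial} --- the sole new observation in both arguments being that every step survives the averaging $\int_0^1(\cdot)\,d\alpha$. The only cosmetic difference is that the paper states this as a standalone comparison proposition between an integral-process solution of $v'+Av\ni g$ and a mild solution of a possibly different problem $u'+Au\ni h$ (then specializes $h=g$, $\nu_0=u_0$ to get the theorem), whereas you compare directly with the mild solution of the same problem.
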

The result  will follow directly from the proposition given bellow.
\begin{prop}
Let  $A$ be an accretive operator. If $v$ is an integral-process solution of $v'+Av\ni g$ on $[0,T]$, $\nu(0,\alpha)\equiv\nu_0$ and $u$ is a mild solution of $u'+Au\ni h$  on $[0,T]$, $u(0)=u_0$  then
\begin{align}\label{intpro}
\displaystyle\int_0^1\!||u(t)-v(t,\alpha)||d\alpha\!\leq\displaystyle\int_0^1\!||u_0-\nu_0||d\alpha+\!\displaystyle\displaystyle\int_0^t\!\!\int_0^1\biggl[u(\tau)-v(\tau,\alpha), h(\tau)-g(\tau)\biggr]d\tau d\alpha
\end{align}
for a.e. $t\in[0,T]$,
\end{prop}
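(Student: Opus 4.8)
The plan is to adapt the B\'enilan-type comparison between a mild solution and a solution of an integral inequality (see \cite{BGP,Mazones}), carrying throughout the extra integration in $\alpha\in(0,1)$. The key observation is that \eqref{intprocsolu} provides, against \emph{every} admissible pair $(\hat\nu,z)\in A$, precisely the inequality that an integral solution would satisfy, so the $\alpha$-averaged quantity $\int_0^1\|v(\cdot,\alpha)-\hat\nu\|\,d\alpha$ can be handled exactly as $\|w(\cdot)-\hat\nu\|$ is handled in the classical comparison of a mild solution with an integral solution $w$.

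First I would fix $\varepsilon>0$ and, since $u$ is a mild solution, choose an $\varepsilon$-discretization $D_A^{N}(t_0,\dots,t_N;h_1,\dots,h_N)$ of $u'+Au\ni h$ on $[0,t]$ together with an $\varepsilon$-approximate solution $(u_i)_{i=0}^{N}$, so that the associated step functions $u_\varepsilon$, $h_\varepsilon$ satisfy $\|u(\tau)-u_\varepsilon(\tau)\|\le\varepsilon$ on $[0,t_N]$, $\|u_0-u(0)\|\le\varepsilon$, $\int_0^{t_N}\|h(\tau)-h_\varepsilon(\tau)\|\,d\tau\le\varepsilon$ and $\max_i(t_i-t_{i-1})\le\varepsilon$. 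Writing $\sigma_i:=t_i-t_{i-1}$ and $p_i:=h_i-\sigma_i^{-1}(u_i-u_{i-1})$, one has $(u_i,p_i)\in A$ for each $i$, with the discrete relation $u_i=u_{i-1}+\sigma_i(h_i-p_i)$.

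Next, for each index $i$ and each subinterval $[t_{j-1},t_j]$ I would apply \eqref{intprocsolu} with the admissible pair $(\hat\nu,z)=(u_i,p_i)$; setting $b_{i,j}:=\int_0^1\|v(t_j,\alpha)-u_i\|\,d\alpha$ this reads
\[
b_{i,j}-b_{i,j-1}\ \le\ \int_0^1\!\!\int_{t_{j-1}}^{t_j}\bigl[v(\tau,\alpha)-u_i,\ g(\tau)-p_i\bigr]\,d\tau\,d\alpha .
\]
Combining these inequalities over $i$ and $j$ with the discrete relations, with the accretivity inequality $[u_i-u_{i-1},p_i-p_{i-1}]\ge0$, and with the elementary properties of the bracket ($\mu\ge0\Rightarrow\|a+\mu b\|\ge\|a\|+\mu[a,b]$; subadditivity and positive homogeneity of $[a,\cdot]$; $[a,b]+[a,-b]\ge0$; $|[a,b]|\le\|b\|$; $[a,b]=[-a,-b]$), I would derive, exactly as in the classical doubling argument of \cite{BGP,Mazones}, a discrete Gronwall-type recursion that bounds $b_{i,j}$ by a convex combination of $b_{i-1,j}$ and $b_{i,j-1}$ plus a term of order $\tfrac{\sigma_i\sigma_j}{\sigma_i+\sigma_j}\int_0^1[\,\cdot\,,\,h_i-g_j]\,d\alpha$ and a remainder whose accumulated size tends to $0$ with $\varepsilon$; the crucial point is that the contributions carrying the a priori unbounded vectors $p_i\in Au_i$ cancel. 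Applying the corresponding discrete lemma along the diagonal $i=j$ then yields, for $0<s\le t$,
\[
\int_0^1\|v(t,\alpha)-u_\varepsilon(t)\|\,d\alpha\ \le\ \int_0^1\|v(s,\alpha)-u_\varepsilon(s)\|\,d\alpha+\int_0^1\!\!\int_s^t\bigl[v(\tau,\alpha)-u_\varepsilon(\tau),\ g(\tau)-h_\varepsilon(\tau)\bigr]\,d\tau\,d\alpha+r_\varepsilon ,
\]
where $r_\varepsilon\to0$ as $\varepsilon\to0$.

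Finally I would let $\varepsilon\to0$ along a refining sequence of meshes: $u_\varepsilon\to u$ uniformly on $[0,t]$ and $h_\varepsilon\to h$ in $L^1(0,t;X)$, all integrands are dominated (thanks to the $L^\infty$ bound on $v$, the continuity of $u$, and $h\in L^1$), and, since the bracket is only upper semicontinuous, one checks that it sits on the favourable side of every inequality; this gives the same estimate with $u$ and $h$ in place of $u_\varepsilon$ and $h_\varepsilon$. Rewriting the integrand via $[x,y]=[-x,-y]$ as $[u(\tau)-v(\tau,\alpha),\,h(\tau)-g(\tau)]$, and then letting $s\downarrow0$ and invoking \eqref{processunitial} together with the continuity of $u$ to replace $\int_0^1\|v(s,\alpha)-u(s)\|\,d\alpha$ by $\|u_0-\nu_0\|=\int_0^1\|u_0-\nu_0\|\,d\alpha$, I obtain \eqref{intpro} for a.e.\ $t\in[0,T]$. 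The main obstacle is precisely the middle step: arranging the doubly-indexed recursion so that the terms involving $p_i\in Au_i$ (which lie only in $L^1$ and are not uniformly bounded) cancel, and then handling the one-sided, merely upper semicontinuous bracket consistently when passing to the limit; the treatment of the initial layer via \eqref{processunitial} and the dominated-convergence bookkeeping are comparatively routine.
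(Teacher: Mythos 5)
Your strategy is sound and is, at its core, the same B\'enilan-type comparison the paper uses: test the integral-process inequality \eqref{intprocsolu} with the pairs $(u_i,p_i)\in A$ coming from an $\varepsilon$-discretization of the mild solution, eliminate the unbounded elements $p_i$ by writing $g-p_i=(g-h_i)+\sigma_i^{-1}(u_i-u_{i-1})$ and using $[X,Y+Z]\le[X,Y]+[X,Z]$ together with $[X,Z]\le(\|X+eZ\|-\|X\|)/e$, telescope, pass to the limit via uniform convergence of the approximate solutions and upper semicontinuity of the bracket, and absorb the initial layer through \eqref{processunitial}. Where you diverge is in the bookkeeping of the telescoping. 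The paper keeps the time variable of $v$ continuous: it sums the one-parameter family of inequalities over the single index $k$ on a fixed window $[a,b]$, obtains in the limit a two-continuous-time-variable inequality for $\varpi(s,t)=\int_0^1\|v(s,\alpha)-u(t)\|\,d\alpha$, and concludes by the monotonicity of $\Xi(t)=\varpi(t,t)-\int_0^t\Pi$. You instead discretize $v$'s time as well and run the doubly-indexed recursion $b_{i,j}\le\frac{\sigma_i}{\sigma_i+\sigma_j}b_{i,j-1}+\frac{\sigma_j}{\sigma_i+\sigma_j}b_{i-1,j}+\dots$ along the diagonal. That recursion is indeed derivable from your displayed inequality, but only after replacing $\frac{1}{\sigma_i}\int_{t_{j-1}}^{t_j}\int_0^1\bigl(\|v(\tau,\alpha)-u_{i-1}\|-\|v(\tau,\alpha)-u_i\|\bigr)\,d\alpha\,d\tau$ by $\frac{\sigma_j}{\sigma_i}(b_{i-1,j}-b_{i,j})$, which requires some a.e.\ time-continuity of $\tau\mapsto\int_0^1\|v(\tau,\alpha)-\hat\nu\|\,d\alpha$ (available, since \eqref{intprocsolu} makes this function a.e.\ equal to a function of bounded variation, but it is an extra step the paper's single-index telescoping avoids entirely). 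Also note that the inequality you invoke, $[u_i-u_{i-1},p_i-p_{i-1}]\ge0$, is not the operative ingredient here: no accretivity between distinct discretization steps is needed, only the membership $(u_i,p_i)\in A$ and the bracket identities above; the cancellation of the $p_i$ is purely algebraic. With those two points tidied up, your argument closes correctly and yields \eqref{intpro}.
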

\begin{proof}
Let $u_k^n$, $k=1,..., N_n$ be a  solution of the $\sigma_n$ discretization $D_A(0=t_0^n,t_1^n,....,t_{N_n}^n)$ of $u'+Au\ni h$ on $[0,T]$. Set $\delta^n_k=t^n_k-t^n_{k-1}$ and let $0\leq a\leq b\leq T$. Since $v$ is an integral-process solution of $v'(t,\alpha)+Av\ni g$ we have:
\begin{align}\label{semi1}
&\displaystyle\int_0^1\biggl(||v(b,\alpha)-u_k^n||-||v(a,\alpha)-u_k^n||\biggr)d\alpha\nonumber\\&\leq\displaystyle\int_0^1\int_a^b\biggl[v(\tau,\alpha)-u^n_k, g(\tau)-h^n_k+\frac{u_k^n-u^n_{k-1}}{\delta^n_k}\biggr]d\alpha d\tau\nonumber\\&\leq\displaystyle\int_0^1\int_a^b\biggl[v(\tau,\alpha)-u^n_k, g(\tau)-h^n_k\biggr]d\alpha d\tau\nonumber\\&+\frac{1}{\delta^n_k}\displaystyle\int_0^1\int_a^b\biggl(||v(\tau,\alpha)-u_{k-1}^n||-||v(\tau,\alpha)-u_k^n||\biggr)d\alpha d\tau.
\end{align}
Where we have used the inequality
\begin{align}%\label{semi1}
\biggl[v(\tau,\alpha)-u^n_k, g(\tau)-h^n_k+\frac{u_k^n-u^n_{k-1}}{\delta^n_k}\biggr]&\leq\biggl[v(\tau,\alpha)-u^n_k, g(\tau)-h^n_k\biggr]\nonumber\\&+\frac{1}{\delta^n_k}\biggl(||v(\tau,\alpha)-u_{k-1}^n||-||v(\tau,\alpha)-u_k^n||\biggr)
\end{align}
which follows from the facts that $\biggl[X,Y+Z\biggr]\leq\biggl[X,Y\biggr]+\biggl[X,Z\biggr]$; $\biggl[X,eY\biggr]=e\biggl[X,Y\biggr]$ if $e>0$ and $\biggl[X,Y\biggr]\leq\displaystyle\frac{||X+eY||-||X||}{e}$. Multiplying \eqref{semi1} by $\delta_n^k$ and summing over $k=j+1,j+2,...,i$ we find that:
\begin{align}\label{semi2}
&\displaystyle\displaystyle\sum_{k=j+1}^i\displaystyle\int_0^1\delta^n_k \biggl(||v(b,\alpha)-u_k^n||-||v(a,\alpha)-u_k^n||\biggr)d\alpha\nonumber\\\leq&\displaystyle\displaystyle\sum_{k=j+1}^i\delta^n_k\displaystyle\int_0^1\int_a^b||\biggl[v(\tau,\alpha)-u^n_k, g(\tau)-h^n_k\biggr]d\tau d\alpha \nonumber\\&+\displaystyle\int_0^1\int_a^b\biggl(||v(\tau,\alpha)-u_j^n||-||v(\tau,\alpha)-u_i^n||\biggr) d\tau d\alpha.
\end{align}
Next, we assume that $\sigma_n\rightarrow0$ and the $\sigma_n-$ approximate solution of $u'+Au\ni h$ locally converge uniformly to the mild solution $u$ on $[0,T[$. Set 
\begin{align*}
\phi_n(\iota,\lambda,\alpha)=||v(\iota,\alpha)-u_{k_{(\lambda)}}^n||\mbox{ for } 0\leq \iota\leq T;\mbox{ where } k(\lambda) \mbox{ is defined by } t_{k_{(\lambda)}-1}^n<\lambda\leq t_{k_{(\lambda)}}^n.
\end{align*}
Then $\phi_n(\iota,\lambda,\alpha)\rightarrow||v(\iota,\alpha)-u(\lambda)||$ uniformly on $[0,T[\times[0,T[\times[0,1]$. Hence
\begin{align}
\biggl|||v(\iota,\alpha)-u(\lambda)||-||v(\iota,\alpha)-u_{k_{(\lambda)}}^n||\biggr|\leq||u_{k_{(\lambda)}}^n-u(\lambda)||\rightarrow0.
\end{align}
Therefore, if we choose $i,j$ depending on $n$ so that $t_j^n\rightarrow c$, $t_i^n\rightarrow d$ as $n\rightarrow\infty$ we have
\begin{align}
\displaystyle\sum\nolimits_{k=j+1}^i\delta_k^n||v(\iota,\alpha)-u_{k}^n||\rightarrow\int_c^d||v(\iota,\alpha)-u(\lambda)||d\lambda\mbox{ for } \iota\in[0,T].
\end{align}
Moreover with $\iota=\tau$, we get
\begin{align}\label{iota1}
\displaystyle\int_a^b||v(\tau,\alpha)-u^n_j||d\tau\rightarrow\displaystyle\int_a^b||v(\tau,\alpha)-u(c)||d\tau\mbox{ and }
\end{align}
\begin{align}\label{iota2}
\displaystyle\int_a^b||v(\tau,\alpha)-u^n_i||d\tau\rightarrow\displaystyle\int_a^b||v(\tau,\alpha)-u(d)||d\tau.
\end{align}
From now, let
\begin{align}
F_n(\lambda,\alpha)=\int_a^b\biggl[v(\tau,\alpha)-u_k^n,g(\tau)-h^n_k\biggr]d\tau\mbox{ for } t^n_{k-1}<\lambda\leq t_k^n.
\end{align}
Then
\begin{align}
&\biggl|\displaystyle\displaystyle\sum_{k=j+1}^i\biggl(\delta_k^nF_n(\lambda,\alpha)-\int_{t^n_{k-1}}^{t^n_k}\int_a^b\biggl[v(\tau,\alpha)-u_k^n,g(\tau)-h(\lambda)\biggr]d\tau d\lambda\biggr)\biggr|\nonumber\\
&\leq\displaystyle\sum_{k=j+1}^i\int_{t^n_{k-1}}^{t^n_k}\int_a^b||h_k^n-h(\lambda)||d\tau d\lambda\leq\sigma_n(b-a).
\end{align}
and therefore 
\begin{align}\label{semi6}
\mathop{\overline{\lim}}\limits_{n\rightarrow\infty}\!\!\displaystyle\sum_{k=j+1}^i\!\!\delta_k^n F_n(\lambda,\alpha)\!=\!\mathop{\overline{\lim}}\limits_{n\rightarrow\infty}\!\!\displaystyle\sum_{k=j+1}^i\!\int_{t^n_{k-1}}^{t_n^k}\!\int_a^b\biggl[v(\tau,\alpha)-u_k^n,g(\tau)-h(\lambda)\biggr]d\tau d\lambda.
\end{align}
Since $u_k^n\rightarrow u(\lambda)$ and $t^n_k\rightarrow \lambda$ as $n\rightarrow 0$ and  the bracket $\biggl[\cdot,\cdot\biggr]$ is the upper-semicontinuous, we deduce from \eqref{semi6} that
\begin{align}\label{semi7}
&\mathop{\overline{\lim}}\limits_{n\rightarrow\infty}\displaystyle\sum_{k=j+1}^i\delta_k^n\int_a^b\biggl[v(\tau,\alpha)-u_k^n,g(\tau)-h^n_k\biggr]d\tau=\mathop{\overline{\lim}}\limits_{n\rightarrow\infty}\int_{t_j^n}^{t_i^n}F_n(\lambda,\alpha) d\lambda \nonumber\\&\leq\int_c^d\int_a^b\biggl[v(\tau,\alpha)-u(\lambda),g(\tau)-h(\lambda)\biggr]d\tau d\lambda.
\end{align}
As previously, the convergence is uniform in $\alpha\in[0,1]$, therefore we can integrate in $\alpha$ under the limite in \eqref{iota1}, \eqref{iota2}, \eqref{semi7} and obtain
%finally using \eqref{intg1}, \eqref{intg2}, \eqref{semi6} we can pass to the limit in to \eqref{semi2} conclude that:
\begin{align}
&\displaystyle\int_0^1\int_c^d\biggl(||v(b,\alpha)-u(\lambda)||-||v(a,\alpha)-u(\lambda)||\biggr)d\lambda d\alpha\nonumber\\&\leq\displaystyle\int_0^1\int_c^d\int_a^b\biggl[v(\tau,\alpha)-u(\lambda),g(\tau)-h(\lambda)\biggr]d\tau d\lambda d\alpha\nonumber\\&+\displaystyle\int_0^1\int_c^d\biggl(||v(\tau,\alpha)-u(c)||-||v(\tau,\alpha)-u(d)||\biggr)d\tau d\alpha.
\end{align}
Now, we set:
\begin{align*}
&\varpi(s,t,\alpha)=\displaystyle\int_0^1||v(s,\alpha)-u(t)||d\alpha\\
&\Pi(s,t,\alpha)=\displaystyle\int_0^1\biggl[v(s,\alpha)-u(t),g(s)-h(t)\biggr]d\alpha.
\end{align*}
Recall that $u\in\mathcal{C}([0,T];X)$ and $\mbox{ess-}\lim_{t\downarrow0}\int_0^1||v(t,\alpha)-u_0||d\alpha=0$. Then, $v$ is  continuous a.e. for any Lebesgue point on $[0,T]$. The function $\varpi$ and $\Pi$ are continuous in $t$ and integrable in $s$
\begin{align}
\varphi(t,t)-\varphi(s,s)\leq\int_s^t\Pi(\tau)d\tau=\int_0^t\Pi(\tau)d\tau-\int_0^s\Pi(\tau)d\tau.
\end{align}
Then
\begin{align}
\Xi(t)=\varphi(t,t)-\int_0^t\Pi(\tau)d\tau\leq \varphi(s,s)-\int_0^s\Pi(\tau)d\tau=\Xi(s) \mbox{ for a.e. } t,s \in[0,T].
\end{align} 
The function $\Xi$ is continuous at $0^+$, therefore $\Xi(t)\leq\Xi(0)$. This is equivalent to \eqref{intpro}.
\end{proof}
%%%%%%%%%%%%%%%%%%%%%%%%%%%%%%%%%%%%%%%%%%%%%%%%%%%%%%%%%%%%%%%%%%%%%%%%%%%%%%%%%%%%%%%%%%%%%%%%%%%%%%%%%%%%%%%%%%%%%%%%%%%%%%%%%%%%%%%%%%%%%%%%%%%%%%%%%%%%%%%%%%%%%
\section{Appendix 2}
In this appendix, we apply the notion of integral-process solution to the problem $(P)$ and  present a way to prove uniqueness of entropy solution. In \cite{VOV}, the authors introduced a notion of entropy-process solution and using the doubling of variable method of Kruzhkov \cite{KRU} they  proved that entropy solution is the unique entropy-process solution. In our case, we were not able to use the same argument because we need that the entropy solution possess a strong boundary trace on the boundary in order that the doubling of variables apply (see \cite{BF}). Fortunately, under additional assumptions, we can ensure the desired boundary regularity for the associated stationary problem:  
\begin{equation*}
(S)\left \{\begin{array}{rll}
v+\div(f(v)-\nabla\phi(v))&=g  &\mbox{ in } \Omega,\\
\bigl(f(v)-\nabla\phi(v)\bigr).\eta&=0  &\mbox{ on } \partial\Omega.
\end{array} \right.
\end{equation*}
Therefore, firstly we compare the entropy-process solution $\mu$ of $(P)$ to the solution of $(S)$. This suggests the use of nonlinear semigroup theory; more precisely we find that $\mu$ is also an integral-process solution to $u'+Au=0$, $\mu(0,\alpha)=u_0$ with appropriately defined operator $A$. Then, proving the m-accretivity of $A$ and using the Appendix 1 we are able to conclude that $\mu$ is the unique mild and integral solution of the abstract evolution problem. At the last step, we use the result of \cite{BG} which says that such solution is the unique entropy solution of $(P)$.
\begin{prop}\label{carrilloprocessus}
 Let $\xi\in \mathcal{C}^\infty([0,T[\times\mathbb R^\ell)$, $\xi\geq0$. Then for all $k\in]u_{c},u_{\max}]$, for all $D\in\mathbb R^\ell$ and for all entropy-process solution $\mu$ of $(P)$, we have
\begin{align}\label{procarproc}
&\displaystyle\int_0^T\!\!\!\!\int_\Omega\int_0^1\left\{|\mu-k|\xi_t+sign(\mu-k)\Bigl[f(\mu)-f(k)].\nabla\xi\right\} dxdtd\alpha\nonumber\\&-\displaystyle\int_0^T\!\!\!\!\int_\Omega sign(u-k)\biggl(\nabla\phi(u)-D\biggr).\nabla\xi dxdt\nonumber\\&+\displaystyle\int_\Omega|u_0-k|\xi(0,x)dx+\displaystyle\int_0^T\!\!\!\!\int_{\partial\Omega} \left|(f(k)-D).\eta(x)\right|\xi d{\mathcal{H}}^{\ell-1}(x)dt\nonumber\\&\geq\mathop{\overline{\lim}}\limits_{\sigma\rightarrow0}\frac{1}{\sigma}\displaystyle\int\!\!\int_{Q\cap\left\{-\sigma<\phi(u)-\phi(k)<\sigma\right\}}\nabla\phi(u).\biggl(\nabla\phi(u)-D\biggr)\xi dxdt.
\end{align}
\end{prop}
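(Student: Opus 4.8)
The plan is to adapt Carrillo's argument \cite{CAR} to the entropy-process setting, feeding the constant vector $D$ in through an elementary divergence identity. Write $w=\phi(u)$, $c=\phi(k)$, and fix $k\in]u_c,u_{\max}]$. The role of this restriction is that $\phi$ is strictly increasing near $k$ with $c>0$, so $\phi^{-1}$ is continuous at $c$; hence on $\{\,|w-c|<\sigma\,\}$ (for $\sigma<c$) one has $u>u_c$ with $u$ close to $k$, and, globally, $\mathrm{sign}(w-c)=\mathrm{sign}(u-k)$ a.e.\ on $Q$. In particular $\nabla|\phi(u)-\phi(k)|=\mathrm{sign}(u-k)\,\nabla\phi(u)$ a.e., which is already the algebraic shape of the diffusive term of \eqref{procarproc}; what remains is to produce the dissipative term and to absorb $D$.

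First I introduce the Lipschitz regularisation $s_\sigma(r)=\min(1,\max(-1,r/\sigma))$, so that $s_\sigma'(r)=\tfrac1\sigma\mathbf 1_{\{|r|<\sigma\}}$ and $s_\sigma(w-c)\to\mathrm{sign}(u-k)$ boundedly a.e. Since $w\in L^2(0,T;H^1(\Omega))$ by Proposition~\ref{conventropyprocess} and $D$ is constant, the chain rule yields the free identity, for a.e.\ $t$,
\[
\int_\Omega\!\bigl[\,s_\sigma(w-c)\,D\cdot\nabla\xi+s_\sigma'(w-c)\,(D\cdot\nabla w)\,\xi\,\bigr]\,dx=\int_{\partial\Omega}(D\cdot\eta)\,s_\sigma(\gamma w-c)\,\xi\,d\mathcal H^{\ell-1}.
\]
The core of the proof is the Carrillo manipulation carried out jointly on the entropy inequality \eqref{ESP1} and on the boundaryless weak identity \eqref{entropyweakproc}: testing the equation against a Steklov time-average of $s_\sigma(w-c)\xi$, the genuine diffusion produces the interior term $-\tfrac1\sigma\!\int\!\!\int_{\{|w-c|<\sigma\}}|\nabla\phi(u)|^2\xi$, while the convective and evolution contributions reassemble, using $\mathrm{sign}(w-c)=\mathrm{sign}(u-k)$, into the Kruzhkov-type expression already present in \eqref{ESP1}. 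Adding the free identity above, whose interior part equals $-\tfrac1\sigma\!\int\!\!\int_{\{|w-c|<\sigma\}}(D\cdot\nabla w)\xi$, upgrades the interior diffusive residue to precisely $-\tfrac1\sigma\!\int\!\!\int_{\{|w-c|<\sigma\}}\nabla\phi(u)\cdot(\nabla\phi(u)-D)\,\xi$.

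The boundary contributions must then be assembled, and this uses the zero-flux structure in an essential way. Collecting the boundary term $\int_{\partial\Omega}|f(k)\cdot\eta|\xi$ carried by \eqref{ESP1} together with the boundary part $\int\!\!\int_{\partial\Omega}(D\cdot\eta)s_\sigma(\gamma w-c)\xi$ of the free identity, these combine, via the zero-flux relation $(f(u)-\nabla\phi(u))\cdot\eta=0$ on $\partial\Omega$ — which for the process solution is exactly what the no-boundary-term identity \eqref{entropyweakproc} encodes — into the normal trace of $s_\sigma(\gamma w-c)\,[\,f(u)-f(k)-(\nabla\phi(u)-D)\,]$, which collapses to $-\,s_\sigma(\gamma w-c)\,(f(k)-D)\cdot\eta$ and is therefore bounded above by $|(f(k)-D)\cdot\eta|$ uniformly in $\sigma$; this is the origin of $\int\!\!\int_{\partial\Omega}|(f(k)-D)\cdot\eta|\,\xi$ in \eqref{procarproc}. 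Letting $\sigma\downarrow0$, every term except the dissipative one converges (by boundedness of $s_\sigma$, dominated convergence, and the weak $L^2$ convergence of $\nabla_{\mathcal O}\phi(\uOdt)$ of Proposition~\ref{conventropyprocess}), whereas $\tfrac1\sigma\!\int\!\!\int_{\{|w-c|<\sigma\}}\nabla\phi(u)\cdot(\nabla\phi(u)-D)\xi$ need not converge; since for each $\sigma>0$ it sits on the side of the estimate that forces $\overline{\lim}$ to be the correct one-sided operation, one obtains \eqref{procarproc} exactly as stated. By the coarea formula and a Lebesgue-point argument this $\overline{\lim}$ is in fact an honest limit for a.e.\ $k$, but only the $\overline{\lim}$ form is valid for every $k\in]u_c,u_{\max}]$.

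I expect the main obstacle to be precisely the boundary term. Because the zero-flux problem provides no strong trace for $\phi(u)$ on $\partial\Omega$ — this lack of regularity being the very reason for the present semigroup detour — the reduction of the normal trace of the modified entropy flux to $-\,s_\sigma(\gamma w-c)(f(k)-D)\cdot\eta$ cannot be carried out by a direct trace computation and must be extracted from the weak identity \eqref{entropyweakproc} itself, for instance by testing it against $s_\sigma(w-c)\,\xi\,\chi_\delta$ with $\chi_\delta$ cutting off an $O(\delta)$-neighbourhood of $\partial\Omega$ and sending $\delta\downarrow0$ after $\sigma$. A lesser technical point is that $s_\sigma(\phi(u)-\phi(k))\xi$ is not directly an admissible test function in \eqref{entropyweakproc}, since $\partial_t\phi(u)$ need not lie in $L^1$; this is handled in the standard way by Steklov averaging in time, or, equivalently, by performing the whole manipulation at the discrete level on \eqref{esti1} and passing to the limit at the end.
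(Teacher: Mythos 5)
Your proposal follows essentially the same route as the paper's proof: both test the weak formulation of the equation satisfied by $u-k$ (with the constant vector $D$ absorbed through a trivial divergence identity, since $\div(f(k)-D)=0$) against $H_\sigma(\phi(u)-\phi(k))\,\xi$, handle the time term by the Alt--Luckhaus weak chain rule / Steklov averaging, keep the dissipative residue as a $\mathop{\overline{\lim}}$, and exploit that for $k>u_c$ the quantity $sign(\mu(\cdot,\alpha)-k)$ is independent of $\alpha$ and equal to $sign(u-k)$ (because $\phi(\mu(\alpha))=\phi(u)$ and $\phi$ is strictly increasing beyond $u_c$) in order to reassemble the Kruzhkov terms in process form. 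The paper defers the boundary and remaining cross terms to \cite{BG}, whereas you discuss them explicitly, but the underlying argument is the same.
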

\begin{proof}
The proof follows the arguments of \cite{BG}. Let us recall that if $\mu(t,x,\alpha)$ is entropy-process solution and \eqref{f} hold, then $u(t,x)=\displaystyle\int_0^1\mu(t,x,\alpha)d\alpha$ satisfies in the weak sense for all  $k\in[u_c, u_{\max}]$ and all $D\in\mathbb R^\ell$:
\begin{equation*}
(P_k)\left \{\begin{array}{rll}
(u-k)_t+\div \biggl[\biggl(\displaystyle\int_0^1f(\mu)d\alpha-\nabla\phi(u)\biggr)-\biggl(f(k)-D\biggr)\biggr]&=0  &\mbox{ in } Q,\\
 \mbox{ess-}\displaystyle\lim_{t\rightarrow0^+}(u(t,x)-k)&= u_{0}(x)-k  &\mbox{ on }\Omega,\\
\biggl[\biggl(\displaystyle\int_0^1f(\mu)d\alpha-\nabla\phi(u)\biggr)-\biggl(f(k)-D\biggr)\biggr].\eta&=-(f(k)-D).\eta  &\mbox{ on } \Sigma.
\end{array} \right.
\end{equation*}
Take the test function $sign_{\sigma}(\phi(u)-\phi(k))\xi=H_\sigma(\phi(u)-\phi(k))\xi$ in the weak formulation of this problem with $\xi\in \mathcal{C}^\infty([0,T)\times\mathbb R^\ell)$. Using the formalism of \cite{ALT}, we have
\begin{align}\label{pro1}
&\displaystyle\int_0^T\left\langle (u-k)_t,H_{\sigma}(\phi(u)-\phi(k))\xi\right\rangle_{H^1(\Omega)^*,H^1(\Omega)} dt\nonumber\\&-\int_0^T\!\!\!\!\int_\Omega H_{\sigma}(\phi(u)-\phi(k))\Bigl[\Bigl(\int_0^1f(\mu)d\alpha-\nabla\phi(u)\Bigr)-\Bigl(f(k)-D\Bigr)\Bigr].\nabla\xi\nonumber\\&-\int_0^T\!\!\!\!\int_\Omega\xi\Bigl[\Bigl(\int_0^1f(\mu)d\alpha-\nabla\phi(u)\Bigr)-\Bigl(f(k)-D\Bigr)\Bigr].\nabla H_{\sigma}(\phi(u)-\phi(k))\nonumber\\&-\int_0^T\!\!\!\!\int_{\partial\Omega}H_{\sigma}(\phi(u)-\phi(k))(f(k)-D).\eta\xi=0.
\end{align}
 By the weak chain rule (see \cite{ALT})
\begin{align}
\displaystyle\int_0^T\left\langle (u-k)_t,H_{\sigma}(\phi(u)-\phi(k))\xi\right\rangle_{H^1(\Omega)^*,H^1(\Omega)}dt=&-\displaystyle\int_0^T\!\!\displaystyle\int_\Omega I_\sigma(u)\xi_t dtdx\nonumber\\&-\displaystyle\int_\Omega I_\sigma(u_0)\xi(0,x)dx
\end{align}
where: $H_{\sigma}(r)=\left \{\begin{array}{rll}
  1  &\mbox{ if } \; r>\sigma,\\
  \frac{r}{\sigma} &\mbox{ if } \; |r|\leq\sigma,\\
  -1  &\mbox{ if } \; r<-\sigma,
\end{array} \right.$ and
\begin{align}
I_\sigma:z\longmapsto\displaystyle\int_k^zH_{\sigma}(\phi(s)-\phi(k))ds\longrightarrow|z-k| \mbox{ as }\sigma\rightarrow0.
\end{align}
Then, after passing to the limit as $\sigma\rightarrow0$, we have
\begin{align}
\int_Q|u-k|\xi_tdxdt&=-\int_Qsign(u-k)(u-k)\xi_tdxdt\nonumber\\&=-\int_Q sign(u-k)\biggl(\int_0^1\mu d\alpha-k\biggr)\xi_tdxdt
\end{align}
Now,notice that because $k\in]u_c,u_{\max}[$ and because $\phi(\mu(\alpha))=const$ on $[0,1]$ we find that $sign(\mu(\alpha)-k)$ is constant on $[0,1]$ equal to $sign(u-k)$
Then , we see that
\begin{align}
\int_Q|u-k|\xi_tdxdt=-\int_Q\int_0^1|\mu-k|\xi_td\alpha dxdt
\end{align}
Similarly, we see 
\begin{align}
&\displaystyle\int_0^T\displaystyle\int_0^1\int_\Omega sign(u-k)\Bigl[f(\mu)-f(k)\Bigr].\nabla\xi dxdt\nonumber\\&=\displaystyle\int_0^T\int_\Omega\int_0^1 sign(\mu-k)\Bigl[f(\mu)-f(k)\Bigr].\nabla\xi dxdtd\alpha
\end{align}
For treatment of the others terms, we refer to \cite{BG}.
\end{proof}
Let us firstly prove that the initial datum is satisfied in the sense of \eqref{processunitial} (see Appendix 1). This means that the entropy-process solution satisfies the initial condition of integral-process solution. 
\begin{lem}
Let $v$ be an entropy-process solution of $(P)$ with initial datum $v_0\in L^\infty$. Then the initial datum is taken in the following sense:
\begin{align}
\displaystyle\lim_{s\downarrow0}\displaystyle\int_0^s\int_{\Omega}\int_0^1|v-v_0| dt dxd\alpha=0.
\end{align} 
\end{lem}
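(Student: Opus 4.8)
The plan is to test the entropy-process inequality \eqref{ESP1} of Definition~\ref{entrprosol} against functions that concentrate near $t=0$, read off from it a one-sided bound on the Ces\`aro average of $\int_\Omega\int_0^1|v-v_0|$ for test constants $k$, and then pass from constants to $v_0$ by approximating $v_0$ in $L^1(\Omega)$ by a simple function. Throughout I write $u(t,x)=\int_0^1\mu(t,x,\alpha)\,d\alpha$ and $v=\mu$, and use that $v,v_0$ take values in $[0,u_{\max}]$ and $\phi(u)\in L^2(0,T;H^1(\Omega))$.

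In the first step I would fix a constant $k\in[0,u_{\max}]$ and $\omega\in\mathcal C^\infty_c(\mathbb R^\ell)$, $\omega\ge0$, and insert into \eqref{ESP1} the test function $\xi(t,x)=\omega(x)\,\theta_s(t)$, where $\theta_s(0)=1$, $\theta_s$ is affine with slope $-1/s$ on $(0,s)$, and $\theta_s\equiv0$ on $[s,T)$; this Lipschitz choice is admissible after a routine mollification of $\theta_s$, since every term of \eqref{ESP1} is continuous in $\xi$ for a $W^{1,1}$-type topology once the $L^\infty$ bound on $v$ and $\phi(u)\in L^2(0,T;H^1)$ are used. Because $\xi_t=-\tfrac1s\mathbf 1_{(0,s)}\,\omega$ and $\nabla\xi=\theta_s\nabla\omega$ with $0\le\theta_s\le1$, \eqref{ESP1} rearranges to
$$\frac1s\int_0^s\!\!\int_\Omega\!\!\int_0^1|v-k|\,\omega\ \le\ R_s+\int_\Omega|v_0-k|\,\omega,$$
where $R_s$ collects the convective integral $\int_0^s\theta_s\int_\Omega\int_0^1\mathrm{sign}(v-k)[f(v)-f(k)]\!\cdot\!\nabla\omega$, the diffusive integral $-\int_0^s\theta_s\int_\Omega\nabla|\phi(u)-\phi(k)|\!\cdot\!\nabla\omega$, and the nonnegative boundary integral $\int_0^s\theta_s\int_{\partial\Omega}|f(k)\!\cdot\!\eta|\,\omega$. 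Since $|f(v)-f(k)|\,|\nabla\omega|$ and $|\nabla\phi(u)|\,|\nabla\omega|$ lie in $L^1(Q)$ and $|\theta_s|\le1$, absolute continuity of the Lebesgue integral gives $R_s\to0$ as $s\downarrow0$, so that for every $k\in[0,u_{\max}]$ and every admissible $\omega$,
$$\limsup_{s\downarrow0}\ \frac1s\int_0^s\!\!\int_\Omega\!\!\int_0^1|v(t,x,\alpha)-k|\,\omega(x)\,d\alpha\,dx\,dt\ \le\ \int_\Omega|v_0(x)-k|\,\omega(x)\,dx.$$

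In the second step I would fix $\varepsilon>0$, choose a simple function $\bar v=\sum_{j=1}^m k_j\mathbf 1_{E_j}$ (with $\{E_j\}$ a measurable partition of $\Omega$ and $k_j\in[0,u_{\max}]$) such that $\|v_0-\bar v\|_{L^1(\Omega)}<\varepsilon$, and for each $j$ pick $\omega_j\in\mathcal C^\infty_c(\mathbb R^\ell)$ with $\mathbf 1_{E_j}\le\omega_j$, $\omega_j\ge0$, and $\int_\Omega(\omega_j-\mathbf 1_{E_j})<\varepsilon/(m\,u_{\max})$, using outer regularity of Lebesgue measure. From $|v-v_0|\le\sum_j\mathbf 1_{E_j}|v-k_j|+|\bar v-v_0|$ and $\mathbf 1_{E_j}\le\omega_j$ one gets
$$\frac1s\int_0^s\!\!\int_\Omega\!\!\int_0^1|v-v_0|\ \le\ \sum_{j=1}^m\frac1s\int_0^s\!\!\int_\Omega\!\!\int_0^1|v-k_j|\,\omega_j\ +\ \|v_0-\bar v\|_{L^1(\Omega)},$$
and then taking $\limsup_{s\downarrow0}$, applying the bound from the first step to each summand, and using $\int_\Omega|v_0-k_j|\,\omega_j\le\int_{E_j}|v_0-k_j|+u_{\max}\int_\Omega(\omega_j-\mathbf 1_{E_j})$, I would conclude $\limsup_{s\downarrow0}\tfrac1s\int_0^s\int_\Omega\int_0^1|v-v_0|\le2\|v_0-\bar v\|_{L^1(\Omega)}+\varepsilon\le3\varepsilon$. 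Letting $\varepsilon\downarrow0$ shows that even the Ces\`aro average $\tfrac1s\int_0^s\int_\Omega\int_0^1|v-v_0|\,dt\,dx\,d\alpha$ tends to $0$, which gives the stated limit a fortiori.

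I expect the main difficulty to be the bookkeeping in the second step, and in particular checking that one never needs a lower bound or a pointwise-in-$t$ version of the first-step inequality (the entropy inequality controls only the Ces\`aro average, from above, which is why one uses the full Kruzhkov-type entropy $|v-k|$ and the absolute-value boundary term rather than sub-/super-pieces alone). If instead the initial datum is to be attained in the essential-limit sense of \eqref{processunitial}, there is an additional step — upgrading Ces\`aro convergence to an essential limit — which I would carry out by combining the above with the weak-in-time continuity of $t\mapsto\int_0^1\mu(t,\cdot,\alpha)\,d\alpha$ furnished by the weak formulation of Remark~\ref{entrprosol1}; this is where I would expect the remaining real obstacle.
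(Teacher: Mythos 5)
Your proposal follows essentially the same route as the paper's proof (an adaptation of Panov's argument): test \eqref{ESP1} with $\xi(t,x)=(1-\tfrac ts)^+\omega(x)$, deduce that the Ces\`aro averages $\frac1s\int_0^s\int_\Omega\int_0^1|v-k|\,\omega$ are asymptotically dominated by $\int_\Omega|v_0-k|\,\omega$ (the convective, diffusive and boundary remainders all vanishing with $s$, exactly as you argue), and then pass from constants $k$ to $v_0$ through a simple-function approximation. The first step and the overall bookkeeping are correct.

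One step, however, fails as written: you majorize $\mathbf 1_{E_j}$ by a smooth $\omega_j$ with $\int_\Omega(\omega_j-\mathbf 1_{E_j})$ arbitrarily small, invoking outer regularity. For a general measurable $E_j$ this is impossible — if $E_j$ (or the open set containing it that outer regularity provides) is dense in some ball, then any continuous $\omega_j\geq \mathbf 1_{E_j}$ satisfies $\omega_j\geq 1$ on that ball, so $\int\omega_j$ cannot approach $m(E_j)$. The paper sidesteps this by not manipulating limsups at all: along a sequence $s_m\downarrow 0$ it extracts weak-$*$ limits $\Psi(\cdot,c)$ of the averages $\Psi_{s_m}(\cdot,c)$ for all rational $c$, converts the tested inequality into the a.e. pointwise bound $\Psi(\cdot,c)\leq|u_0-c|$, and is then free to integrate that bound against the indicator of an arbitrary measurable partition element. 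Your version is repaired most cheaply by taking the partition $\{E_j\}$ to consist of sets with negligible boundary (e.g. dyadic cubes with $k_j=\fint_{E_j}v_0$), for which admissible smooth majorants do exist; alternatively, adopt the paper's weak-$*$ extraction. Your closing caveat is well placed: both your argument and the paper's establish only the Ces\`aro form of initial-data attainment, and the upgrade to the essential-limit sense \eqref{processunitial} required by the definition of integral-process solution is indeed a separate step not contained in this lemma.
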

\begin{proof}
The proof follows the one of Panov in (\cite[Proposition 1]{PAN2}). For $c\in\mathbb R$ and $s>0$, consider the functions
\begin{align}\label{ini}
\Psi_s(.,c):x\in\Omega\longmapsto\frac{1}{s}\int_0^s\int_0^1|v(t,x,\alpha)-c|dtd\alpha.
\end{align}
Because $v$ is bounded, the set $(\Psi_s(.,c))_{s>0}$ is bounded in $L^\infty(\Omega)$. Therefore for any sequence $s_m\rightarrow0$, there exists a subsequence such that for all $c\in\mathbb Q$, $(\Psi_s(.,c))_{s>0}$ converges in $L^\infty(\Omega)$ weak star to some limit denoted by $(\Psi(.,c))$. Fix $\xi\in\mathcal{D}(\Omega)^+$. From Remark \ref{entrprosol} with test function $\tilde{\xi}(t,x):=(1-\frac{t}{s})^+\xi(x)$
we readily infer the inequalities
\begin{align}\label{in}
\forall c\in\mathbb Q, \int_\Omega\Psi(x,c)\xi(x)dx\leq\int_\Omega|u_0-c|\xi(x)dx.
\end{align} 
By the density argument, we extend \eqref{in} to all $\xi\in L^1(\Omega)$, $\xi\geq 0$. Now for all $\epsilon>0$, there exists a number $N(\epsilon)\in\mathbb N$, a collection $(c_i^\epsilon)_{j=1}^{N(\epsilon)}\subset\mathbb Q$ and a partition of $\Omega$ into disjoint union of measurable sets $\Omega_1^\epsilon,...,\Omega_{N(\epsilon)}^\epsilon$ such that $||v_0-v_0^\epsilon||_{L^1}\leq \epsilon$, where $v_0^\epsilon:=\displaystyle\sum_{j=1}^{N(\epsilon)}c_j^\epsilon 1_{\Omega_j^\epsilon}$. Because $1_\Omega=\displaystyle\sum_{j=1}^{N(\epsilon)}1_{\Omega_j^\epsilon}$, applying \eqref{in} with $c=c_j^\epsilon$ and $\xi=1_{\Omega_j^\epsilon}$ we deduce
\begin{align*}
\displaystyle\lim_{m\rightarrow\infty}\frac{1}{s_m}\int_0^{s_m}\int_\Omega\int_0^1|v-v_0^\epsilon|dtdxd\alpha&=\displaystyle\lim_{m\rightarrow\infty}\int_\Omega\displaystyle\sum_{j=1}^{N(\epsilon)}\Psi_{s_m}(x,c_j^\epsilon)1_{\Omega_j^\epsilon}dx\\&=\int_\Omega\displaystyle\sum_{j=1}^{N(\epsilon)}\Psi(x,c_j^\epsilon)1_{\Omega_j^\epsilon}dx\\&\leq\int_\Omega\displaystyle\sum_{j=1}^{N(\epsilon)}|v_0-c_j^\epsilon|1_{\Omega_j^\epsilon}dx=||v_0-v_0^\epsilon||_{L^1}\leq\epsilon.
\end{align*} 
Using once more the bound $||v_0-v_0^\epsilon||_{L^1}\leq\epsilon$ (in the first term of the previous calculation), we can send $\epsilon$ to zero and infer the analogue of \eqref{ini}, with a limit taken along some subsequence of $(s_m)_{m>1}$. Because $(s_m)_{m>1}$ was an arbitrary sequence convergent to zero, \eqref{ini} is justified.
\end{proof}
Now it remains  to prove that the entropy-process solution an is integral-process solution. Let us define the (possibly multivalued) operator $A_{f,\phi}$ by it resolvent
\begin{equation*}
(v,z)\!\in A_{f,\phi}=\!\left \{\begin{array}{ll}
 \!\! v \mbox{ such that } v \mbox{ is an entropy solution of } (S_1), \mbox{ with } g=v+z.\\
\mbox{ and strong $L^1$ trace of } (f(u)-\nabla\phi(u)).\eta|_{\partial\Omega}\mbox{ exists}\\
\mbox{ and equal to zero}.
\end{array} \right\}.
\end{equation*}
\begin{defn}
The normal component of the flux $\mathcal{F}[u]=(f(u)-\nabla\phi(u)).\eta$ has a $L^1$ strong trace $\gamma \mathcal{F}[\hat{u}]\in L^1_{Loc}(\partial\Omega)$, at boundary $\partial\Omega$ if
\begin{align}
\lim_{s\rightarrow0}\frac{1}{s}\int_0^s\int_{\hat{x}\in\partial\Omega}\xi(\hat{x})|\mathcal{F}[u](s,\hat{x})-\gamma \mathcal{F}[u](\hat{x})|d\hat{x}d\tau=0.
\end{align} 
\end{defn}
After having defined this operator, we present the following results.
\begin{thm}\label{unicite}
Assume that $A_{f,\phi}$ is m-acccretive densely defined on $L^1(\Omega;[0,u_{\max}])$. Then the entropy-process solution is the unique entropy solution.
\end{thm}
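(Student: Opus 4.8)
The plan is to identify any entropy-process solution $\mu$ of $(P)$ with an \emph{integral-process} solution of the abstract Cauchy problem $v'+A_{f,\phi}v\ni 0$, $v(0,\alpha)=u_0$, and then quote the results of Appendix~1. Since $A_{f,\phi}$ is assumed m-accretive and densely defined, the Crandall--Liggett theory recalled in Appendix~1 yields a unique mild ($=$ integral) solution $u$ of $u'+A_{f,\phi}u\ni 0$, $u(0)=u_0$, for every $u_0\in\overline{D}(A_{f,\phi})=L^1(\Omega;[0,u_{\max}])$; by the characterisation of \cite{BG}, this $u$ is exactly the unique entropy solution of $(P)$. Hence it is enough to prove that \emph{any} entropy-process solution $\mu$ satisfies \eqref{intprocsolu}--\eqref{processunitial} with $g\equiv 0$, $\nu_0=u_0$: the Theorem of Appendix~1 then forces $\mu$ to be independent of $\alpha$ and to coincide with $u$, which gives at once uniqueness of the entropy-process solution and, since every entropy solution is an $\alpha$-independent entropy-process solution, uniqueness of the entropy solution.

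The heart of the argument is the verification of \eqref{intprocsolu}. I would fix $(\hat\nu,z)\in A_{f,\phi}$; by definition $\hat\nu$ is a trace-regular entropy solution of $(S)$ with $g=\hat\nu+z$, so $z=\div\bigl(f(\hat\nu)-\nabla\phi(\hat\nu)\bigr)$ in $\Omega$ and $\bigl(f(\hat\nu)-\nabla\phi(\hat\nu)\bigr).\eta=0$ as a strong $L^1$ trace on $\partial\Omega$. Then I would apply Proposition~\ref{carrilloprocessus} to $\mu$ with the constant and the vector frozen at $k=\hat\nu(y)$, $D=\nabla\phi(\hat\nu)(y)$ and a doubling test function $\xi(t,x,y)$, simultaneously use the stationary Carrillo-type entropy inequality for $\hat\nu$ in the variable $y$ with constant $\mu(t,x,\alpha)$ and the same $\xi$, add the two, mollify $x$ against $y$, and let the mollification parameter tend to $0$. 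The degenerate-diffusion terms — the $\limsup$ over $\{-\sigma<\phi(u)-\phi(k)<\sigma\}$ in \eqref{procarproc} together with its stationary counterpart — cancel in the limit by the standard Carrillo computation, using $\phi(\mu)=\phi(u)$ a.e. (Lemma~\ref{weakstearphi}) and $\nabla\phi(u)=\nabla\phi(\hat\nu)$ on $\{\phi(u)=\phi(\hat\nu)\}$.

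Choosing finally $\xi(t,x)=\theta_{s,t}(\tau)\zeta(x)$, with $\zeta\uparrow 1$ on $\Omega$ and $\theta_{s,t}$ a regularisation of $\mathbf 1_{[s,t]}$, and using the initial-datum Lemma proved just above (which is precisely \eqref{processunitial}), one recovers on the right-hand side the $L^1$-bracket $\int_0^1\int_s^t[v(\tau,\alpha)-\hat\nu,-z]\,d\tau d\alpha$ (here one only needs the elementary bound $-\int_{\{v\neq\hat\nu\}}\mathrm{sign}(v-\hat\nu)z\le[v-\hat\nu,-z]_{L^1}$); this shows $\mu$ is an integral-process solution, and the Theorem of Appendix~1 concludes.

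The main obstacle is the control of the boundary terms in the doubling. As in \cite{BF,BG}, one breaks the symmetry by choosing $\xi$ to vanish for $x\in\partial\Omega$ but not for $y\in\partial\Omega$: this annihilates the boundary contribution $\int_\Sigma|(f(\hat\nu(y))-\nabla\phi(\hat\nu)(y)).\eta(x)|\,\xi$ coming from \eqref{procarproc}, while the boundary contribution from the stationary inequality for $\hat\nu$ is removed in the limit by the strong-trace condition $(f(\hat\nu)-\nabla\phi(\hat\nu)).\eta=0$ built into $(\hat\nu,z)\in A_{f,\phi}$. It is precisely because $\mu$ possesses only a weak normal trace, whereas $\hat\nu$ possesses a strong one, that the limiting process must be arranged so that only the trace of $\hat\nu$ is ever invoked; this is what the m-accretivity of $A_{f,\phi}$ (which encodes the existence of sufficiently many trace-regular stationary solutions) provides, and density of $D(A_{f,\phi})$ is what propagates the conclusion to an arbitrary $u_0\in L^1(\Omega;[0,u_{\max}])$.
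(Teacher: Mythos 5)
Your overall architecture is the same as the paper's: show that an entropy-process solution is an integral-process solution of $v'+A_{f,\phi}v\ni 0$ by doubling variables against the trace-regular stationary solutions that make up the graph of $A_{f,\phi}$, then invoke the equivalence of Appendix~1 and the identification of the mild solution with the entropy solution from \cite{BG}. However, the decisive technical step is carried out in the wrong direction. After summing the evolution inequality \eqref{procarproc} (with $k=\hat\nu(y)$, $D=\nabla\phi(\hat\nu)(y)$) and the stationary inequality for $\hat\nu$ (with $k=\mu(t,x,\alpha)$, $D=\nabla\phi(u)(t,x)$), two boundary terms survive, cf.\ \eqref{u70}: one over $x\in\partial\Omega$ involving $\bigl|(f(\hat\nu(y))-\nabla\phi(\hat\nu)(y)).\eta(x)\bigr|$, i.e.\ the \emph{stationary} flux, and one over $y\in\partial\Omega$ involving $\bigl|(f(\mu(t,x,\alpha))-\nabla\phi(u)(t,x)).\eta(y)\bigr|$, i.e.\ the flux evaluated along the \emph{evolution} solution. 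You propose to take $\xi$ vanishing for $x\in\partial\Omega$, which kills the first term, and to dispose of the second ``by the strong-trace condition built into $(\hat\nu,z)\in A_{f,\phi}$''. That justification does not apply: the second term involves $f(\mu)-\nabla\phi(u)$, not $f(\hat\nu)-\nabla\phi(\hat\nu)$, and as the mollifier concentrates ($x\to y\in\partial\Omega$) you would need the normal flux of the entropy-process solution to admit a vanishing strong trace --- precisely the boundary regularity that is unavailable for the zero-flux problem and that the whole semigroup detour is designed to avoid.

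The paper breaks the symmetry the other way: it chooses $\rho_n$ with $\rho_n|_{\Omega\times\partial\Omega}=0$, so the test function vanishes for $y\in\partial\Omega$ and the term carrying the evolution flux is annihilated identically; the surviving term over $x\in\partial\Omega$ carries the stationary flux $(f(\hat\nu)-\nabla\phi(\hat\nu))(y)$ evaluated at points $y$ approaching $\partial\Omega$, and this tends to zero because the stationary flux is continuous up to the boundary with zero trace (this is exactly what membership in $A_{f,\phi}$, via Propositions \ref{uc-umax}--\ref{proumax}, guarantees). If you reverse your choice of which boundary the test function kills, and correspondingly attribute the vanishing of the remaining term to the boundary regularity of $\hat\nu$ rather than of $\mu$, the rest of your argument (identification of the bracket, the initial-layer lemma giving \eqref{processunitial}, and the appeal to Appendix~1) goes through as in the paper. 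As a minor point, the two parabolic $\limsup$ terms do not cancel; they combine into a single nonnegative quantity on the favourable side of the inequality, which is then discarded.
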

Before turning to the proof of Theorem \ref{unicite}, lets us present three cases where it applies. %(this corresponds to the different cases in Theorem \ref{unite} .
\begin{prop}\label{uc-umax}
Assume \eqref{dimension} holds. Then, $A_{f,\phi}$ is m-acccretive densely defined on $L^1(\Omega;[0,u_{\max}])$.
\end{prop}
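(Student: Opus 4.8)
The plan is to verify the two defining properties of an m-accretive operator --- $L^1$-accretivity of $A_{f,\phi}$ and surjectivity of $I+\lambda A_{f,\phi}$ onto $L^1(\Omega;[0,u_{\max}])$ for every $\lambda>0$ --- together with density of $D(A_{f,\phi})$, exploiting throughout the one-dimensional fact that the total flux of a solution of the stationary problem is \emph{continuous up to the boundary}, so that the zero-flux condition and the strong $L^1$ trace demanded in the definition of $A_{f,\phi}$ come for free. Since replacing $(f,\phi)$ by $(\lambda f,\lambda\phi)$ preserves \eqref{f} and the non-degeneracy assumptions, it suffices to treat $\lambda=1$, i.e.\ the problem $(S)$ with an arbitrary source $g\in L^1(\Omega;[0,u_{\max}])$, the case of a general $\lambda$ being identical after this rescaling; I will write $(S_\lambda)$ for the rescaled problem.

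For the range condition I would construct the entropy solution of $(S)$ by vanishing viscosity. Put $\phi_\varepsilon=\phi+\varepsilon\,\mathrm{Id}$ and solve the uniformly elliptic problem $v_\varepsilon+\bigl(f(v_\varepsilon)-\phi_\varepsilon(v_\varepsilon)_x\bigr)_x=g$ on $(a,b)$ with $f(v_\varepsilon)-\phi_\varepsilon(v_\varepsilon)_x=0$ at $x=a,b$; a weak $H^1$ solution exists by standard monotone-operator or Leray--Schauder arguments (cf.\ \cite{Liberman}), and $0\le v_\varepsilon\le u_{\max}$ by the maximum principle, using $f(0)=f(u_{\max})=0$ and $0\le g\le u_{\max}$. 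Testing with $v_\varepsilon$ and integrating the boundary terms by means of the zero-flux condition gives the uniform bounds $\int_a^b\phi'(v_\varepsilon)|(v_\varepsilon)_x|^2\le C$ (hence $\phi(v_\varepsilon)$ bounded in $H^1(a,b)$) and $\varepsilon\int_a^b|(v_\varepsilon)_x|^2\le C$, with $C$ depending only on $\|g\|_{L^1}$ and $u_{\max}$; moreover, since the resolvent of the $\varepsilon$-problem is an $L^1$-contraction with respect to the datum, comparison with translates yields a uniform bound on $\mathrm{TV}(v_\varepsilon)$ whenever $g\in BV$. By Helly's theorem $v_\varepsilon\to v$ in $L^1$ and a.e., and one passes to the limit in the viscous entropy inequalities to see that $v$ is an entropy solution of $(S)$ with $\phi(v)\in H^1(a,b)$. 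The crucial extra observation is that the total flux $F_\varepsilon:=f(v_\varepsilon)-\phi_\varepsilon(v_\varepsilon)_x$ satisfies $(F_\varepsilon)_x=g-v_\varepsilon$, which is bounded in $L^\infty$ uniformly in $\varepsilon$; hence $(F_\varepsilon)$ is uniformly Lipschitz on $[a,b]$, so by Arzel\`a--Ascoli $F_\varepsilon\to F$ in $C([a,b])$ with $F=f(v)-\phi(v)_x$ and $F(a)=F(b)=0$. Thus $v$ has a strong $L^1$ (in fact pointwise) zero trace of its flux, so $(v,g-v)\in A_{f,\phi}$. For a general $g\in L^1(\Omega;[0,u_{\max}])$ one approximates $g$ by $g_n\in BV\cap[0,u_{\max}]$, uses the $L^1$-contraction to let $v_n\to v$ in $L^1$, and keeps the strong zero trace through the uniform Lipschitz bound $(F_n)_x=g_n-v_n$ together with the uniform $H^1$ bound on $\phi(v_n)$. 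This proves $R(I+\lambda A_{f,\phi})=L^1(\Omega;[0,u_{\max}])$ for every $\lambda>0$.

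For accretivity, let $(v_1,z_1),(v_2,z_2)\in A_{f,\phi}$, so $v_i$ is an entropy solution of $(S)$ with source $g_i=v_i+z_i$ whose flux vanishes on $\partial\Omega$ in the strong sense; note also that each $v_i$ is then an entropy solution of $(S_\mu)$ with source $(1-\mu)v_i+\mu g_i$, for any $\mu>0$. Because both solutions are trace-regular, the Kruzhkov--Carrillo doubling of variables applies directly --- the symmetry-breaking device of Remark~\ref{interessant} is unnecessary --- and the boundary integrals produced by the comparison vanish thanks to the strong zero flux, so that the $L^1$-comparison principle for $(S_\mu)$ holds (same proof for every $\mu>0$); this gives $\|v_1-v_2\|_{L^1}\le\|\,(1-\mu)(v_1-v_2)+\mu(g_1-g_2)\,\|_{L^1}=\|v_1-v_2+\mu(z_1-z_2)\|_{L^1}$ for all $\mu>0$, i.e.\ $A_{f,\phi}$ is (in fact $T$-)accretive. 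Together with the previous step this yields m-accretivity. Finally, for density, given $g\in L^1(\Omega;[0,u_{\max}])$ set $v_\lambda=(I+\lambda A_{f,\phi})^{-1}g\in D(A_{f,\phi})$; for $g$ in the dense class of Lipschitz functions valued in a compact subinterval of $(u_c,u_{\max})$ a standard a priori estimate on $(S_\lambda)$ bounds $\|A_{f,\phi}v_\lambda\|_{L^1}$ uniformly in $\lambda$, so $\|v_\lambda-g\|_{L^1}=\lambda\|A_{f,\phi}v_\lambda\|_{L^1}\to0$, and the $L^1$-contraction of the resolvent extends $v_\lambda\to g$ to all $g$, whence $\overline{D(A_{f,\phi})}^{L^1}=L^1(\Omega;[0,u_{\max}])$. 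The main obstacle is the construction in the second paragraph, namely producing simultaneously the entropy solution of the degenerate stationary problem and the boundary regularity of its flux; once that is in hand, the remaining comparison and density arguments follow those already developed for the stationary problem in \cite{BF,BT,BG}.
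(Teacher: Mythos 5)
The paper offers no proof of Proposition \ref{uc-umax} beyond the citation of \cite[Proposition 4.10]{BG}, and your argument reconstructs exactly the strategy that Remark \ref{interessant} describes and that \cite{BG,BT,BF} carry out: under \eqref{dimension} the stationary flux $F=f(v)-\phi(v)_x$ satisfies $F_x=g-v\in L^\infty(a,b)$, hence $F\in W^{1,\infty}(a,b)\subset C([a,b])$, so the strong zero trace demanded in the definition of $A_{f,\phi}$ comes for free and the doubling of variables closes without any symmetry breaking. Two steps are asserted rather than proved and would need to be filled in from \cite{BT,BG}: (i) the a.e.\ convergence of $v_\varepsilon$ --- the translation argument for a uniform $\mathrm{TV}$ bound is delicate on a bounded interval because the translate violates the boundary condition, and the safer route is the non-degeneracy of $(f,\phi)$ combined with the uniform convergence of $F_\varepsilon$ and the strong $L^2$ convergence of $\phi(v_\varepsilon)$; and (ii) the density of $D(A_{f,\phi})$ --- a smooth $g$ with values in $(u_c,u_{\max})$ is in general \emph{not} in $D(A_{f,\phi})$, since $f(g)-\phi(g)_x$ need not vanish at $x=a,b$, so the uniform bound on $\|A_{f,\phi}v_\lambda\|_{L^1}$ you invoke requires a boundary-layer correction of $g$ rather than a ``standard a priori estimate''.
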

For the proof, we refer to \cite[Proposition 4.10]{BG}.
\begin{prop}\label{prouc}
Assume that, \eqref{conv-uc}, holds.  Then $A_{f,\phi}$ is m-acccretive densely defined on $L^1(\Omega;[0,u_{\max}])$. 
\end{prop}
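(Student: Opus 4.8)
The plan is to follow the scheme used for Proposition~\ref{uc-umax} --- see \cite[Proposition 4.10]{BG} --- and the analysis of the non-degenerate parabolic problem in \cite{BF}, the only genuinely new ingredient being a boundary regularity statement for the stationary problem $(S)$ that is available precisely because $u_c=0$ and $f\circ\phi^{-1}\in\mathcal{C}^{0,\alpha}$. Three properties must be established: accretivity of $A_{f,\phi}$ in $L^1(\Omega;[0,u_{\max}])$; the range condition $R(I+\lambda A_{f,\phi})=L^1(\Omega;[0,u_{\max}])$ for all $\lambda>0$; and density of $D(A_{f,\phi})$. Each of them is reduced to the study of the elliptic problem $(S)$, namely $v+\div f(v)-\Delta\phi(v)=g$ with zero normal flux on $\partial\Omega$ (for the resolvent at parameter $\lambda$ one replaces $f,\phi$ by $\lambda f,\lambda\phi$, which does not affect the hypotheses after the obvious rescaling, so it suffices to treat $\lambda=1$).

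The first and decisive step is the boundary regularity. Since $u_c=0$, $\phi$ is a strictly increasing Lipschitz homeomorphism of $[0,u_{\max}]$ onto $[\phi(0),\phi(u_{\max})]$; writing $w=\phi(v)$, $b=\phi^{-1}$, $F=f\circ b$, the function $b$ is continuous and increasing, $F\in\mathcal{C}^{0,\alpha}$ by hypothesis, and $w$ solves the uniformly elliptic problem
\[
b(w)-\Delta w+\div F(w)=g\ \text{ in }\Omega,\qquad \bigl(\nabla w-F(w)\bigr)\cdot\eta=0\ \text{ on }\partial\Omega .
\]
For $g\in L^\infty(\Omega;[0,u_{\max}])$ one first gets $0\le v\le u_{\max}$ (invariant domain, using $f(0)=f(u_{\max})=0$), hence $w\in H^1(\Omega)\cap L^\infty(\Omega)$; the De Giorgi--Nash--Moser theory up to the boundary for this conormal problem gives $w\in\mathcal{C}^{0,\gamma}(\overline\Omega)$ for some $\gamma>0$, so that $F(w)=f(b(w))\in\mathcal{C}^{0,\alpha\gamma}(\overline\Omega)$ while $g-b(w)\in L^\infty(\Omega)$; then the $\mathcal{C}^{1,\beta}$ estimates of \cite{Liberman} for quasilinear equations under conormal boundary conditions (see also \cite{BT}) upgrade this to $w\in\mathcal{C}^{1,\beta}(\overline\Omega)$. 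Therefore $\nabla w$ and $F(w)$ are continuous up to $\partial\Omega$, so the total normal flux $(f(v)-\nabla\phi(v))\cdot\eta=(F(w)-\nabla w)\cdot\eta$ belongs to $\mathcal{C}(\overline\Omega)$ and, by the boundary condition, vanishes identically on $\partial\Omega$. In particular every entropy solution of $(S)$ with bounded right-hand side has a strong (even continuous) zero trace of the normal flux and thus lies in the graph of $A_{f,\phi}$: the extra clause in the definition of $A_{f,\phi}$ is not restrictive in this case.

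Granting this, accretivity follows from the doubling-of-variables argument of Kruzhkov and Carrillo \cite{KRU,CAR} applied to two stationary entropy solutions $v_1,v_2$ of $(S)$ with data $g_1,g_2$; the boundary contributions produced by the method involve the normal total fluxes, which vanish by the trace property just established (as in the symmetry-breaking device of \cite{BF,BG}), whence $\int_\Omega(v_1-v_2)^{\pm}\le\int_\Omega(g_1-g_2)^{\pm}$ and the $L^1$-contraction $\|v_1-v_2\|_{L^1}\le\|g_1-g_2\|_{L^1}$ (which also yields uniqueness for $(S)$). For the range condition one solves $(S)$: existence of an entropy solution $v\in[0,u_{\max}]$ for any $g\in L^\infty(\Omega;[0,u_{\max}])$ is obtained by a regularization/penalization procedure as in \cite{BG} (or by passing to the limit in the time-implicit scheme of the present paper), and the previous step places $v$ in $D(A_{f,\phi})$; since functions valued in $[0,u_{\max}]$ are automatically bounded, $L^\infty(\Omega;[0,u_{\max}])=L^1(\Omega;[0,u_{\max}])$ and the range condition holds without any further limiting argument. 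Finally, density of $D(A_{f,\phi})$ follows because the resolvents $J_\lambda g\in D(A_{f,\phi})$ satisfy $J_\lambda g\to g$ in $L^1(\Omega)$ as $\lambda\downarrow0$; the convergence is read off the energy estimate $\lambda\int_\Omega|\nabla\phi(v_\lambda)|^2\le C\lambda$ together with $\|v_\lambda-g\|_{L^1}\to0$, obtained by testing the resolvent equation by $v_\lambda-g$ and by $\phi(v_\lambda)-\phi(g)$, and this gives $\overline{D(A_{f,\phi})}=L^1(\Omega;[0,u_{\max}])$.

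The hard part is the boundary-regularity step: it genuinely rests on elliptic theory (De Giorgi--Nash--Moser together with the conormal $\mathcal{C}^{1,\beta}$ estimates of \cite{Liberman}), and it is exactly there that the assumption $f\circ\phi^{-1}\in\mathcal{C}^{0,\alpha}$ is indispensable --- without H\"older continuity of the convective flux in the variable $w=\phi(v)$ the bootstrap from $\mathcal{C}^{0,\gamma}$ to $\mathcal{C}^{1,\beta}$ fails and one cannot assert that the normal flux admits a strong trace. Once this regularity is in hand, the remaining parts (doubling of variables with vanishing boundary term, solvability of $(S)$, density of the domain) are routine and parallel \cite{BF,BG}.
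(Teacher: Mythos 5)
Your proposal follows essentially the same route as the paper's (sketched) proof: reduce everything to the stationary problem $(S)$, change variables to $w=\phi(v)$ so that the convective nonlinearity becomes $f\circ\phi^{-1}\in\mathcal{C}^{0,\alpha}$, invoke Lieberman's conormal boundary regularity \cite{Liberman} to conclude that the total flux $(f(v)-\nabla\phi(v))$ is continuous up to $\partial\Omega$ and hence has a (zero) strong trace, and then run the $\phi=\mathrm{Id}$ arguments of \cite{BF} for accretivity, the range condition and density of the domain. You are in fact slightly more careful than the paper on one point: the paper's sketch records only $w\in\mathcal{C}^{0,\alpha}(\overline\Omega)$ before asserting continuity of the flux, whereas you correctly identify that the $\mathcal{C}^{1,\beta}(\overline\Omega)$ conormal estimates are what is actually needed to make $\nabla w$ continuous up to the boundary.
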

\begin{proof}(sketch)
The proof is essentially the same as in \cite{BF}, where the case $\phi=Id$ has been investigated. For general $\phi$ satisfying $f\circ\phi^{-1}\in\mathcal{C}^{0,\alpha}, \alpha>0$ we adapt the result of Lieberman \cite{Liberman}. As $\phi$ is bijective, we set $w=\phi(u)$ and rewrite the stationary problem as:
\begin{align*}
\div(f\circ\phi^{-1}(w)-\nabla w)=g(x)-\phi^{-1}(w)\Rightarrow \div(B(w,\nabla w))=F(x,w),
\end{align*}
where $B$ and $F$ satisfies the hypothesis of \cite{Liberman}, then $w=\phi(u)\in\mathcal{C}^{0,\alpha}(\bar{\Omega}), \alpha>0$ and $u\in\mathcal{C}^{0,\alpha}(\bar{\Omega})$.We deduce that $(f(u)-\nabla(\phi(u)) \in\mathcal{C}(\bar{\Omega}).$  
\end{proof}
\begin{prop}\label{proumax}
Assume that \eqref{conv-umax} holds. Then $A_{f,\phi}$ is m-acccretive densely defined on $L^1(\Omega;[0,u_{\max}])$.
\end{prop}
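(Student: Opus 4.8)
The plan is to exploit that hypothesis \eqref{conv-umax}, namely $u_c=u_{\max}$, forces $\phi\equiv 0$ on $[0,u_{\max}]$, so that the stationary problem $(S)$ degenerates into the purely hyperbolic stationary conservation law $v+\div f(v)=g$ in $\Omega$ together with the zero-flux condition $f(v)\cdot\eta=0$ on $\partial\Omega$; correspondingly $A_{f,\phi}$ coincides with the operator $A_f$ whose resolvent $(I+\lambda A_f)^{-1}g$ is the entropy solution $v$ of $v+\lambda\div f(v)=g$ with zero flux. The proof then splits into three parts: (i) well-posedness of the stationary problem together with the strong $L^1$ trace property of the normal flux; (ii) accretivity in $L^1$ via the comparison principle; (iii) the range condition and the density of the domain.

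For (i), I would obtain an entropy solution $v$ of the stationary problem by vanishing viscosity: regularize by the non-degenerate elliptic problem $v_\varepsilon+\div\bigl(f(v_\varepsilon)-\varepsilon\nabla v_\varepsilon\bigr)=g$ with $\bigl(f(v_\varepsilon)-\varepsilon\nabla v_\varepsilon\bigr)\cdot\eta=0$ on $\partial\Omega$, whose solvability, the $L^\infty$ bound $0\le v_\varepsilon\le u_{\max}$, and the boundary regularity are classical (this is precisely the framework of \cite{BF} and of the argument of Proposition~\ref{prouc} with $\phi$ replaced by $\varepsilon\,\mathrm{Id}$). The non-degeneracy assumption on $(f,\phi)$ — which on $[0,u_c]=[0,u_{\max}]$ says that $\lambda\mapsto\sum_i\xi_i f_i(\lambda)$ is affine on no non-degenerate subinterval — yields, through velocity-averaging compactness, strong $L^1(\Omega)$ convergence of a subsequence of $(v_\varepsilon)_\varepsilon$ to an entropy solution $v$ of $(S)$ with $0\le v\le u_{\max}$. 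By the strong-trace theorem for entropy solutions of non-degenerate conservation laws (Panov, Vasseur; as used in \cite{BFK1,PAN}), the normal flux $f(v)\cdot\eta$ admits a strong $L^1$ trace on $\partial\Omega$, and the boundary entropy conditions inherited from the $v_\varepsilon$ force this trace to vanish; hence $(v,g-v)\in A_f$. Rescaling $f$ by $\lambda$ then gives $R(I+\lambda A_{f,\phi})=L^1(\Omega;[0,u_{\max}])$ for every $\lambda>0$ (note that functions in $L^1(\Omega;[0,u_{\max}])$ are automatically bounded since $\Omega$ is bounded, so no further approximation of $g$ is needed).

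For (ii), given data $g_1,g_2$ with corresponding entropy solutions $v_1,v_2$ of $(S)$ having zero normal-flux traces, I would run Kruzhkov's doubling of variables on $\Omega\times\Omega$. Because both normal fluxes possess strong $L^1$ traces equal to zero, the boundary contributions generated by the doubling reduce to terms of the $|f(k)\cdot\eta|$ type that are handled exactly as in the entropy formulation \eqref{ESzeroflux} and in \cite{BG}, and they carry the favourable sign; this yields $\|v_1-v_2\|_{L^1(\Omega)}\le\|g_1-g_2\|_{L^1(\Omega)}$, i.e. $A_{f,\phi}$ is accretive in $L^1$. Together with the range condition, $A_{f,\phi}$ is m-accretive. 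Finally, for (iii): $D(A_f)$ contains $J_\lambda g=(I+\lambda A_f)^{-1}g$ for all $g\in L^1(\Omega;[0,u_{\max}])$ and $\lambda>0$, the resolvents are order-preserving $L^1$-contractions with $0\le J_\lambda g\le u_{\max}$, and $J_\lambda g\to g$ in $L^1(\Omega)$ as $\lambda\downarrow 0$ for $g$ in the dense class of Lipschitz functions valued in $[0,u_{\max}]$; extending by the contraction property, $\overline{D(A_{f,\phi})}^{\,L^1}=L^1(\Omega;[0,u_{\max}])$.

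The main obstacle is step (i), specifically the existence of the strong $L^1$ trace of $f(v)\cdot\eta$ for the stationary entropy solution and the verification that it equals zero: this is exactly where the non-degeneracy of $f$ is indispensable, through the kinetic/velocity-averaging machinery. The vanishing-viscosity passage, the $L^1$ contraction by doubling of variables, and the density of the domain are then routine nonlinear-semigroup arguments for which \cite{BFK1,PAN,BF,BG} can be invoked directly.
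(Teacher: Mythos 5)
Your proposal is correct and takes essentially the same route as the paper: the paper's entire proof of Proposition~\ref{proumax} is a citation of \cite{BFK1,VAS,PAN1} for the existence of strong traces of $f(u)$ for pure conservation laws, and your three steps (vanishing viscosity with velocity-averaging compactness under the non-degeneracy of $f$ on $[0,u_c]=[0,u_{\max}]$, the Vasseur--Panov strong-trace theorem combined with the boundary entropy inequality to force the zero normal-flux trace, then $L^1$-contraction by symmetric doubling of variables and the standard density argument) are exactly the content of those references together with routine nonlinear-semigroup facts. Nothing to object to.
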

For the proof, we refer to \cite{BFK1,VAS,PAN1} where the existence of strong trace of $f(u)$ has been proved for pure conservation laws.\\
In the sequel, we concentrate on the proof of Theorem \ref{unicite} in the case \eqref{dimension} holds. The other cases are similar, using the hint of \cite{BF}\footnote{Notice that the $\mathcal{C}^2$ regularity assumption on $\partial\Omega$, made in \cite{BF} is easily bypassed using the variant presented in \cite[Section 4]{BI}}.
\begin{proof}[Proof of Theorem ~\ref{unicite}]
Now, we apply the doubling of variables \cite{KRU} in the way of \cite{BF, BG}. We consider $\mu=\mu(t,x,\alpha)$ an entropy-process solution of $(P)$ and $v=v(y)$  an entropy solution of $(S)$ using in the definition of $A_{f,\phi}$. Consider nonnegative function $\xi=\xi(t,x,y)$ having the property that $\xi(.,.,y)\in \mathcal{C}^\infty([0,T)\times\overline{\Omega})$ for each $y\in\overline{\Omega}$, $\xi(t,x,.)\in \mathcal{C}_0^\infty(\overline{\Omega})$ for each $(t,x)\in[0,T)\times\overline{\Omega}$. Let us denote the sets on which the diffusion term for the first, respectively for the second solutions degenerate by
 $$\Omega_x=\displaystyle\left\{x\in\Omega ; \mu(t,x,\alpha)\in [0,u_c]\displaystyle\right\}; \Omega_y=\displaystyle\left\{y\in\Omega; v(y)\in [0,u_c]\displaystyle\right\}.$$ We denote by  $\Omega_x^c$ respectively $\Omega_y^c$ their complementaries in $\Omega$.
In \eqref{procarproc}, take $\xi=\xi(t,x,y)$, $k=u(y)$, $D=\phi(u)_y$ and  integrate over $\Omega_y^c\times[0,1]$. We get
\begin{align}\label{u10}
&\int_{\Omega_y^c}\int_0^T\!\!\!\int_{x\in\Omega}\displaystyle\int_0^1\displaystyle\left\{|\mu-v|\xi_t+sign(\mu-v)\Bigl[f(\mu)-f(v)].\xi_x\displaystyle\right\}d\alpha dxdtdy\nonumber\\
-&\int_{\Omega_y^c}\int_0^T\!\!\!\int_{x\in\Omega}sign(u-v)\biggl(\phi(u)_x-\phi(v)_y\biggr).\xi_xdxdtdy\nonumber\\&+\displaystyle\int_{\Omega_y^c}\int_0^T\!\!\!\int_{x\in\partial\Omega}\left|(f(v)-\phi(v)_y).\eta(x)\right|\xi d\sigma dtdy+\displaystyle\int_{\Omega_y^c}\int_{x\in\Omega}|u_0-v|\xi(0,x,y)dxdy\nonumber\\&
\geq\mathop{\overline{\lim}}\limits_{\sigma\rightarrow0}\frac{1}{\sigma}\displaystyle\int_{\Omega_y^c}\int_0^T\!\!\!\int_{x\in\Omega\cap\left\{-\sigma<\phi(u)-\phi(v)<\sigma\right\}}\phi(u)_x(\phi(u)_x-\phi(v)_y)\xi dxdtdy.
\end{align}
In the same way, in \eqref{entrprosol} take $\xi=\xi(t,x,y)$, $k=v(y)$,  integrate over $\Omega_y$, and use the fact that $\phi(v)_y=0$ in $\Omega_y$. We get
\begin{align}\label{u20}
&\int_{\Omega_y}\int_0^T\!\!\!\int_{x\in\Omega}\displaystyle\int_0^1\left\{|\mu-v|\xi_t+sign(\mu-v)\Bigl[f(\mu)-f(v)\Bigr].\xi_x\right\}d\alpha dxdtdy\nonumber\\
-&\int_{\Omega_y}\int_0^T\!\!\!\int_{x\in\Omega}sign(u-v)\biggl(\phi(u)_x-\phi(v)_y\biggr).\xi_xdxdtdy\nonumber\\&+
\displaystyle\int_{\Omega_y}\int_0^T\!\!\!\int_{x\in\partial\Omega}\left|(f(v)-\phi(v)_y).\eta(x)\right|\xi d\sigma dtdy\nonumber\\&+\displaystyle\int_{\Omega_y}\!\int_{x\in\Omega}|u_0-v|\xi(0,x,y)dxdy\geq0.
\end{align}
Since $\Omega=\Omega_x\cup{\Omega_x^c}$, by adding \eqref{u10} to \eqref{u20}  we obtain:
\begin{align}\label{u30}
&\int_{y\in\Omega}\int_0^T\!\!\!\int_{x\in\Omega}\displaystyle\int_0^1\displaystyle\left\{|\mu-v|\xi_t+sign(\mu-v)\Bigl[f(\mu)-f(v)].\xi_x\displaystyle\right\}d\alpha dxdtdy\nonumber\\
-&\int_{y\in\Omega}\int_0^T\!\!\!\int_{x\in\Omega}sign(u-v)\biggl(\phi(u)_x-\phi(v)_y\biggr).\xi_xdxdtdy\nonumber\\&+\displaystyle\int_{\Omega}\int_0^T\!\!\!\int_{x\in\partial\Omega}\left|(f(v)-\phi(v)_y).\eta(x)\right|\xi d\sigma dtdy+\displaystyle\int_{\Omega}\int_{x\in\Omega}|u_0-v|\xi(0,x,y)dxdy\nonumber\\&
\geq\mathop{\overline{\lim}}\limits_{\sigma\rightarrow0}\frac{1}{\sigma}\displaystyle\int_{\Omega_y^c}\int_0^T\!\!\!\int_{x\in\Omega\cap\left\{-\sigma<\phi(u)-\phi(v)<\sigma\right\}}\phi(u)_x(\phi(u)_x-\phi(v)_y)\xi dxdtdy.
\end{align}
In the entropy formulation of $(S)$, take $\xi=\xi(t,x,y)$, $k=\mu(t,x,\alpha)$, $D=\phi(\mu)_x$ and integrate over $(t,x,\alpha)\in(0,T)\times{\Omega_x^c}\times(0,1)$
\begin{align}\label{u40}
&\int_{\Omega_x^c}\int_0^T\!\!\!\displaystyle\int_0^1\displaystyle\int_{y\in\Omega} sign(v-\mu)\Bigl[f(v)-f(\mu)].\xi_yd\alpha dxdtdy\nonumber\\
-&\int_{\Omega_x^c}\int_0^T\!\!\!\displaystyle\int_0^1\int_{y\in\Omega}sign(v-\mu)\biggl(\phi(v)_y-\phi(u)_x\biggr).\xi_yd\alpha dxdtdy\nonumber\\+&
\int_0^T\int_{\Omega_x^c} \displaystyle\int_0^1\int_{y\in\Omega} sign(v-\mu)(v-g(y))\xi d\alpha dxdtdy\nonumber\\&+\displaystyle\int_0^T\int_{\Omega_x^c}\displaystyle\int_0^1\int_{y\in\partial\Omega}\left|(f(\mu)-\phi(\mu)_x).\eta(y)\right|\xi d\alpha d\sigma dxdt\nonumber\\&\geq \mathop{\overline{\lim}}\limits_{\sigma\rightarrow0}\frac{1}{\sigma}\displaystyle\int_{\Omega_x^c}\int_0^T\int_{y\in\cap\left\{-\sigma<\phi(u)-\phi(v)<\sigma\right\}}\displaystyle\int_0^1\phi(v)_y(\phi(v)_y-\phi(\mu)_x)\xi d\alpha dydtdx.
\end{align}
Since $v(y)$ is entropy solution, then  take in the entropy dissipative formulation of (S) $\xi=\xi(t,x,y)$, $k=\mu(t,x,\alpha)\in]u_c,u_{\max}[$, integrate over $(0,T)\times\Omega_x\times(0,1)$ and use the fact that $\phi(\mu)_x=\phi(u)_x=0$ in $(0,T)\times\Omega_x$.
\begin{align}\label{u50}
&\int_{\Omega_x}\int_0^T\!\!\!\displaystyle\int_0^1\displaystyle\int_{y\in\Omega} sign(v-\mu)\Bigl[f(v)-f(\mu)].\xi_yd\alpha dxdtdy\nonumber\\
-&\int_{\Omega_x}\int_0^T\!\!\!\displaystyle\int_0^1\int_{y\in\Omega}sign(v-\mu)\biggl(\phi(v)_y-\phi(u)_x\biggr).\xi_yd\alpha dxdtdy\nonumber\\+&
\int_0^T\int_{\Omega_x} \displaystyle\int_0^1\int_{y\in\Omega} sign(v-\mu)(v-g(y))\xi d\alpha dxdtdy\nonumber\\&+\displaystyle\int_0^T\int_{\Omega_x}\displaystyle\int_0^1\int_{y\in\partial\Omega}\left|(f(\mu)-\phi(\mu)_x).\eta(y)\right|\xi d\alpha d\sigma dxdt\geq0
\end{align}
By adding \eqref{u40} to \eqref{u50}, we obtain
\begin{align}\label{u60}
&\int_{\Omega}\int_0^T\!\!\!\displaystyle\int_0^1\displaystyle\int_{y\in\Omega} sign(v-\mu)\Bigl[f(v)-f(\mu)].\xi_yd\alpha dxdtdy\nonumber\\
-&\int_{\Omega}\int_0^T\!\!\!\displaystyle\int_0^1\int_{y\in\Omega}sign(v-\mu)\biggl(\phi(v)_y-\phi(u)_x\biggr).\xi_yd\alpha dxdtdy\nonumber\\+&
\int_0^T\int_{\Omega} \displaystyle\int_0^1\int_{y\in\Omega} sign(v-\mu)(v-g(y))\xi d\alpha dxdtdy\nonumber\\&+\displaystyle\int_0^T\int_{\Omega}\displaystyle\int_0^1\int_{y\in\partial\Omega}\left|(f(\mu)-\phi(\mu)_x).\eta(y)\right|\xi d\alpha d\sigma dxdt\nonumber\\&\geq \mathop{\overline{\lim}}\limits_{\sigma\rightarrow0}\frac{1}{\sigma}\displaystyle\int_{\Omega_x^c}\int_0^T\int_{y\in\cap\left\{-\sigma<\phi(u)-\phi(v)<\sigma\right\}}\displaystyle\int_0^1\phi(v)_y(\phi(v)_y-\phi(\mu)_x)\xi d\alpha dydtdx.
\end{align}
Now, sum  \eqref{u30} and \eqref{u60} to obtain
\begin{align}\label{u70}
&\displaystyle\int_0^1\int_0^T\!\!\int_\Omega\!\int_\Omega |\mu-v|\xi_td\alpha dydxdt+\displaystyle\int_\Omega\int_\Omega|u_0-v|\xi(0,x,y)dxdy\nonumber\\&+\int_0^1\int_0^T\!\!\!\int_\Omega\!\int_\Omega sign(v-u)\Bigl[f(\mu)-f(v)\Bigr].(\xi_x+\xi_y)d\alpha dydxdt\nonumber\\
-&\int_0^T\!\!\!\displaystyle\int_0^1\int_{\Omega}\int_{\Omega}sign(u-v)\biggl(\phi(u)_x-\phi(v)_y\biggr).(\xi_x+\xi_y)d\alpha dydxdt\nonumber\\&+\displaystyle\int_0^T\int_{x\in\partial\Omega}\!\int_\Omega \left|(f(v)-\phi(v)_y).\eta(x)\right|\xi d\sigma dtdy\nonumber\\&+\int_0^1\int_0^T\!\!\!\int_\Omega\!\int_{y\in\partial\Omega}\left|(f(\mu)-\phi(\mu)_x).\eta(y)\right|\xi dyd\sigma dt\nonumber\\&+\int_0^1\int_0^T\!\!\!\int_\Omega\!\int_\Omega sign(v-\mu)(v-g(y))\xi dydxdtd\alpha \nonumber\\&\geq\mathop{\overline{\lim}}\limits_{\sigma\rightarrow0}\frac{1}{\sigma}\displaystyle\int_0^T\int\!\!\!\int_{{{\Omega_x^c}\times{\Omega_y^c}}\cap\left\{-\sigma<\phi(v)-\phi(u)<\sigma\right\}}|\phi(v)_x-\phi(u)_y|^2\xi dydxdt\geq0.
\end{align}
Next, following the idea of \cite{BF} in the simple one-dimensional setting, we consider the test function $\xi(t,x,y)\!=\!\theta(t)\rho_n(x,y)$, where $\theta\!\in\!\mathcal{C}_0^\infty(0,T)$, $\theta\!\geq\!0$, $\rho_n(x,y)\!=\!\delta_n(\Delta)$ and  $\Delta\!=\!(1-\frac{1}{n(b-a)})x-y+\frac{a+b}{2n(b-a)}$. Then, $\rho_n\in \mathcal{D}(\overline\Omega\times\overline\Omega)$ and $\rho_{{n}_{|_{\Omega\times\partial\Omega}}}(x,y)=0$. Due to this choice $$\displaystyle\int_0^T\!\!\!\int_{x\in\Omega}\int_{y\in\partial\Omega}\int_0^1\left|(f(\mu)-\phi(u)_x).\eta(y)\right|\rho_n\theta dyd\sigma dt=0.$$
By the Proposition  \ref{uc-umax} and the  definition of $A_{f,\phi}$, we prove that for the stationary problem, $(f(v)-\phi(v)_y)\in \mathcal{C}_0([a,b])$. Therefore we have\\ $\left|(f(v)-\phi(v)_y).\eta(x)\right|\longrightarrow 0$ when $x\rightarrow y$, i.e, as $n\longrightarrow\infty$. We conclude that $$\lim_{n\rightarrow\infty}\displaystyle\int_0^T\!\!\!\int_{x\in\partial\Omega}\int_{y\in\Omega}\left|(f(v)-\phi(v)_y).\eta(x)\right|\rho_n\theta dyd\sigma dt=0.$$
It remains to study the limit, as $n\rightarrow\infty$
\begin{equation*}
I_n=\displaystyle\int_0^1\int_0^T\!\!\int_\Omega\!\int_\Omega \theta sign(\mu-v)\Bigl[(f(\mu)-\phi(u)_x)-(f(v)-\phi(v)_y)\Bigr].\bigl((\rho_n)_x+(\rho_n)_y\bigr) dydxdt.
\end{equation*}
We use the change of variable $(x,y)\mapsto (x,z)$ with $z=n(x-y)-\frac{1}{b-a}x+\frac{a+b}{b-a},$
%\begin{eqnarray*}
 \begin{multline}
 I_n%&
 =\frac{2}{b-a}\displaystyle\!\int_0^1\int_{-1}^1\!\int_0^T\!\!\!\int_\Omega \!sign(\mu-v)\Bigl[(f(\mu)-\phi(u)_x)-(f(v)-\phi(v)_y)\Bigr].\delta_n^{\prime}(z)\theta dxdtdzd\alpha\\
   %&
   =\frac{2}{b-a}\displaystyle\int_0^1\int_{-1}^1\int_0^T\!\!\!\int_a^b sign(\mu(t,x,\alpha)-v_n(x,z)) \hspace{4pt}  \\
      \Bigl[p(t,x,\alpha)-q_n(x,z,\alpha)\Bigr]\delta_n^{\prime}(z)\theta(t)dxdtdzd\alpha,
\end{multline}
where $v_n(x,z):=v(y)$, $p(t,x,\alpha):=f(\mu)-\phi(u)_x$ and $q_n:=f(v)-\phi(v)_y$.
For $z$ given, $v_n(.,z)$ converges to $v(.)$ in $L^1$ and $q_n(.,z)$ converges to $q(.):=f(v)-\phi(v)_x$ in $L^1$.  We deduce that for all $z\in[-1,1]$ (see  \cite{BF} also \cite{BG})
%We deduce that for
\begin{align*}
 K_n(z,\alpha):=\displaystyle\int_0^1\!\!\!\displaystyle\int_{Q}sign(w_n(t,x,z))h_n(t,x,\alpha)dxdtd\alpha\displaystyle\longrightarrow_{n\rightarrow\infty}&\displaystyle\int_0^1\!\!\!\displaystyle\int_Q sign(w)hdxdt\\&=:K=\mbox{const},
\end{align*}
where $w_n:=\mu-v_n$, $h_n:=p-q_n$ and $h:=p-q$.
Then $K_n(.)$ converges to $K$ independently on $z$. Moreover, from the definition of $K_n$ one finds easily the uniform $L^\infty$ bound $|K_n|\leq 2(||p||_{L^1(Q)}+T||q||_{L^1(\Omega)})$, for $n$ large enough. Hence by the Lebesgue theorem,
\begin{equation*}
\lim_{n\rightarrow\infty}\int_{-1}^1 K_n(z)\delta'(z)=K\int_{-1}^1\delta'(z)=0.
\end{equation*}
We have shown that the limit of $I_n$ equals zero. The passage to the limit in other terms in \eqref{u70} is straightforward. Finally \eqref{u70} gives for $n\longrightarrow\infty$
\begin{equation*}
\int_0^1\int_0^T\!\!\!\int_\Omega|\mu(t,x,\alpha)-v(y)|\theta'(t)dxdtd\alpha+\int_0^1\int_0^T\!\!\!\int_\Omega sign(v-\mu)(v-g)\theta dxdtd\alpha\geq 0.
\end{equation*}
Hence
\begin{equation*}
\frac{d}{dt}\int_0^1||\mu(t,\alpha)-v||_{L^1(\Omega)}d\alpha\leq\displaystyle\int_0^1\int_\Omega sign(\mu-v)(v-g)dx\mbox{ in } \mathcal{D}'(0,T) .
\end{equation*}
Thus, $v$ is an  integral-process solution of $(E)$ with $A=A_{f,\phi}$. 
\end{proof}
%Now, the claim of Theorem \ref{unicite} is a direct consequence of the fact that the integral-process solution is a unique integral solution (see Apendix 1) and then is an entropy solution of $(P)$.
%%%%%%%%%%%%%%%%%%%%%%%%%%%%%%%%%%%%%%%%%%%%%%%%%%%%%%%%%%%%%%%%%%%%%%%%%%%%%%%%%%%%%%%%%%%%%%%%%%%%%%%%%%%%%%%%%%%%%%%%%%%%%%%%%%%%%%%%%%%%%%%%%%%%%%%%%%%%%%%%%
%\section{Conclusion}
\subsection*{Acknowledgment}
I  would like to thank Boris Andreianov and Petra Wittbold for their through readings and helpful remarks which helped me improve this paper.
%Many thanks to our \TeX-pert for developing this class file.
%%%%%%%%%%%%%%%%%%%%%%%%%%%%%%%%%%%%%%%%%%%%%%%%%%%%%%%%%%%%%%%%%%%%%%%%%%%%%%%%%%%%%%%%%%%%%%%%%%%%%%%%%%%%%%%%%%%%%%%%%%%%%%%%%%%%%%%%%%%%%%%%%%%%%%%%%%%%%%%%%

% ------------------------------------------------------------------------
\end{document}